\newtheorem{theorem}{Theorem}[section]
\newtheorem{lemma}[theorem]{Lemma}
\newtheorem{corollary}[theorem]{Corollary}
\newtheorem{conjecture}[theorem]{Conjecture}
\newtheorem*{question*}{Question}
\theoremstyle{definition}
\newtheorem{definition}[theorem]{Definition}
\newtheorem{problem}[theorem]{Problem}
\newtheorem{question}[theorem]{Question}
\newtheorem*{definition*}{Definition}
\newtheorem{example}[theorem]{Example}
\theoremstyle{remark}
\newcommand{\abs}[1]{\left\lvert#1\right\rvert}
\newcommand{\wt}{\widetilde}
\newcommand{\ol}{\overline}
\DeclareMathOperator{\Hom}{Hom}
\DeclareMathOperator{\area}{area}
\DeclareMathOperator{\vol}{vol}
\newcommand{\ind}{\mathrm{ind}}
\newcommand{\bad}{\mathrm{bad}}
\newcommand{\bst}{\mathrm{bst}}
\newcommand{\x}{\times}
\newcommand{\EE}{\mathbb{E}}
\newcommand{\RR}{\mathbb{R}}
\newcommand{\PP}{\mathbb{P}}
\newcommand{\cF}{\mathcal{F}}
\newcommand{\cG}{\mathcal{G}}
\newcommand{\bx}{\mathbf{x}}
\tikzstyle{B}=[draw,circle, fill=black, minimum size=6pt,inner sep=0pt]
\tikzstyle{W}=[draw,circle, fill=white, minimum size=6pt,inner sep=0pt]
\tikzstyle{P} = [draw, circle, black, fill, inner sep = 0pt, minimum width = 3pt]
\tikzstyle{every loop} = []
\tikzstyle{c1}=[shape=rectangle, minimum size=10pt, fill=red]
\tikzstyle{c2}=[shape=diamond, minimum size=8pt, fill=blue]
\tikzstyle{c3}=[shape=circle, minimum size=8pt, fill=green]
\newcommand{\tikzHind}{
  \begin{tikzpicture}[baseline, yshift=1pt]
    \path[use as bounding box] (-.15,-.1) rectangle (.6,.35);
    \draw (0.5,0) node[P] {} -- (0,0) node[P] {} edge[-,in = 45, out = 135, loop] ();
  \end{tikzpicture}
}
\newcommand{\tikzHwr}{
  \begin{tikzpicture}[baseline,yshift=1pt]
    \path[use as bounding box] (-.15,-0.1) rectangle (0.95,.35);
    \draw (0,0) node[P] {} edge[-,in = 45, out = 135, loop] () 
      -- (.4,0) node[P] {} edge[-,in = 45, out = 135, loop] ()
      -- (.8,0) node[P] {} edge[-,in = 45, out = 135, loop] ();
  \end{tikzpicture}
}
\newcommand{\tikzHwrdown}{
  \begin{tikzpicture}[baseline,yshift=5pt]
    \path[use as bounding box] (-.15,-0.1) rectangle (0.95,.35);
    \node[P] (a) at (0,0) {};
    \node[P] (b) at (.4,0) {};
    \node[P] (c) at (.8,0) {};
    \node[P] (d) at (.4,-.3) {};
    \draw (a) edge[-,in = 45, out = 135, loop] () 
       -- (b) edge[-,in = 45, out = 135, loop] ()
       -- (c) edge[-,in = 45, out = 135, loop] ()
          (b) -- (d);
  \end{tikzpicture}
}
\newcommand{\tikzHtwoloops}{
  \begin{tikzpicture}[baseline,yshift=1pt]
    \path[use as bounding box] (-.15,-0.1) rectangle (0.55,.35);
    \draw (0,0) node[P] {} edge[-,in = 45, out = 135, loop] () 
         (.4,0) node[P] {} edge[-,in = 45, out = 135, loop] ();
  \end{tikzpicture}
}
\newcommand{\tikzKtwo}{
  \begin{tikzpicture}[baseline,yshift=0.5ex]
    \path[use as bounding box] (-.15,-.1) rectangle (.6,.1);
    \draw (0.5,0) node[P] {} -- (0,0) node[P] {};
  \end{tikzpicture}
}
\newcommand{\tikzHindsquared}{
  \begin{tikzpicture}[baseline,yshift=-.5ex]
    \node[P] (a) at (0,0) {};
    \node[P] (b) at (.4,0) {};
    \node[P] (c) at (0,.4) {};
    \node[P] (d) at (.4,.4) {};
    \draw (a) edge[-,in = 135, out = 225, loop] () 
       -- (b)--(c)--(a)--(d);
  \end{tikzpicture}
}
\title[Extremal regular graphs]{Extremal regular graphs: \\ independent sets and graph homomorphisms}
\author{Yufei Zhao}
\address{Mathematical Institute, Oxford OX2 6GG, United Kingdom}
\email{yufei.zhao@maths.ox.ac.uk}
\thanks{The author is supported by an Esm\'ee Fairbairn 
Junior Research Fellowship at New College, Oxford.}
\begin{document}

\begin{abstract}
This survey concerns regular graphs that are extremal with respect to the number of independent sets, and more generally, graph homomorphisms. More precisely, in the family of of $d$-regular graphs, which graph $G$ maximizes/minimizes the quantity $i(G)^{1/v(G)}$, the number of independent sets in $G$ normalized exponentially by the size of $G$? What if $i(G)$ is replaced by some other graph parameter?
We review existing techniques, highlight some exciting recent developments, and discuss open problems and conjectures for future research.
\end{abstract}

\maketitle

\section{Independent sets} \label{sec:ind}

An \emph{independent set} in a graph is a subset of vertices with no two adjacent. Many combinatorial problems can be reformulated in terms of independent sets by setting up a graph where edges represent forbidden relations. 

A graph is \emph{$d$-regular} if all vertices have degree $d$. In the family of $d$-regular graphs of the same size, which graph has the most number of independent sets? This question was initially raised by Andrew Granville in connection to combinatorial number theory, and appeared first in print in a paper by Alon~\cite{Alon91}, who speculated that, at least when $n$ is divisible by $2d$, the maximum is attained by a disjoint union of complete bipartite graphs $K_{d,d}$. Some ten years later, Kahn~\cite{Kahn01} arrived at the same conjecture while studying a problem arising from statistical physics. Using a beautiful entropy argument, Kahn proved the conjecture under the additional assumption that the graph is already bipartite.

\begin{figure}[t]
\centering
\begin{tikzpicture}[scale=.6]
\begin{scope}
	\draw  (0,0) node[W] {} -- 
	       (1,0) node[W] {} -- 
	       (1,1) node[W]{} --
	       (0,1) node[W]{} -- cycle;
\end{scope}

\begin{scope}[shift={(2,0)}]
	\draw  (0,0) node[B] {} -- 
	       (1,0) node[W] {} -- 
	       (1,1) node[W]{} --
	       (0,1) node[W]{} -- cycle;
\end{scope}

\begin{scope}[shift={(4,0)}]
	\draw  (0,0) node[W] {} -- 
	       (1,0) node[B] {} -- 
	       (1,1) node[W]{} --
	       (0,1) node[W]{} -- cycle;
\end{scope}

\begin{scope}[shift={(6,0)}]
	\draw  (0,0) node[W] {} -- 
	       (1,0) node[W] {} -- 
	       (1,1) node[B]{} --
	       (0,1) node[W]{} -- cycle;
\end{scope}

\begin{scope}[shift={(8,0)}]
	\draw  (0,0) node[W] {} -- 
	       (1,0) node[W] {} -- 
	       (1,1) node[W]{} --
	       (0,1) node[B]{} -- cycle;
\end{scope}

\begin{scope}[shift={(10,0)}]
	\draw  (0,0) node[B] {} -- 
	       (1,0) node[W] {} -- 
	       (1,1) node[B]{} --
	       (0,1) node[W]{} -- cycle;
\end{scope}

\begin{scope}[shift={(12,0)}]
	\draw  (0,0) node[W] {} -- 
	       (1,0) node[B] {} -- 
	       (1,1) node[W] {} --
	       (0,1) node[B] {} -- cycle;
\end{scope}
\end{tikzpicture}
\caption{The independent sets of a 4-cycle: $i(C_4) =7$.} \label{fig:indcount}
\end{figure}

We write $I(G)$ to denote the set of independent sets in $G$, and $i(G) := |I(G)|$ the number of independent sets in $G$. See Figure~\ref{fig:indcount}.

\begin{theorem}[Kahn~\cite{Kahn01}] \label{thm:kahn}
  If $G$ is a bipartite $n$-vertex $d$-regular graph, then
  \[
    i(G) \le i(K_{d,d})^{n/(2d)} = (2^{d+1} - 1)^{n/(2d)}.
  \]
\end{theorem}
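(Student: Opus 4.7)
The plan is to prove Theorem~\ref{thm:kahn} via an entropy argument. Let $G = (A \sqcup B, E)$ be bipartite $n$-vertex $d$-regular, so $|A| = |B| = n/2$. Sample $\bY$ uniformly from $I(G)$; since $\bY$ is uniform, $H(\bY) = \log_2 i(G)$, and the goal reduces to proving $H(\bY) \le \tfrac{n}{2d}\log_2(2^{d+1} - 1)$. Write $X_A = \bY \cap A$ and $X_B = \bY \cap B$, and for each $b \in B$ write $X_{N(b)} = \bY \cap N(b)$ and $X_b = \one[b \in \bY]$. The starting point is the chain rule $H(\bY) = H(X_A) + H(X_B \mid X_A)$, from which I would estimate the two terms separately so that their combination produces the $K_{d,d}$ constant as a per-vertex contribution indexed by $B$.

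For $H(X_A)$, I would invoke Shearer's entropy inequality with the cover $\{N(b) : b \in B\}$. Since $G$ is $d$-regular bipartite, each $a \in A$ lies in exactly $d$ of these neighborhoods, so Shearer gives $H(X_A) \le \tfrac{1}{d}\sum_{b \in B} H(X_{N(b)})$. For $H(X_B \mid X_A)$, the crucial structural observation is that conditional on $X_A$ the bits $\{X_b\}_{b \in B}$ are mutually independent: each $X_b$ is deterministically $0$ when $N(b) \cap X_A \ne \emptyset$ and is uniform on $\{0,1\}$ otherwise. Hence $H(X_B \mid X_A) = \sum_{b \in B} H(X_b \mid X_A) = \sum_{b \in B} H(X_b \mid X_{N(b)})$, the last equality because $X_b$ depends on $X_A$ only through $X_{N(b)}$. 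Combining the two bounds yields
\[
H(\bY) \le \sum_{b \in B}\left(\tfrac{1}{d} H(X_{N(b)}) + H(X_b \mid X_{N(b)})\right).
\]

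The main obstacle is a sharp per-vertex inequality: I must show that each summand on the right is at most $\tfrac{1}{d}\log_2(2^{d+1} - 1)$. Set $p_b = \Pr(X_{N(b)} = \emptyset)$. Splitting $H(X_{N(b)})$ according to whether $X_{N(b)}$ is empty and using the trivial bound on the nonempty case gives $H(X_{N(b)}) \le H_2(p_b) + (1-p_b)\log_2(2^d - 1)$, where $H_2$ denotes binary entropy. Since $X_b$ is forced to $0$ unless $X_{N(b)} = \emptyset$, I also have $H(X_b \mid X_{N(b)}) \le p_b$. The resulting upper bound on the summand is a concave function of the single parameter $p_b$; a one-variable optimisation locates its maximum at $p_b^\ast = 2^d/(2^{d+1}-1)$, which is precisely the probability that the opposite part is empty in a uniform independent set of $K_{d,d}$, and a direct calculation confirms that the maximum value equals exactly $\tfrac{1}{d}\log_2(2^{d+1} - 1)$. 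Summing over the $n/2$ vertices of $B$ then completes the proof. I expect the per-vertex optimisation and the identification of the Shearer cover to be the substantive steps, while the chain-rule manipulation and the conditional-independence observation are routine entropy bookkeeping.
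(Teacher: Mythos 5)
Your argument is correct, and every step checks out: Shearer's inequality with the cover $\{N(b):b\in B\}$ (each $a\in A$ covered exactly $d$ times), the conditional independence of the bits $X_b$ given $X_A$ under the uniform measure on $I(G)$, the splits $H(X_{N(b)})\le H_2(p_b)+(1-p_b)\log_2(2^d-1)$ and $H(X_b\mid X_{N(b)})\le p_b$, and the one-variable maximization, whose value is $\max_p\bigl(H_2(p)+dp+(1-p)\log_2(2^d-1)\bigr)=\log_2\bigl(2^d+(2^d-1)\bigr)=\log_2(2^{d+1}-1)$, attained at $p^\ast=2^d/(2^{d+1}-1)$. This is, however, a genuinely different route from the paper's: what you have written is essentially Kahn's original entropy proof, which the survey explicitly declines to reproduce. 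Instead, the paper deduces Theorem~\ref{thm:kahn} as the special case $H=\tikzHind$ (so $\hom(G,H)=i(G)$ and $\hom(K_{d,d},H)=2^{d+1}-1$) of the Galvin--Tetali theorem (Theorem~\ref{thm:GT}), proved via the generalized H\"older/Loomis--Whitney projection inequality (Theorem~\ref{thm:holder-ext}): writing $\hom(G,H)=\sum_{\phi\colon U\to V(H)}\prod_{w\in W}f(\phi(u):u\in N(w))$ with $f$ the common-neighborhood count in $H$, the projection inequality gives $\hom(G,H)\le \lVert f\rVert_d^{|W|}$ and $\lVert f\rVert_d^d=\hom(K_{d,d},H)$. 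Your route is self-contained modulo Shearer and makes the extremal structure visible (the optimal $p^\ast$ is exactly the $K_{d,d}$ statistic), but the sharp constant must be extracted by hand from a per-vertex optimization; the paper's route avoids entropy altogether, generalizes immediately to arbitrary target loop-graphs $H$ (and to weighted and biregular versions), and produces the constant $\hom(K_{d,d},H)^{1/(2d)}$ automatically as an $L^d$ norm. The two methods are closely related---Shearer-type entropy inequalities and these projection inequalities are essentially two faces of the same fact (cf.\ the reference to Friedgut in Section~\ref{sec:proj})---but your decomposition and bookkeeping are those of Kahn's argument, not of the proof given here.
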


I showed that the bipartite requirement in Theorem~\ref{thm:kahn} can be dropped (as conjectured by Kahn).

\begin{theorem} [\cite{Zhao10}] \label{thm:zhao}
  If $G$ is an $n$-vertex $d$-regular graph, then
  \[
    i(G) \le  i(K_{d,d})^{n/(2d)} = (2^{d+1} - 1)^{n/(2d)}.
  \]
\end{theorem}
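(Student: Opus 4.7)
The plan is to reduce to the bipartite case handled by Theorem~\ref{thm:kahn} via the \emph{bipartite double cover}. Given $G$, let $G \times K_2$ be the graph with vertex set $V(G) \times \{0,1\}$ and edges $\{(u,0),(v,1)\}$ for each $uv \in E(G)$. This graph is bipartite, $d$-regular, and has $2n$ vertices, so Theorem~\ref{thm:kahn} yields
\[
  i(G \times K_2) \le (2^{d+1} - 1)^{2n/(2d)} = \bigl((2^{d+1}-1)^{n/(2d)}\bigr)^2.
\]
Theorem~\ref{thm:zhao} therefore reduces to the key inequality $i(G)^2 \le i(G \times K_2)$.

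To prove this key inequality, I first observe that an independent set of $G \times K_2$ corresponds to a pair $(A,B)$ of subsets of $V(G)$ such that no edge of $G$ has one endpoint in $A$ and the other in $B$; write $J(G)$ for the collection of such pairs, so $|J(G)| = i(G \times K_2)$. Both $i(G)^2$ and $|J(G)|$ decompose according to the data $(U,V) = (\text{union}, \text{intersection})$ of the two set arguments. In both cases $V$ must be independent and have no neighbors in $U \setminus V$: for $(I_1,I_2) \in I(G)^2$ because $V \subseteq I_1 \cap I_2$; for $(A,B) \in J(G)$ because any such edge would be between $A$ and $B$. Given an allowed $(U,V)$, the number of $(A,B) \in J(G)$ with that union and intersection equals the number of $2$-colorings of $U \setminus V$ in which each connected component of $G[U \setminus V]$ is monochromatic, namely $2^{c(G[U \setminus V])}$ where $c(\cdot)$ counts connected components. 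On the other hand, the number of $(I_1,I_2) \in I(G)^2$ with that $(U,V)$ is the number of proper $2$-colorings of $G[U \setminus V]$, which is $2^{c(G[U \setminus V])}$ when $G[U \setminus V]$ is bipartite and $0$ otherwise. Summing term-by-term gives $|J(G)| \ge i(G)^2$.

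The main obstacle is to spot the right grouping by union and intersection; once that bookkeeping is set up, the inequality reduces to the observation that a proper $2$-coloring is more restrictive than an arbitrary one. Equality holds precisely when $G[U \setminus V]$ is automatically bipartite for every allowed $(U,V)$, which is the case when $G$ itself is bipartite (so nothing new is gained by the double cover in that case, consistent with Kahn's result). Combining the key inequality with the Kahn bound on $i(G \times K_2)$ and taking square roots yields $i(G) \le (2^{d+1}-1)^{n/(2d)}$.
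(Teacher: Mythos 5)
Your proposal is correct, and its overall skeleton is exactly the paper's: reduce to the bipartite case via the double cover $G \times K_2$, apply Theorem~\ref{thm:kahn}, and prove the key inequality $i(G)^2 \le i(G \times K_2)$ (Lemma~\ref{lem:indep-bip} in the paper). Where you differ is in how that key inequality is proved. The paper constructs an explicit injection $\phi \colon I(2G) \to I(G \times K_2)$: it identifies the ``bad'' edges $uv$ with $u_0, v_1 \in S$, observes they form a bipartite subgraph of $G$, picks a canonical vertex set $A$ separating them (first in a fixed ordering of subsets), and swaps the two copies across $A$; injectivity follows because the bad edges, and hence $A$, can be recovered from the image. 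You instead compare the two counts directly by grouping both sides according to $(U,V) = (\text{union}, \text{intersection})$: for each admissible $(U,V)$ the pairs $(A,B)$ counted by $i(G\times K_2)$ correspond to component-monochromatic $2$-colorings of $G[U\setminus V]$, of which there are $2^{c(G[U\setminus V])}$, while the pairs $(I_1,I_2)$ counted by $i(G)^2$ correspond to proper $2$-colorings, of which there are $2^{c(G[U\setminus V])}$ if $G[U\setminus V]$ is bipartite and $0$ otherwise; I checked the bookkeeping and it is sound. Your counting version is arguably more transparent and gives the equality analysis (bipartite $G$) for free; it is in fact the same dichotomy that appears implicitly in the paper, whose closing remark is that $T$ lies in the image of $\phi$ exactly when its bad-edge graph is bipartite. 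What the paper's injective/swapping formulation buys is flexibility: it is the version that extends to weighted independent sets and to the ``bipartite swapping target'' generalization (Theorem~\ref{thm:bst}), where one swaps pairs of homomorphisms rather than vertices of independent sets.
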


Equality occurs when $n$ is divisible by $2d$ and $G$ is a disjoint union of $K_{d,d}$'s. We do not concern ourselves here with what happens when $n$ is not divisible by $2d$, as the extremal graphs are likely dependent on number theoretic conditions, and we do not know a clean set of examples. Alternatively, the problem can phrased as maximizing $i(G)^{1/v(G)}$ over the set of $d$-regular bipartite graphs $G$, where $v(G)$ denotes the number of vertices of $G$. The above theorem says that this maximum is attained at $G = K_{d,d}$. Note that $i(G)^{1/v(G)}$ remains unchanged if $G$ is replaced by a disjoint union of copies of $G$.

We provide an exposition of the proofs of these two theorems as well as a discussion of subsequent developments. Notably, Davies, Jenssen, Perkins, and Roberts~\cite{DJPR1} recently gave a new proof of the above theorems by introducing a powerful new technique, which has already had a number of surprising new consequences~\cite{DJPR2,DJPR3,PP}. The results have been partially extended to graph homomorphisms, though many intriguing open problems remain. We also discuss some recent work on the subject done by Luke Sernau~\cite{Ser} as an undergraduate student at Notre Dame.

\section{Graph homomorphisms} \label{sec:hom}

Given two graphs $G$ and $H$, a \emph{graph homomorphism} from $G$ to $H$ is a map of vertex sets $\phi \colon V(G) \to V(H)$ that sends every edge of $G$ to an edge of $H$, i.e., $\phi(u)\phi(v) \in E(H)$ whenever $uv \in E(G)$. Here $V(G)$ denotes the vertex set of $G$ and $E(G)$ the edge set. We use lower case letters for cardinalities: $v(G) := |V(G)|$ and $e(G) := |E(G)|$. Let
\[
\Hom(G,H) := \{ \phi \colon V(G) \to V(H) : \phi(u)\phi(v) \in E(H) \ \forall uv \in E(G)\}
\]
denote the set of graph homomorphisms from $G$ to $H$, and $\hom(G,H) := |\Hom(G,H)|$.

We usually use the letter $G$ for the source graph and $H$ for the target graph. It will be useful to allow the target graph $H$ to have loops (but not multiple edges), and we shall refer to such graphs as \emph{loop-graphs}. The source graph $G$ is usually simple (without loops). By \emph{graph} we usually mean a simple graph.

Graph homomorphisms generalize the notion of independent sets. They are equivalent to labeling the vertices of $G$ subject to certain constraints encoded by $H$.

\begin{figure}
\centering
\begin{tikzpicture}[scale=.6, >=latex]
\begin{scope}[shift={(-8,0)}]
\node[P] (1) at (90:1) {};
\node[P] (2) at (162:1) {};
\node[P] (3) at (234:1) {};
\node[P] (4) at (306:1) {};
\node[P] (5) at (378:1) {};
\draw (1)--(2)--(3)--(4)--(5)--(1);
\end{scope}

\begin{scope}[shift={(-2,0)}, font={\footnotesize}]
\node[B] (b) at (1,0) {};
\node[W] (w) at (0,0) {};
\draw (b)--(w) edge[-,in=45,out=135,loop] ();
\end{scope}

\begin{scope}[-latex, shorten <=3pt, shorten >=3pt]
\draw (1) to[bend left] (b);
\draw (2) to[bend right=15] (w);
\draw (3) to[bend right=20] (b);
\draw (4) to[bend right=15] (w);
\draw (5) to[bend left=5] (w);
\end{scope}

\begin{scope}[shift={(3,0)}, font=\footnotesize]
\node[B] (1) at (90:1) {};
\node[W] (2) at (162:1) {};
\node[B] (3) at (234:1) {};
\node[W] (4) at (306:1) {};
\node[W] (5) at (378:1) {};
\draw (1)--(2)--(3)--(4)--(5)--(1);
\end{scope}

\node[inner sep=1em] (label-left) at (-5, -2)  {graph homomorphism }; 
\node[inner sep=1em]  (label-right) at (3,-2) {independent set };
\draw[<->] (label-left) to (label-right);

\end{tikzpicture}
\caption[Graph homomorphisms and independent sets]{Homomorphisms from $G$ to $\tikzHind$ correspond to independent sets of $G$.}
\label{fig:ind}
\end{figure}

\begin{figure}
\centering
\begin{tikzpicture}[scale=.6, >=latex]
\begin{scope}[shift={(-8,0)}]
\node[P] (1) at (90:1) {};
\node[P] (2) at (162:1) {};
\node[P] (3) at (234:1) {};
\node[P] (4) at (306:1) {};
\node[P] (5) at (378:1) {};
\draw (1)--(2)--(3)--(4)--(5)--(1);
\end{scope}

\begin{scope}[shift={(-2,0)}]
\node[c1] (a) at (90:1) {};
\node[c2] (b) at (210:1) {};
\node[c3] (c) at (330:1) {};
\draw (a)--(b)--(c)--(a);
\end{scope}

\begin{scope}[-latex, shorten <=3pt, shorten >=3pt]
\draw (1) to[bend left=5] (b);
\draw (2) to[bend left=5] (a);
\draw (3) to[bend left=10] (b);
\draw (4) to[bend right=15] (c);
\draw (5) to[bend left=5] (a);
\end{scope}

\begin{scope}[shift={(3,0)}]
\node[c2] (1) at (90:1) {};
\node[c1] (2) at (162:1) {};
\node[c2] (3) at (234:1) {};
\node[c3] (4) at (306:1) {};
\node[c1] (5) at (378:1) {};
\draw (1)--(2)--(3)--(4)--(5)--(1);
\end{scope}

\node[inner sep=1em] (label-left) at (-5, -2)  {graph homomorphism}; 
\node[inner sep=1em]  (label-right) at (3,-2) {coloring};
\draw[<->] (label-left) to (label-right);

\end{tikzpicture}
\caption{Homomorphisms from $G$ to $K_q$ correspond to proper colorings of vertices of $G$ with $q$ colors.}
\label{fig:color}
\end{figure}

\begin{figure}
\centering
\begin{tikzpicture}[scale=.6, baseline=.5cm]
    \draw (0,0) grid (4,3);
    \foreach \x in {0,...,4}{
        \foreach \y in {0,...,3}{
            \node[P] (v\x\y) at (\x,\y) {};
        }
    }
    \foreach \c in {01,02,12,42}{
        \node[c1] at (v\c) {};
    }
    \foreach \c in {21,23,33,30,40}{
        \node[c2] at (v\c) {};
    }
\end{tikzpicture}
\caption[A configuration for the Widom--Rowlinson model on a grid.]{A configuration for the Widom--Rowlinson model on a grid, corresponding to a homomorphism to $\tikzHwr$, where vertices of the grid that are mapped to the first vertex in $\tikzHwr$ are marked \tikz[baseline=-3pt]{\node[c1] {};} and those mapped to the third vertex are marked \tikz[baseline=-3pt]{\node[c2] {};}.}
\label{fig:WR}
\end{figure}

\begin{example}[Independent sets] \label{ex:ind}
Homomorphisms from $G$ to $\tikzHind$ correspond bijectively to independent sets in $G$. Indeed, a map of vertices from $G$ to $\tikzHind$ is a homomorphism if and only if the preimage of the non-looped vertex in $\tikzHind$ forms an independent set in $G$. So $\hom(G, \tikzHind) = i(G)$. See Figure~\ref{fig:ind}. In the statistical physics literature\footnote{See \cite{BW99} for the connection between the combinatorics of graph homomorphisms and Gibbs measures in statistical physics.
}, independent sets correspond to \emph{hard-core models}. For example, they can be used to represent configurations of non-overlapping spheres (``hard'' spheres) on a grid.
\end{example}

\begin{example}[Graph colorings] \label{ex:color}
When the target graph is the complete graph $K_q$ on $q$ vertices, a graph homomorphism from $G$ to $K_q$ corresponds to a coloring of the vertices of $G$ with $q$ colors so that no two adjacent vertices of $G$ receive the same color. Such colorings are called \emph{proper $q$-colorings}. See Figure~\ref{fig:color}. Thus $\hom(G,K_q)$ is the number of proper $q$-colorings of $G$. For a fixed $G$, the quantity $\hom(G,K_q)$ is a polynomial function in $q$, and it is called the \emph{chromatic polynomial} of $G$, a classic object in graph theory.
\end{example}

\begin{example}[Widom--Rowlinson model] \label{ex:WR}
A homomorphism from $G$ to $\tikzHwr$ corresponds to a partial coloring of the vertices of $G$ with red or blue, allowing vertices to be left uncolored, such that no red vertex is adjacent to a blue vertex. Such a coloring is known as a \emph{Widom--Rowlinson configuation}. See Figure~\ref{fig:WR}.
\end{example}

As graph homomorphisms generalize independent sets, one may wonder whether theorems in Section~\ref{sec:ind} generalize to graph homomorphisms. There have indeed been some interesting results in this direction, as well as several intriguing open problems.

It turns out, perhaps surprisingly, that Theorem~\ref{thm:kahn}, concerning the number of independent sets in a regular bipartite graph, always extends to graph homomorphisms.

\begin{theorem}[Galvin and Tetali~\cite{GT04}] \label{thm:GT}
  Let $G$ be a bipartite $d$-regular graph and $H$ a loop-graph. Then
  \[
    \hom(G,H)^{1/v(G)} \le \hom(K_{d,d}, H)^{1/(2d)}.
  \]
\end{theorem}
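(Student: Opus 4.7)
The plan is to push Kahn's entropy argument for Theorem~\ref{thm:kahn} through for a general target $H$. Let $\phi$ be uniformly random on $\Hom(G,H)$, so $\log \hom(G,H) = H(\phi)$ (Shannon entropy), and fix the bipartition $V(G) = A \sqcup B$. Write $\phi_S$ for the restriction of $\phi$ to a set $S \subseteq V(G)$. The goal is the per-vertex entropy bound $H(\phi) \le \tfrac{v(G)}{2d} \log \hom(K_{d,d}, H)$.

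First decompose $H(\phi) = H(\phi_A) + H(\phi_B \mid \phi_A)$. For the first term, observe that $\{N(b) : b \in B\}$ is a $d$-fold cover of $A$ (since $G$ is $d$-regular and bipartite), so Shearer's inequality yields $H(\phi_A) \le \tfrac{1}{d}\sum_{b \in B} H(\phi_{N(b)})$. For the second term, because $B$ is an independent set of $G$, the variables $\{\phi_b\}_{b \in B}$ are conditionally independent given $\phi_A$, and each $\phi_b$'s conditional law depends only on $\phi_{N(b)}$; this gives $H(\phi_B \mid \phi_A) = \sum_b H(\phi_b \mid \phi_{N(b)})$. Combining, it suffices to prove the per-vertex inequality
\[
H(\phi_{N(b)}) + d \cdot H(\phi_b \mid \phi_{N(b)}) \le \log \hom(K_{d,d}, H) \quad \text{for each } b \in B,
\]
since summing over the $|B| = v(G)/2$ vertices and dividing by $d$ then delivers the target bound.

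For this key per-vertex step, let $p(\mathbf{s})$ denote the marginal law of $\phi_{N(b)}$ on $V(H)^d$, and let $N(\mathbf{s})$ denote the number of common neighbors in $H$ of $\mathbf{s} = (h_1, \ldots, h_d)$. The uniform-on-support bound $H(\phi_b \mid \phi_{N(b)} = \mathbf{s}) \le \log N(\mathbf{s})$ together with the definition of $H(\phi_{N(b)})$ yields
\[
H(\phi_{N(b)}) + d \cdot H(\phi_b \mid \phi_{N(b)}) \le \sum_{\mathbf{s}} p(\mathbf{s}) \log \frac{N(\mathbf{s})^d}{p(\mathbf{s})} \le \log \sum_{\mathbf{s} \in V(H)^d} N(\mathbf{s})^d,
\]
the last step by Jensen's inequality (or equivalently the log-sum inequality) applied to the concave function $\log$. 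The final identification is that $\sum_{\mathbf{s}} N(\mathbf{s})^d = \hom(K_{d,d}, H)$: a homomorphism $K_{d,d} \to H$ is obtained by freely choosing the images $\mathbf{s}$ of one part in $V(H)^d$ and then choosing each image of the other part independently as an arbitrary common neighbor of $\mathbf{s}$, giving exactly $N(\mathbf{s})^d$ extensions per $\mathbf{s}$.

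I expect the main obstacle to be guessing the precise linear combination $H(\phi_{N(b)}) + d\,H(\phi_b \mid \phi_{N(b)})$: it must (a) fall out cleanly from the Shearer-plus-conditional-independence decomposition and (b) collapse under Jensen to the specific quantity $\log\hom(K_{d,d},H)$. The balancing of the Shearer weight $1/d$ against the multiplicity $d$ arising from averaging $d$ common-neighbor choices is what forces $K_{d,d}$ as the extremal target. Once this form is identified, no structural hypothesis on $H$ is needed, and the remainder is routine entropy manipulation together with the combinatorial identification above.
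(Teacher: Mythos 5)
Your argument is correct, but it is not the route this paper takes: what you have reconstructed is essentially the original entropy proof of Kahn and Galvin--Tetali, which the survey deliberately omits (``we will not cover the entropy arguments''), whereas the proof given here for Theorem~\ref{thm:GT} runs through the generalized H\"older/projection inequality of Theorem~\ref{thm:holder-ext}. In the paper's proof one writes $\hom(G,H)=\sum_{\phi\colon U\to V(H)}\prod_{w\in W} f(\phi(u):u\in N(w))$, where $f(z_1,\dots,z_d)$ counts common neighbors in $H$, and then a chain of Cauchy--Schwarz applications (one variable at a time) bounds this by $\|f\|_d^{|W|}$; your proof instead decomposes $H(\phi)=H(\phi_A)+H(\phi_B\mid\phi_A)$, applies Shearer to the $d$-fold cover $\{N(b)\}_{b\in B}$, uses conditional independence of the $\phi_b$'s given $\phi_A$ (your claimed equality $H(\phi_B\mid\phi_A)=\sum_b H(\phi_b\mid\phi_{N(b)})$ is fine since the conditional law of $\phi_b$ is uniform on the common neighbors of $\phi_{N(b)}$, though only ``$\le$'' is needed), and closes with Jensen. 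The two proofs pivot on exactly the same combinatorial identity, $\sum_{\mathbf{s}\in V(H)^d} N(\mathbf{s})^d=\|f\|_d^d=\hom(K_{d,d},H)$, and indeed the projection inequality is the ``exponentiated'' form of Shearer, as the paper notes via Friedgut's article. What each buys: your entropy version makes transparent where the weight $1/d$ from Shearer balances the $d$ common-neighbor choices, and it is the historically original argument; the paper's H\"older route avoids information-theoretic machinery altogether, is a short self-contained computation from Cauchy--Schwarz, and (as remarked in Section~\ref{sec:related}) extends with little change to weighted homomorphisms and to biregular graphs. One trivial caveat for your write-up: if $\Hom(G,H)=\emptyset$ the uniform measure is undefined, so dispose of that case separately (the inequality is then vacuous).
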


Can the bipartite hypothesis above be dropped as in Theorem~\ref{thm:zhao}? The answer is no. Indeed, with $H=\tikzHtwoloops$ being two disjoint loops, $\hom(G,\tikzHtwoloops) = 2^{c(G)}$ where $c(G)$ is the number of connected components of $G$. In this case, $\hom(G,\tikzHtwoloops)^{1/v(G)}$ is maximized when the sizes of the components of $G$ are as small as possible (among $d$-regular graphs), i.e., when $G = K_{d+1}$.

The central problem of interest for the rest of this article is stated below. It has been solved for certain targets $H$, but it is open in general. The analogous minimization problem is also interesting, and will be discussed in Section~\ref{sec:min}.

\begin{problem} \label{prb:hom-max}
Fix a loop-graph $H$ and a positive integer $d$.
Determine the supremum of $\hom(G,H)^{1/v(G)}$ taken over all $d$-regular graphs $G$.
\end{problem}

We have already seen two cases where Problem~\ref{prb:hom-max} has been solved: when $H = \tikzHind$, the maximum is attained by $G = K_{d,d}$ (Theorem~\ref{thm:zhao}), and when $H = \tikzHtwoloops$, the maximum is attained by $G = K_{d+1}$. The latter example can be extended to $H$ being a disjoint union of complete loop-graphs. Another easy case is $H$ bipartite, as $\hom(G,H) = 0$ unless $G$ is bipartite, so the maximizer is $K_{d,d}$ by Theorem~\ref{thm:GT}.

I extended Theorem~\ref{thm:zhao} to solve Problem~\ref{prb:hom-max} for a certain family of $H$. We define a \emph{loop-threshold graph} to be a loop-graph whose vertices can be ordered so that its adjacency matrix has the property that whenever an entry is $1$, all entries to the left of it and above it are $1$ as well. An example of a loop-threshold graph, along with its adjacency matrix, is shown below.
\[
\begin{tikzpicture}[baseline=(current bounding box.center),font=\footnotesize]
  \node[P,label=left:1] (1) at (0,0) {};
  \node[P,label=left:2] (2) at (0,1) {};
  \node[P,label=right:3] (3) at (1,1) {};
  \node[P,label=right:4] (4) at (1,0) {};
  \node[P,label=above:5] (5) at (2,0.5) {};
  \draw (1)--(2) (1)--(3) (1)--(4);
  \draw (1) edge[-,in=-45,out=-135,loop] ();
  \draw (2) edge[-,in=45,out=135,loop] ();
\end{tikzpicture}
\qquad
\footnotesize
\begin{pmatrix}
1&1&1&1&0\\
1&1&0&0&0\\
1&0&0&0&0\\
1&0&0&0&0\\
0&0&0&0&0
\end{pmatrix}.
\]
Loop-threshold graphs generalize \tikzHind from Example~\ref{ex:ind}. 
The following result was obtained by extending the proof method of Theorem~\ref{thm:zhao}. It answers Problem~\ref{prb:hom-max} when the target $H$ is a loop-threshold graph. 

\begin{theorem}[\cite{Zhao11}] \label{thm:threshold}
Let $G$ be a $d$-regular graph $G$ and $H$ a loop-threshold graph. Then
\[
\hom(G,H)^{1/v(G)} \le \hom(K_{d,d},H)^{1/(2d)}.
\]
\end{theorem}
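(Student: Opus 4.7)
The plan is to use the bipartite double cover trick from Zhao's proof of Theorem~\ref{thm:zhao}, reducing to the bipartite case handled by Theorem~\ref{thm:GT}. Let $\wt G := G \times K_2$ denote the bipartite double cover of $G$, a $d$-regular bipartite graph on $2v(G)$ vertices. Theorem~\ref{thm:GT} applied to $\wt G$ yields $\hom(\wt G, H)^{1/(2v(G))} \le \hom(K_{d,d}, H)^{1/(2d)}$, so it suffices to prove
\[
\hom(\wt G, H) \ge \hom(G, H)^2,
\]
at which point the chain $\hom(G, H)^{1/v(G)} \le \hom(\wt G, H)^{1/(2v(G))} \le \hom(K_{d,d}, H)^{1/(2d)}$ finishes the theorem.

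Fix an ordering of $V(H)$ witnessing the loop-threshold structure, so that $N_H(i) \supseteq N_H(j)$ whenever $i \le j$. Given maps $\phi, \psi \colon V(G) \to V(H)$, write $\alpha(v) := \min(\phi(v), \psi(v))$, $\beta(v) := \max(\phi(v), \psi(v))$, and $S := \set{v : \phi(v) > \psi(v)}$. A case-by-case expansion of $\phi(u) \sim \phi(v)$ and $\psi(u) \sim \psi(v)$ at an edge $uv$ shows that $\phi, \psi$ are both homomorphisms $G \to H$ if and only if, for every edge $uv$, $(\alpha, \beta)$ satisfies the \emph{hom-pair} condition ($\alpha(u) \sim \alpha(v)$ and $\beta(u) \sim \beta(v)$) when $|S \cap \set{u,v}|$ is even, and the \emph{bipartite} condition ($\alpha(u) \sim \beta(v)$ and $\alpha(v) \sim \beta(u)$) when $|S \cap \set{u,v}|$ is odd; an analogous check shows that $(\phi, \psi)$ (viewed as a pair of maps on the two sides of $\wt G$) is a homomorphism $\wt G \to H$ if and only if the \emph{opposite} parity rule holds (bipartite on even edges, hom-pair on odd edges).

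The key lemma---and the main obstacle---is the following consequence of the nested neighborhood structure: \emph{for $a \le b$ and $c \le d$ in $V(H)$, if $a \sim c$ and $b \sim d$, then $a \sim d$ and $b \sim c$.} This is verified by a short case analysis on the order of $\set{a,b,c,d}$; for instance, when $a \le c \le d$ and $b \le d$, one deduces from $b \sim d$ that $d \in N_H(b) \subseteq N_H(a)$ (using $a \le b$), hence $a \sim d$, and similarly for $b \sim c$ and for the remaining cases. Applied to $(\alpha(u), \beta(u), \alpha(v), \beta(v))$, this says that for every sorted $(\alpha, \beta)$ the hom-pair condition at an edge forces the bipartite condition there, so no edge is ``hom-pair only''.

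Consequently, with respect to any fixed sorted $(\alpha, \beta)$, each edge of $G$ is either \emph{good} (both conditions hold), \emph{bipartite-only}, or \emph{bad} (neither holds); let $F$ denote the set of bipartite-only edges. If a bad edge is present then no $S$ works for either side. Otherwise, writing $c(F)$ for the number of connected components of $F$ (treating isolated vertices as singleton components), the $S$ producing a homomorphism $\wt G \to H$ are precisely those constant on each component of $F$, yielding $2^{c(F)}$ choices, while the $S$ producing a pair of homomorphisms $G \to H$ are precisely the proper $2$-colorings of $F$, yielding $2^{c(F)}$ if $F$ is bipartite and $0$ otherwise. Thus the former count is at least the latter, and summing over all sorted $(\alpha, \beta)$ gives $\hom(\wt G, H) \ge \hom(G, H)^2$, as required.
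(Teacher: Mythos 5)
Your proof is correct, and at the top level it follows the same strategy the paper attributes to \cite{Zhao11}: establish $\hom(G,H)^2 \le \hom(G \x K_2, H)$ and then apply Theorem~\ref{thm:GT} to the $d$-regular bipartite double cover. Where you differ is in how that intermediate inequality is proved. The paper's route (Section~\ref{sec:swap}, Theorem~\ref{thm:bst}) is an injection $\Hom(2G,H) \hookrightarrow \Hom(G\x K_2,H)$ in the style of Lemma~\ref{lem:indep-bip}: one shows the ``bad'' edges of a pair of homomorphisms span a bipartite subgraph of $G$ --- this is where the hypothesis enters, via bipartiteness of the auxiliary graph $H^{\bst}$, a condition satisfied by loop-threshold graphs --- and then swaps across a canonically chosen side. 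You instead fix the sorted pair $(\alpha,\beta)$ and directly compare, for each such pair, the number of parity patterns $S$ yielding a homomorphism of $G\x K_2$ with the number yielding a pair of homomorphisms of $G$; the threshold structure enters only through the nested-neighborhood lemma that the hom-pair condition forces the bipartite condition on sorted values, so no edge is ``hom-pair only'' and the comparison is $2^{c(F)}$ against $2^{c(F)}$ or $0$. Your version avoids introducing $H^{\bst}$ and the canonical-choice/injectivity bookkeeping and is arguably cleaner for this target class, but it is tied to the threshold ordering, whereas the swapping argument of \cite{Zhao11} covers the strictly larger class of bipartite swapping targets.

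One point to tighten: the correspondence between pairs $(\phi,\psi)$ and triples $(\alpha,\beta,S)$ is a bijection only if $S$ is restricted to $D := \{v : \alpha(v) \ne \beta(v)\}$, so the number of homomorphisms with a given sorted pair is not literally the number of valid $S \subseteq V(G)$. This is harmless: at a vertex with $\alpha(v)=\beta(v)$ the hom-pair and bipartite conditions on any incident edge coincide, so such an edge is never bipartite-only and every edge of $F$ lies inside $D$; restricting $S$ to $D$ therefore changes both counts by the same rule and the per-pair inequality survives (alternatively, the odd-parity solutions form a coset of the even-parity solutions over $\mathbb{F}_2$, so the even count always dominates). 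With that one sentence added, the argument is complete.
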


In fact, the theorem was proved in \cite{Zhao11} for any $H$ that is a \emph{bipartite swapping target}, a class of loop-graphs that includes the loop-threshold graphs (see Section~\ref{sec:swap}).
Sernau~\cite{Ser} recently extended Theorem~\ref{thm:threshold} to an even larger family of $H$ (see Section~\ref{sec:prod}).

The most interesting open case of Problem \ref{prb:hom-max} is $H = K_q$, concerning the number of proper $q$-colorings of vertices of $G$ (Example~\ref{ex:color}). 

\begin{conjecture} \label{conj:coloring}
For every $d$-regular graph $G$ and integer $q \ge 3$,
\[
\hom(G,K_q)^{1/v(G)} \le \hom(K_{d,d},K_q)^{1/(2d)}.
\]
\end{conjecture}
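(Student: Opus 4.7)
The natural first attempt follows the template of Zhao's reduction from Theorem~\ref{thm:kahn} to Theorem~\ref{thm:zhao} via the bipartite double cover. Given a $d$-regular $G$, form $G \x K_2$, which is $d$-regular, bipartite, and has $2v(G)$ vertices. Applied to $H = K_q$, Theorem~\ref{thm:GT} (Galvin--Tetali) gives
\[
  \hom(G \x K_2, K_q)^{1/(2v(G))} \le \hom(K_{d,d}, K_q)^{1/(2d)},
\]
so Conjecture~\ref{conj:coloring} would follow immediately from the multiplicativity inequality
\[
  \hom(G \x K_2, K_q) \ge \hom(G, K_q)^2.
\]

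For $H = \tikzHind$, this multiplicativity is proved by an explicit bipartite-swapping injection: given two independent sets $I_1, I_2 \subseteq V(G)$, one builds an independent set in $G \x K_2$ by a ``max--min'' rule based on a fixed total order on $V(G)$. The same argument works whenever $H$ is a loop-threshold graph (Theorem~\ref{thm:threshold}) because its vertex set admits a compatible partial order. For $H = K_q$ the vertices of the target are mutually symmetric and no such order is available, so the direct swap breaks down. This is not a cosmetic obstruction: the displayed inequality for $K_q$ is of the same flavor as tensor-product statements about chromatic numbers (Hedetniemi-type), so any purely combinatorial injection $(\phi_1,\phi_2)\mapsto \psi$ is likely doomed.

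My plan is therefore to proceed in two stages. First, look for a \emph{probabilistic} realization of multiplicativity: given an ordered pair $(\phi_1,\phi_2)$ of proper $q$-colorings of $G$, produce a random homomorphism $\psi\colon G \x K_2 \to K_q$ depending on $(\phi_1,\phi_2)$ and extra randomness, and show that the expected number of pre-images of any fixed $\psi$ is at most one. Second, if that fails, fall back on the occupancy method of Davies--Jenssen--Perkins--Roberts applied directly to $G$ (without first passing to the bipartite double cover): consider a uniformly random $\phi \in \Hom(G,K_q)$ and attempt to prove a local inequality bounding the entropy contribution of a random vertex $v$ together with its neighborhood, then average over vertices. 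The main obstacle is that while for $H = \tikzHind$ the occupancy fraction at $v$ is a simple function of the neighborhood configuration, for $K_q$-colorings it depends on the full joint distribution of $\phi|_{N(v)}$ in a way that resists clean bounding; the crux is to identify a concave local observable maximized by $K_{d,d}$, and I expect this is where a genuinely new idea will be needed.
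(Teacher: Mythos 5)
You have not proved anything here, and to be fair the statement itself is an open conjecture in the paper (open for all $d \ge 4$, with only $d = 3$ settled by Davies, Jenssen, Perkins, and Roberts via the Potts model), so no complete argument was available to compare against. But it is worth being precise about where your plan stalls. Your first stage reduces Conjecture~\ref{conj:coloring} to the multiplicativity inequality $\hom(G, K_q)^2 \le \hom(G \x K_2, K_q)$; this reduction is correct and is exactly the route the paper records, but the displayed inequality is itself Conjecture~\ref{conj:color-bip} of the paper, known only when $q$ is sufficiently large as a function of $G$. So stage one does not close the problem --- it trades one open conjecture for another of at least equal difficulty, and your own diagnosis (that no order on $V(K_q)$ is available to drive a swapping injection, and that the statement has a Hedetniemi-type flavor) explains why the known injective and probabilistic couplings have not been made to work.

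Your fallback, the occupancy method applied directly to $G$, is indeed the approach that succeeded for $d = 3$: one passes to the Potts model at inverse temperature $\beta$, shows the expected monochromatic-edge density $U_G^q(\beta)$ is minimized by $K_{d,d}$ (Conjecture~\ref{conj:potts}), and integrates over $\beta$. But the step you flag as ``where a genuinely new idea will be needed'' --- identifying a local observable whose extremum is attained by $K_{d,d}$ and bounding it via the joint distribution of the coloring on a neighborhood --- is precisely the unresolved core. Even in the $d = 3$ case the analysis required examining the full two-step neighborhood of a random vertex and a substantially involved linear-programming argument, and no one has extended it to $d \ge 4$. In short: your reduction is sound but circular relative to the paper's open problems, and your stage two is a correct identification of the promising method together with an accurate admission that the essential ingredient is missing. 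As a proof attempt it therefore has a genuine gap at exactly the point the conjecture is open.
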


The conjecture was recently solved for $d=3$ by Davies, Jenssen, Perkins, and Roberts \cite{DJPR3} using a novel method they developed earlier. We will discuss the method in Section~\ref{sec:occup}. The conjecture remains open for all $d\ge 4$ and $q \ge 3$. 
The above inequality is known to hold if $q$ is sufficiently large as a function of $G$ \cite{Zhao11} (the current best bound is $q > 2\binom{v(G)d/2}{4}$ \cite{Gal13}). 

The first non-trivial case of Problem~\ref{prb:hom-max} where the maximizing $G$ is not $K_{d,d}$ was obtained recently by Cohen, Perkins, and Tetali \cite{CPT}. 

\begin{theorem}[Cohen, Perkins, and Tetali~\cite{CPT}] \label{thm:wr}
For any $d$-regular graph $G$ we have
\[
\hom(G, \tikzHwr)^{1/v(G)} \le \hom(K_{d+1}, \tikzHwr)^{1/(d+1)}.
\]
\end{theorem}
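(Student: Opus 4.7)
The plan is to apply the occupancy fraction method of Davies, Jenssen, Perkins, and Roberts (to be introduced in Section~\ref{sec:occup}) to the Widom--Rowlinson model. Introduce a fugacity $\lambda > 0$ and consider the weighted partition function
\[
  Z_G(\lambda) := \sum_{\phi \in \Hom(G,\, \tikzHwr)} \lambda^{|\phi^{-1}(\textrm{red})| + |\phi^{-1}(\textrm{blue})|},
\]
so that $Z_G(1) = \hom(G, \tikzHwr)$. Writing $\mu_\lambda$ for the Gibbs measure proportional to these weights, define the \emph{occupancy fraction}
\[
  \alpha_G(\lambda) := \frac{1}{v(G)}\, \EE_{\phi \sim \mu_\lambda}\sqb{|\phi^{-1}(\textrm{red})| + |\phi^{-1}(\textrm{blue})|}.
\]
A direct differentiation yields $\tfrac{d}{d\lambda}\log Z_G(\lambda) = v(G)\,\alpha_G(\lambda)/\lambda$, and since $Z_G(0) = 1$, integrating gives
\[
  \frac{1}{v(G)}\log\hom(G, \tikzHwr) = \int_0^1 \frac{\alpha_G(\lambda)}{\lambda}\, d\lambda.
\]
It therefore suffices to establish the pointwise bound $\alpha_G(\lambda) \le \alpha_{K_{d+1}}(\lambda)$ for every $\lambda > 0$.

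To control $\alpha_G(\lambda)$ locally, sample $v$ uniformly from $V(G)$ and $\phi \sim \mu_\lambda$, then expose only the colors $\phi(N(v))$. By the spatial Markov property the conditional law of $\phi(v)$ depends on $\phi(N(v))$ only through the pair $(r,b)$ counting the red and blue neighbors of $v$. Writing $\kappa = \kappa(r,b)$ for the number of admissible colors at $v$ (so $\kappa = 2$ when $r=b=0$, $\kappa = 1$ when exactly one of $r,b$ vanishes, and $\kappa = 0$ when $r,b \ge 1$), the conditional probability that $v$ is colored equals $\lambda\kappa/(1+\lambda\kappa)$. Averaging, $\alpha_G(\lambda)$ is a linear functional of the joint distribution $q$ of $(r,b)$ under $\mu_\lambda$, whose support lies in $\set{(r,b) : r+b \le d}$.

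Now relax: maximize this linear functional over the polytope of distributions $q$ satisfying the identities that hold for every $d$-regular Gibbs configuration, namely the red/blue swap symmetry $q(r,b)=q(b,r)$, the first-moment identity relating $\EE_q[r]$ to the probability that $v$ is red (obtained by double-counting colored incidences along edges), and the further self-consistency identity obtained by performing the same local analysis at a uniformly chosen neighbor of $v$. The claim is that the optimum equals the value achieved by the $K_{d+1}$-distribution, whose support lies in $\set{rb=0}$ since all $d$ neighbors of $v$ in $K_{d+1}$ are mutually adjacent. A natural mechanism is a swapping argument: any mass placed on a pair with $r,b \ge 1$ contributes zero to the objective, so pushing it toward pairs with $rb = 0$ only helps, provided the consistency constraints can be preserved.

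Integrating the pointwise bound $\alpha_G(\lambda) \le \alpha_{K_{d+1}}(\lambda)$ from $0$ to $1$ then concludes the proof. The main obstacle is the local LP step: in contrast with the single-species hard-core model, the two interacting species of the Widom--Rowlinson model mean that any redistribution of $(r,b)$-mass perturbs both species' marginals simultaneously, and the neighbor self-consistency constraint couples the local distribution $q$ nonlinearly to the global occupancy $\alpha_G(\lambda)$. Following the Davies--Jenssen--Perkins--Roberts framework, the likely route is to write the LP in a Lagrangian/dual form, to tune the multipliers so that the distribution realized on $K_{d+1}$ satisfies the KKT optimality conditions, and then to verify analytically that this optimum is attained precisely on clique-like local profiles.
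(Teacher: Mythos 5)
Your overall framework is sound as far as it goes: the reduction of $\tfrac{1}{v(G)}\log\hom(G,\tikzHwr)$ to an integral of the occupancy fraction, the computation of the conditional law of $\phi(v)$ given the red/blue neighbor counts $(r,b)$, and the idea of relaxing to a linear program over local profiles are all legitimate, and this is indeed the strategy of the original Cohen--Perkins--Tetali paper. But the proposal stops exactly where the theorem lives. The entire content of the result, in this approach, is the pointwise inequality $\alpha_G(\lambda)\le\alpha_{K_{d+1}}(\lambda)$, and you do not establish it. The heuristic that ``mass placed on pairs with $r,b\ge 1$ contributes zero to the objective, so pushing it toward $rb=0$ only helps, provided the consistency constraints can be preserved'' is not an argument: the redistribution must preserve the edge double-counting identity and the neighbor self-consistency constraint simultaneously, and it is precisely these constraints (which couple the local profile to the global occupancy) that make the optimization nontrivial. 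Moreover, you never verify that your particular relaxation --- swap symmetry plus one moment identity plus one self-consistency relation --- even has its maximum at the $K_{d+1}$ profile; you defer this with ``the likely route is to write the LP in a Lagrangian/dual form \dots and then to verify analytically.'' In the actual CPT proof this step is a genuinely delicate LP/duality analysis, so as written the proposal is a plan with the key lemma missing, not a proof.

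For comparison, the proof given in this survey (following \cite{CCPT}) avoids the occupancy machinery entirely: one uses $\tikzHwr=\ell(H_\ind^{K_2})$ and the identities \eqref{eq:H-power} and \eqref{eq:G-loop} to get $\hom(G,\tikzHwr)=\hom(G^\circ\times K_2,\tikzHind)$, notes that $G^\circ\times K_2$ is a $(d+1)$-regular bipartite graph, applies Theorem~\ref{thm:GT} (or Theorem~\ref{thm:kahn}), and converts back using $K_{d+1}^\circ\times K_2\cong K_{d+1,d+1}$. If you want to complete your route instead, you must actually solve the constrained maximization of $\EE_q\!\left[\lambda\kappa/(1+\lambda\kappa)\right]$ and show the optimum is attained by the $K_{d+1}$ profile for every $\lambda>0$; until that is done, the proof is incomplete.
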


Theorem~\ref{thm:wr} was initially proved \cite{CPT} using the occupancy fraction method, which will be discussed in Section~\ref{sec:occup}. Subsequently, a much shorter proof was given in \cite{CCPT} (also see Sernau \cite{Ser}).\footnote{Sernau also tackled Theorem~\ref{thm:wr}, obtaining an approximate result in a version of \cite{Ser} that predated \cite{CPT} and \cite{CCPT}. After the appearance of \cite{CCPT}, Sernau corrected an error (identified by Cohen) in \cite{Ser}, and the corrected version turned out to include Theorem~\ref{thm:wr} as a special case.} These methods can be used to prove that $K_{d+1}$ is the maximizer for a large family of target loop-graphs $H$ (see Section~\ref{sec:prod}).

There are weighted generalizations of these problems and results. Though, for clarity, we defer discussing the weighted versions until Section~\ref{sec:occup}, where we will see that introducing weights leads to a powerful new differential method for proving the unweighted results.

\medskip

We conclude this section with some open problems. Galvin~\cite{Gal13} conjectured that in Problem~\ref{prb:hom-max}, the maximizing $G$ is always either $K_{d,d}$ or $K_{d+1}$, as with all the cases we have seen so far. However, Sernau~\cite{Ser} recently found a counterexample (a similar construction was independently found by Pat Devlin; See Section~\ref{sec:neither}). As it stands, there does not seem to be a clean conjecture concerning the solution to Problem~\ref{prb:hom-max} on determining the maximizing $G$. Sernau suggested the possibility that there is a finite list of maximizing $G$ for every $d$.

\begin{conjecture}
  For every $d \ge 3$, there exists a finite set $\cG_d$ of $d$-regular graphs such that for every loop-graph $H$ and every $d$-regular graph $G$ one has
\[
   \hom(G,H)^{1/v(G)} \le \max_{G' \in \cG_d} \hom(G', H)^{1/v(G')}.
\]
\end{conjecture}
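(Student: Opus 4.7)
The plan is to rephrase the conjecture as a finiteness statement about a partial order on $d$-regular graphs. Define $G \preceq G'$ to mean $\hom(G,H)^{1/v(G)} \le \hom(G',H)^{1/v(G')}$ for every loop-graph $H$; the conjecture asserts that the set of $\preceq$-maximal connected $d$-regular graphs is finite. A first reduction is to connected graphs: since $\hom(G_1 \sqcup G_2, H)^{1/v(G_1 \sqcup G_2)}$ is the weighted geometric mean of $\hom(G_i,H)^{1/v(G_i)}$, any $\preceq$-maximum that is a disjoint union must have identical $\preceq$-profiles on its components, so it is enough to build $\cG_d$ from connected graphs.

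Next I would try to compactify. Embed the set of finite connected $d$-regular graphs into the Benjamini--Schramm space of probability measures on rooted locally $d$-regular graphs, which is compact. For each fixed $H$ one hopes that $\log \hom(\cdot,H)/v(\cdot)$ extends semicontinuously, so that $\sup_G \hom(G,H)^{1/v(G)}$ is attained within the limit space. If one can further show such a maximum is always realized by (or approximable by) a finite graph, then extremal profiles can be tracked via the finitely many ``types'' of rooted neighborhoods that can appear at limit points, allowing a pigeonhole-style extraction of a finite $\cG_d$.

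The structural heart of the plan would be a \emph{finite-test-graph} lemma: find a finite family $\cH_d$ of loop-graphs such that $G \preceq G'$ is equivalent to the inequality holding for every $H \in \cH_d$. Multiplicativity under tensor product, $\hom(G,H_1\x H_2)=\hom(G,H_1)\hom(G,H_2)$, turns $\log\hom(G,\cdot)$ into a semigroup homomorphism, so in principle it should suffice to test on a generating set of loop-graphs. Once testing is reduced to a finite $\cH_d$, the map $G \mapsto (\log\hom(G,H)/v(G))_{H\in\cH_d}$ takes values in a bounded region of $\RR^{\cH_d}$, and one can hope to bound the number of Pareto-maxima by a compactness-plus-rationality argument and then lift back to concrete finite graphs.

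The main obstacle, and the reason this remains open, is that none of the existing techniques---Kahn's entropy method, the bipartite-swapping argument behind Theorem~\ref{thm:threshold}, or the occupancy-fraction method of Section~\ref{sec:occup}---produces bounds uniform in $H$. Moreover $\hom(G,H)^{1/v(G)}$ is not in general determined by the Benjamini--Schramm limit alone (global features like bipartiteness and the existence of perfect matchings can intervene), so the compactification is subtler than for random $d$-regular graphs. Since the Sernau--Devlin counterexample already shows $\cG_d \ne \{K_{d,d},K_{d+1}\}$ for $d=3$, we even lack a conjecturally correct candidate list. I would expect real progress to require a new combinatorial invariant of $d$-regular graphs that simultaneously controls the full homomorphism profile and lives in a space with only finitely many extreme points for each $d$.
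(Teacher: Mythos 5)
This statement is one of the paper's open conjectures (attributed to a suggestion of Sernau); the paper offers no proof of it, and your proposal does not provide one either -- it is a research programme whose every load-bearing step is itself an unproved (and in places doubtful) claim. Concretely: (1) Your ``finite-test-graph lemma'' does not follow from tensor multiplicativity. The identity $\hom(G,H_1\x H_2)=\hom(G,H_1)\hom(G,H_2)$ shows the class of targets $H$ for which a given inequality holds is closed under tensor products (this is exactly Sernau's observation in Section~\ref{sec:closure-tensor}), but the semigroup of loop-graphs under $\x$ is not finitely generated, and a general $H$ need not factor through any finite generating family; so there is no reduction of the relation $G\preceq G'$ to finitely many tests, and the subsequent ``Pareto-maxima in a bounded region of $\RR^{\cH_d}$'' step has nothing to stand on (boundedness alone does not bound the number of Pareto-optimal points, nor does finiteness of $\preceq$-maximal elements yield the conjecture without a dominance/chain condition). (2) The compactification step fails at the outset: $\hom(G,H)^{1/v(G)}$ is not a Benjamini--Schramm-continuous (or semicontinuous, uniformly in $H$) parameter -- as you yourself note, it is killed by non-bipartiteness when $H$ is bipartite, and the paper's own examples ($H$ a union of loops, where the answer is governed by component sizes; the Sernau--Devlin target $kK_d$) show the extremizer is sensitive to global structure invisible in the local limit. (3) The reduction to connected graphs is the only sound step, and it is essentially trivial (for each fixed $H$ the normalized count of a disjoint union is a weighted geometric mean of those of its components, hence dominated by the best component).

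So the honest assessment is that you have restated the conjecture in the language of a partial order and listed some tools (tensor closure, local limits, linear-programming/occupancy heuristics) without supplying any mechanism that could make the set $\cG_d$ finite; indeed your closing paragraph concedes exactly this. That concession is accurate: none of the techniques surveyed in the paper (entropy/H\"older projection inequalities, the bipartite swapping trick, the occupancy-fraction method) gives bounds uniform over all targets $H$, and the problem of even identifying candidate members of $\cG_d$ beyond $K_{d,d}$ and $K_{d+1}$ is open. A submission of this kind should be labelled as a plan of attack, not a proof, and any future attempt along these lines would need, at minimum, a genuine substitute for the finite-test-graph lemma, e.g.\ a structural result bounding the ``homomorphism profile'' $H\mapsto \frac{1}{v(G)}\log\hom(G,H)$ of $d$-regular graphs within a finite-dimensional or finitely generated object.
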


It has been speculated that the maximizing $G$ perhaps always has between $d+1$ and $2d$ vertices (corresponding to $K_{d+1}$ and $K_{d,d}$ respectively). 
Sernau suggested the possibility that for a fixed $H$, the maximizer is always one of $K_{d,d}$ and $K_{d+1}$ as long as $d$ is large enough.

\begin{conjecture}
	Let $H$ be a fixed loop-graph. There is some $d_H$ such that for all $d \ge d_H$ and $d$-regular graph $G$,
	\[
	\hom(G,H)^{1/v(G)} \le \max\{\hom (K_{d+1},H)^{1/(d+1)}, \hom(K_{2d}, H)^{1/(2d)}\}.
	\]
\end{conjecture}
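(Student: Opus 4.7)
The plan is to combine the occupancy-fraction / cluster-expansion framework of Davies, Jenssen, Perkins, and Roberts (as used for the $d=3$ case of Conjecture~\ref{conj:coloring} and for Theorem~\ref{thm:wr}) with a compactness argument that exploits the ``$d$ sufficiently large'' clause. The guiding picture is that for a fixed $H$ and large $d$, the class of $d$-regular graphs stratifies by local geometry, and only a handful of local types can realize the supremum of $\hom(G,H)^{1/v(G)}$; Sernau's tensor-product construction and the results in Section~\ref{sec:prod} suggest that the two extremes are a ``small clique'' type (accounting for $K_{d+1}$) and a ``dense, large'' type whose hom density is captured by the complete graph $K_{2d}$.

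The main steps I would carry out are the following. First, introduce weights $\lambda_1,\ldots,\lambda_{|V(H)|}$ on $V(H)$ and form the weighted partition function $Z_H(G;\lambda) = \sum_{\phi \in \Hom(G,H)} \prod_{v \in V(G)} \lambda_{\phi(v)}$. Using the derivative and occupancy-fraction identities of \cite{DJPR1}, express $v(G)^{-1}\log Z_H(G;\lambda)$ as an average over $v \in V(G)$ of a local quantity $q(v)$ that depends only on the joint distribution of $\phi$ on the closed neighborhood $N[v]$. Second, classify the possible local types of $N[v]$ in a $d$-regular graph, observing that the two combinatorial extremes are ``$N[v]$ spans a clique'' (forcing $G = K_{d+1}$) and ``$N(v)$ is independent and $G$ is bipartite-like'' (covered by Theorem~\ref{thm:GT}). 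Third, show via direct calculation, combined with the bipartite swapping trick and Sernau's tensor-product method where needed, that every intermediate local type is strictly dominated by one of the two extremes once $d \ge d_H$. Fourth, assemble these local bounds into a global bound on $Z_H(G;\lambda)^{1/v(G)}$ and specialize $\lambda$ to recover the desired inequality $\hom(G,H)^{1/v(G)} \le \max\{\hom(K_{d+1},H)^{1/(d+1)},\hom(K_{2d},H)^{1/(2d)}\}$.

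The chief obstacle is the third step. The counterexamples of Sernau and Devlin (Section~\ref{sec:neither}) show that for small $d$, genuinely intermediate graphs can beat both $K_{d+1}$ and $K_{2d}$; the entire content of the conjecture is that this phenomenon disappears once $d$ crosses a threshold $d_H$. Proving this requires a quantitative handle on the rate at which intermediate hom densities decay relative to the two extremes, and existing methods (swapping, cluster expansion) currently deliver such bounds only for very restricted $H$ -- loop-threshold targets and the Widom--Rowlinson target. A secondary subtlety, visible already in the statement, is that $K_{2d}$ is $(2d-1)$-regular rather than $d$-regular, so it serves only as an upper-bounding comparator on the right-hand side and never as a competitor in the supremum on the left; the proof plan must therefore yield a uniform bound against this denser auxiliary object, rather than merely identifying an extremal $d$-regular maximizer, and calibrating this against the competing clique bound from $K_{d+1}$ is precisely where a sharp choice of $d_H$ must be extracted.
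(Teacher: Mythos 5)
This statement is one of the paper's open conjectures (recording a suggestion of Sernau); the paper contains no proof of it, and your proposal does not supply one either. The decisive step of your plan --- step three, showing that every ``intermediate'' local type is strictly dominated by one of the two extremes once $d \ge d_H$ --- is not an ingredient of a proof but is the entire content of the conjecture, and you give no argument for it beyond conceding that the available tools (the bipartite swapping trick, tensor-product closure, the occupancy/linear-programming method of \cite{DJPR1}) only deliver such comparisons for very special targets such as loop-threshold graphs and targets of the form $\ell(A^B)$. The other steps also do not hold as stated: for a general loop-graph $H$ there is no known way to write $v(G)^{-1}\log Z_H(G;\lambda)$ as an average of a local functional of the law of $\phi$ on closed neighborhoods that one can then optimize --- even for $H=K_q$ the occupancy-method program is only carried out for $d=3$ (Conjecture~\ref{conj:coloring}, Conjecture~\ref{conj:potts}), and your ``$N(v)$ independent, bipartite-like'' branch cannot invoke Theorem~\ref{thm:GT}, which requires $G$ to be genuinely bipartite; even the triangle-free strengthening of that theorem is itself an open conjecture stated in Section~\ref{sec:occup}. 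Moreover, the counterexamples of Section~\ref{sec:neither} show that any purely local stratification must fail for small $d$, and nothing in your outline produces a quantitative threshold $d_H$ at which it would begin to succeed.

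One further point of interpretation: the comparator $K_{2d}$ in the displayed inequality is evidently a slip for $K_{d,d}$ --- the surrounding text says the maximizer should be ``one of $K_{d,d}$ and $K_{d+1}$,'' and the exponent $1/(2d)$ matches $v(K_{d,d})=2d$. Your ``secondary subtlety'' about a $(2d-1)$-regular auxiliary object that is never a competitor in the supremum is therefore an artifact of reading that typo literally rather than a genuine structural feature the proof must calibrate against; the intended statement is the familiar $K_{d,d}$-versus-$K_{d+1}$ dichotomy, for large $d$.
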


We do not know if the supremum in Problem~\ref{prb:hom-max} can always be attained.

\begin{question}
	Fix $d \ge 3$ and a loop-graph $H$. Is the supremum of $\hom(G,H)^{1/v(G)}$ over all $d$-regular graphs $G$ always attained by some $G$?
\end{question}

It could be the case that the supremum is the limit coming from a sequence of graphs $G$ of increasing size instead of a single graph $G$ on finitely many vertices. This is indeed the case if we wish to \emph{minimize} $\hom(G, \tikzHwr)^{1/v(G)}$ over $d$-regular graphs $G$. Csikv\'ari~\cite{Csi16ar} recently showed that the infimum of $\hom(G, \tikzHwr)^{1/v(G)}$ is given by a limit of $d$-regular graphs $G$ with increasing girth (i.e., $G$ locally looks like a $d$-regular tree at every vertex).

\section{Projection inequalities} \label{sec:proj}

The original proofs of Theorems~\ref{thm:kahn} and \ref{thm:GT} use beautiful entropy arguments, with a key input being Shearer's entropy inequality \cite{CGFS86}. Unfortunately we will not cover the entropy arguments as they would lead us too far astray. See Galvin's lecture notes~\cite{Gal} for a nice exposition of the entropy method for counting problems. The first non-entropy proof of these two theorems was given in \cite{LZ15} using a variant of H\"older's inequality, which we describe in this section. We begin our discussion with a classical projection inequality. See Friedgut's \textsc{Monthly} article \cite{Fri04} concerning how the projection inequalities relate to entropy.

Let $P_{xy}$ denote the projection operator from $\RR^3$ onto the $xy$-plane. Similarly define $P_{xz}$ and $P_{yz}$. Let $S$ be a body in $\RR^3$ such that each of the three projections $P_{xz}(S)$, $P_{yz}(S)$, and $P_{xz}(S)$ has area $1$. What is the maximum volume of $S$? (This is not as obvious as it may first appear. Note that we are projecting onto the 2-D coordinate planes as opposed to the 1-D axes.)

The answer is $1$, attained when $S$ is an axes-parallel cube of side-length $1$. Indeed, equivalently (by re-scaling), we have
\begin{equation}\label{eq:vol-projection}
\vol(S)^2 \le \area(P_{xy}(S)) \area(P_{xz}(S)) \area(P_{yz}(S)).
\end{equation}
Such results were first obtained by Loomis and Whitney~\cite{LW49}. More generally, for any functions $f, g, h \colon \RR^2 \to \RR$ (assuming integrability conditions)
\begin{multline} \label{eq:lw-3}
\left( \int_{\RR^3} f(x,y)g(x,z)h(y,z) \, dxdydz\right)^2
\\\le 
\left( \int_{\RR^2} f(x,y)^2 \, dxdy\right)
\left(\int_{\RR^2} g(x,z)^2 \, dxdz\right)
\left( \int_{\RR^2} h(y,z)^2 \, dydz\right).
\end{multline}
To see how \eqref{eq:lw-3} implies \eqref{eq:vol-projection}, take $f, g, h$ to be the indicator functions of the projections of $S$ onto the three coordinates planes, and observe that $1_{S}(x,y,z) \le f(x,y)g(x,z)h(y,z)$.

Let us prove \eqref{eq:lw-3}. In fact, $x, y, z$ can vary over any measurable space instead of $\RR$. In our application the domains will be discrete, i.e., the integral will be a sum. It suffices to prove the inequality when $f, g, h$ are nonnegative. The proof is via three simple applications of the Cauchy--Schwarz inequality, to the variables $x$, $y$, $z$, one at a time in that order:
\begin{align*}
&\int f(x,y)g(x,z)h(y,z) \, dxdydz
\\
&\le 
\int \left(\int f(x,y)^2 \, dx\right)^{1/2} \left(\int g(x,z)^2 \, dx\right)^{1/2} h(y,z) \, dydz
\\
&\le 
\int \left(\int f(x,y)^2 \, dx dy\right)^{1/2} \left(\int g(x,z)^2 \, dx\right)^{1/2} \left(\int h(y,z)^2 \, dy\right)^{1/2} \, dz
\\
&\le 
\left(\int f(x,y)^2 \, dx dy\right)^{1/2} \left(\int g(x,z)^2 \, dx dz\right)^{1/2} \left(\int h(y,z)^2 \, dydz\right)^{1/2}
\\
&= \|f\|_2 \|g\|_2 \|h\|_2,
\end{align*}
where 
\[
\|f\|_p := \left( \int |f|^p \right)^{1/p}
\]
is the $L^p$ norm. This proves \eqref{eq:lw-3}. This inequality strengthens H\"older's inequality, since a direct application of H\"older's inequality would yield
\begin{equation}\label{eq:holder-3}
\int f g h \le \|f\|_{3}\|g\|_{3}\|h\|_{3}.
\end{equation}
What we have shown is that whenever each of the variables $x, y, z$ appears in the argument of exactly two of the three functions $f, g, h$, then the $L^3$ norms on the right-hand side of~\eqref{eq:holder-3} can be sharpened to $L^2$ norms (we always have $\| f\|_2 \le \|f\|_3$ by convexity).

The above proof easily generalizes to prove the following more general result \cite{Fin92} (also see \cite[Theorem~3.1]{LZ15}). It is also related to the Brascamp--Lieb inequality~\cite{BL76}.

\begin{theorem} \label{thm:holder-ext}
Let $A_1, \dots, A_m$ be subsets of $[n]:= \{1, 2, \dots, n\}$ such that each $i \in [n]$ appears in exactly $d$ of the sets $A_j$. Let $\Omega_i$ be a measure space for each $i \in [n]$. For each $j$, let $f_j \colon \prod_{i \in A_j} \Omega_i \to \RR$ be measurable functions. Let $P_j$ denote the projection of $\RR^n$ onto the coordinates indexed by $A_j$. Then
\[
\int_{\Omega_1 \times \cdots \times \Omega_n} f_1(P_1(\bx)) \cdots f_m(P_m(\bx)) \, d\bx \le \|f_1\|_{d} \cdots \|f_m\|_{d}.
\]
\end{theorem}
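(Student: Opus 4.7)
The plan is to prove Theorem~\ref{thm:holder-ext} by induction on $n$, generalizing the three-step Cauchy--Schwarz argument already given for the $n = 3$, $d = 2$ case. At each inductive step I integrate out one coordinate, applying H\"older's inequality with $d$ equal exponents to the $d$ functions that depend on it; the step is designed so that the uniform-degree hypothesis (each index lies in exactly $d$ of the $A_j$) is preserved, letting the induction close cleanly.

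For the base case $n = 0$ every $A_j$ is empty and the inequality becomes the trivial equality $\prod_j f_j = \prod_j \|f_j\|_d$. For the inductive step, fix $(x_2, \dots, x_n)$ and split the product according to whether $1 \in A_j$. Factors with $1 \notin A_j$ are constant in $x_1$ and pull outside the $\Omega_1$-integral. To the remaining $d$ factors I apply H\"older with all exponents equal to $d$ (so the reciprocals sum to $1$):
\[
  \int_{\Omega_1} \prod_{j\,:\,1 \in A_j} f_j(P_j(\bx)) \, dx_1
  \;\le\; \prod_{j\,:\,1 \in A_j} \left( \int_{\Omega_1} |f_j(P_j(\bx))|^d \, dx_1 \right)^{1/d}.
\]

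Next I package the result as a lower-dimensional instance of the same problem. Define $g_j$ on $\prod_{i \in A'_j} \Omega_i$ with $A'_j := A_j \setminus \{1\}$ by setting $g_j = f_j$ if $1 \notin A_j$, and $g_j := \bigl( \int_{\Omega_1} |f_j|^d \, dx_1 \bigr)^{1/d}$ otherwise. Two facts need checking. First, $\|g_j\|_d = \|f_j\|_d$ in both cases: in the second case this is Fubini, since $\int g_j^d = \int\!\!\int |f_j|^d$. Second, each $i \in \{2, \dots, n\}$ still belongs to exactly $d$ of the sets $A'_j$, because passing from $A_j$ to $A'_j$ changes incidences only at coordinate $1$. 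So the inductive hypothesis applies in dimension $n - 1$ with the same $m$ and $d$, and chaining it with the H\"older bound gives
\[
  \int \prod_j f_j(P_j(\bx)) \, d\bx
  \;\le\; \int \prod_j g_j
  \;\le\; \prod_j \|g_j\|_d
  \;=\; \prod_j \|f_j\|_d,
\]
closing the induction.

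The main obstacle is bookkeeping rather than any deep ingredient: one must verify that the replacement $f_j \mapsto g_j$ preserves both the $L^d$ norm and the uniform-degree condition on the index sets, so that the inductive call is genuinely to a problem of the same form with one fewer coordinate. Using H\"older with exponent $d$ (rather than Cauchy--Schwarz, as in the $d = 2$ case displayed just before the theorem) is precisely what matches the covering multiplicity and produces the $L^d$ norms on the right-hand side.
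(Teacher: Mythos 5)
Your induction on $n$ --- integrating out one coordinate at a time via H\"older with $d$ equal exponents, then repackaging the partially integrated functions $g_j$ so that the norms and the uniform-degree condition are preserved --- is precisely the generalization the paper intends when it says the three-variable Cauchy--Schwarz argument ``easily generalizes,'' so your proof is correct and takes essentially the same route. The only thing to add is the standard opening reduction to nonnegative $f_j$ (replace each $f_j$ by $|f_j|$, which can only increase the left side and leaves the right side unchanged), so that pulling the factors with $1 \notin A_j$ out of the $\Omega_1$-integral and multiplying the resulting inequalities is unambiguously legitimate.
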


Using this inequality, we now prove Theorem~\ref{thm:GT}.

\begin{proof}[Proof of Theorem~\ref{thm:GT}] \cite{LZ15}
Let $V(G) = U \cup W$ be a bipartition of $G$. Since $G$ is $d$-regular, $|U| = |W| = v(G)/2$. For any $z_1, \dots, z_d \in V(H)$, let
\[
f(z_1, \dots, z_d) := |\{z \in V(H) : z_1z, \dots, z_dz \in E(H)\}|
\]
denote the size of the common neighborhood of $z_1, \dots, z_d$ in $H$.

For any $\phi \colon U \to V(H)$, the number of ways to extend $\phi$ to a graph homomorphism from $G$ to $H$ can be determined by noting that for each $w \in W$, there are exactly $f(\phi(u) : u \in N(w))$ choices for its image $\phi(w)$, independently of the choices for other vertices in $W$. Therefore,
\[
\hom(G, H) = \sum_{\phi \colon U \to V(H)} \prod_{w \in W} f(\phi(u) : u \in N(w)).
\]
Since $G$ is $d$-regular, every $u \in U$ is contained in $N(w)$ for exactly $d$ different $w \in W$. Therefore, by applying Theorem~\ref{thm:holder-ext} with the counting measure on $V(H)$, we find that
\[
\hom(G, H) 
\le \|f \|_{d}^{|W|}.
\]
Note that
\[
\|f\|_{d}^d = \sum_{z_1, \dots, z_d \in V(H)} f(z_1,\dots, z_d)^d = \hom(K_{d,d},H).
\]
Therefore,
\[
\hom(G, H) \le \hom(K_{d,d},H)^{|W|/d} = \hom(K_{d,d},H)^{v(G)/(2d)}. \qedhere
\]
\end{proof}

\section{A bipartite swapping trick} \label{sec:swap}

In the previous section, we proved Theorem~\ref{thm:kahn} about the maximum number of independent sets in a bipartite $d$-regular graph $G$. Now we use it to deduce Theorem~\ref{thm:zhao}, showing that the bipartite hypothesis can be dropped. The proof follows \cite{Zhao10,Zhao11}. The idea is to transform $G$ into a bipartite graph, namely the \emph{bipartite double cover} $G \x K_2$, with vertex set $V(G) \x \{0,1\}$. The vertices of $G \x K_2$ are labeled $v_i$ for $v \in V(G)$ and $i \in \{0,1\}$. Its edges are $u_0v_1$ for all $uv \in E(G)$. See Figure~\ref{fig:swap}. This construction is a special case of the graph tensor product, which we define in the next section. Note that $G \x K_2$ is always a bipartite graph. The following key lemma shows that $G \x K_2$ always has at least as many independent sets as two disjoint copies of $G$.

\begin{lemma}[\cite{Zhao10}] \label{lem:indep-bip}
  Let $G$ be any graph (not necessarily regular). Then
  \[
  i(G)^2 \le i(G \x K_2).
  \]
\end{lemma}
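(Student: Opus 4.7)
The plan is to build an explicit injection from ordered pairs $(I,J)$ of independent sets of $G$ into independent sets of $G \x K_2$. First, identify $I(G \x K_2)$ with ordered pairs $(A,B)$ of subsets of $V(G)$ satisfying the condition that no edge of $G$ has one endpoint in $A$ and the other in $B$: given $S \in I(G \x K_2)$, set $A = \set{v \in V(G) : v_0 \in S}$ and $B = \set{v \in V(G) : v_1 \in S}$. It then suffices to inject $I(G) \x I(G)$ into this family of ``cross-free'' pairs $(A,B)$.

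I would begin with the identity map $(I,J) \mapsto (A,B) := (I,J)$ and diagnose the obstruction. A ``bad edge'' is an edge $uv \in E(G)$ with $u \in I$ and $v \in J$. Because $I$ and $J$ are each independent, such an edge must in fact have $u \in I \setminus J$ and $v \in J \setminus I$, so every bad edge lies in the induced subgraph $H := G[I \triangle J]$. Moreover $H$ is bipartite with bipartition $(I \setminus J,\ J \setminus I)$, since any edge internal to either side would contradict independence of $I$ or $J$. This suggests repairing the pair component by component in $H$: within each connected component $C$ of $H$, I may either keep the vertices of $C$ on their current sides or flip both sides of $C$ entirely, and these are the only two reassignments within $C$ consistent with its bipartition.

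To make this canonical, I fix an arbitrary linear order on $V(G)$. Given $(I,J)$, I define $(A,B)$ by: every $v \in I \cap J$ goes into both $A$ and $B$; every $v \notin I \cup J$ goes into neither; and for each connected component $C$ of $H$, letting $w_C$ be the minimum-indexed vertex of $C$, I place all of $C$ inside $A$ (and none in $B$) if $w_C \in I$, and all of $C$ inside $B$ otherwise. A short case check then shows that $(A,B)$ is cross-free: any edge meeting $I \cap J$ or $V(G) \setminus (I \cup J)$ is ruled out by independence of $I$ and $J$, and any remaining edge lies inside $H$ and hence in a single component $C$, both endpoints of which have been pushed to the same side of $(A,B)$.

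Injectivity follows by inverting the construction. From $(A,B)$ one recovers $I \cap J = A \cap B$ and $I \cup J = A \cup B$, hence $I \triangle J$ and thus the subgraph $H$ and its components. For each component $C$, the bipartition into $I \cap C$ and $J \cap C$ is forced up to a global swap by $H$ alone, and whichever of $A$ or $B$ contains $C$ tells us whether the minimum-indexed vertex $w_C$ belongs to $I$ or $J$, thereby pinning down the correct bipartition. I expect the main obstacle to be spotting the repair rule itself: the crucial structural fact is that the bad-edge subgraph is automatically bipartite along $(I \setminus J,\ J \setminus I)$, which unlocks the component-wise swap; once that is in hand, both the verification that $(A,B)$ is cross-free and the recovery argument for injectivity are routine.
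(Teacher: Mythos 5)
Your proposal is correct and is essentially the paper's own bipartite swapping trick: you exploit the fact that independence of $I$ and $J$ forces the conflict structure (here $G[I\triangle J]$, in the paper the bad-edge subgraph) to be bipartite, perform a canonical swap to eliminate cross edges, and prove injectivity by recovering that structure from the image and undoing the swap. The only difference is in the canonical rule --- the paper swaps across the first vertex subset (in a fixed ordering) separating all bad edges, while you collapse each component of $G[I\triangle J]$ onto one side of $G\times K_2$ according to a least-vertex rule --- which amounts to a specific choice of the same kind of swap.
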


Since $G \x K_2$ is bipartite and $d$-regular, Theorem~\ref{thm:kahn} implies
\[
i(G)^2 \le i(G \x K_2) \le (2^{d+1} - 1)^{n/d},
\]
so that Theorem~\ref{thm:zhao} follows immediately. See Figure~\ref{fig:swap} for an illustration of the following proof.

\begin{proof}[Proof of Lemma~\ref{lem:indep-bip}]
Let $2 G$ denote a disjoint union of two copies of $G$. Label its vertices by $v_i$ with $v \in V$ and $i \in \{0,1\}$ so that its edges are $u_iv_i$ with $uv \in E(G)$ and $i \in \{0,1\}$. We will give an injection $\phi \colon I(2 G) \to I(G \x K_2)$. Recall that $I(G)$ is the set of independent sets of $G$. The injection would imply $i(G)^2 = i(2G) \le i(G \x K_2)$ as desired.

Fix an arbitrary order on all subsets of $V(G)$. 
Let $S$ be an independent set of $2G$. Let
\[
E_\bad(S) := \{uv \in E(G) : u_0, v_1 \in S\}.
\]
Note that $E_\bad(S)$ is a bipartite subgraph of $G$, since each edge of $E_\bad$ has exactly one endpoint in $\{v \in V(G) : v_0 \in S\}$ but not both (or else $S$ would not be independent). Let $A$ denote the first subset (in the previously fixed ordering) of $V(G)$ such that all edges in $E_\bad(S)$ have one vertex in $A$ and the other outside $A$. Define $\phi(S)$ to be the subset of $V(G) \x \{0,1\}$ obtained by ``swapping'' the pairs in $A$, i.e., for all $v \in A$, $v_i \in \phi(S)$ if and only if $v_{1-i} \in S$ for each $i \in \{0,1\}$, and for all $v \notin A$, $v_i \in \phi(S)$ if and only if $v_i \in S$ for each $i \in \{0,1\}$. It is not hard to verify that $\phi(S)$ is an independent set in $G \x K_2$. The swapping procedure fixes the ``bad'' edges.

It remains to verify that $\phi$ is an injection. For every $S \in I(2G)$, once we know $T = \phi(S)$, we can recover $S$ by first setting
\[
E'_\bad(T) = \{uv \in E(G) : u_i, v_i \in T \text{ for some } i \in \{0,1\} \},
\]
so that $E_\bad(S) = E_\bad'(T)$, and then finding $A$ as earlier and swapping the pairs of $A$ back. (Remark: it follows that $T \in I(G\x K_2)$ lies in the image of $\phi$ if and only if $E_\bad'(T)$ is bipartite.)
\end{proof}

\begin{figure}
\begin{tikzpicture}[scale=.8]
\begin{scope}
  \begin{scope}[shift={(-.1,.2)}]
    \node[W] (a) at (0,0) {};
    \node[B] (b) at (3,0) {};
    \node[W] (c) at (1,1) {};
    \node[W] (d) at (2,1) {};
    \node[B] (e) at (0,2) {};
    \node[W] (f) at (3,2) {};
  \end{scope}
  \begin{scope}[shift={(.1,-.2)}]
    \node[W] (a1) at (0,0) {};
    \node[W] (b1) at (3,0) {};
    \node[B] (c1) at (1,1) {};
    \node[W] (d1) at (2,1) {};
    \node[W] (e1) at (0,2) {};
    \node[B] (f1) at (3,2) {};
  \end{scope}	
  \draw (a)--(b)--(d)--(c)--(a)--(e)--(c); \draw (e)--(f)--(b); \draw (d)--(f);	
  \draw (a1)--(b1)--(d1)--(c1)--(a1)--(e1)--(c1); \draw (e1)--(f1)--(b1); \draw (d1)--(f1);
  \node at (1.5,-.8) {$2G$};
\end{scope}
\begin{scope}[shift={(6,0)}]
  \begin{scope}[shift={(-.1,.2)}]
    \node[W] (a) at (0,0) {};
    \node[B] (b) at (3,0) {};
    \node[W] (c) at (1,1) {};
    \node[W] (d) at (2,1) {};
    \node[B] (e) at (0,2) {};
    \node[W] (f) at (3,2) {};
  \end{scope}
  \begin{scope}[shift={(.1,-.2)}]
    \node[W] (a1) at (0,0) {};
    \node[W] (b1) at (3,0) {};
    \node[B] (c1) at (1,1) {};
    \node[W] (d1) at (2,1) {};
    \node[W] (e1) at (0,2) {};
    \node[B] (f1) at (3,2) {};
  \end{scope}
  \draw[dashed] (0,2) circle (.8);
  \draw[dashed] (3,0) circle (.8);
  \draw (a)--(b1)--(d)--(c1)--(a)--(e1)--(c);
  \draw (e1)--(f)--(b1); 
  \draw (d1)--(f);	
  \draw (a1)--(b)--(d1)--(c)--(a1)--(e)--(c1); 
  \draw (f1)--(b); 
  \draw (d)--(f1);
  \draw[ultra thick] (e)--(c1);
  \draw[ultra thick] (b)--(f1);
  \draw[ultra thick] (e)--(f1);
  \node at (1.5,-.8) {$G \x K_2$};        
\end{scope}
\begin{scope}[shift={(12,0)}]
  \begin{scope}[shift={(-.1,.2)}]
    \node[W] (a) at (0,0) {};
    \node[W] (b) at (3,0) {};
    \node[W] (c) at (1,1) {};
    \node[W] (d) at (2,1) {};
    \node[W] (e) at (0,2) {};
    \node[W] (f) at (3,2) {};
  \end{scope}
  \begin{scope}[shift={(.1,-.2)}]
    \node[W] (a1) at (0,0) {};
    \node[B] (b1) at (3,0) {};
    \node[B] (c1) at (1,1) {};
    \node[W] (d1) at (2,1) {};
    \node[B] (e1) at (0,2) {};
    \node[B] (f1) at (3,2) {};
  \end{scope}
  \draw (a)--(b1)--(d)--(c1)--(a)--(e1)--(c); \draw (e1)--(f)--(b1); \draw (d1)--(f);	
  \draw (a1)--(b)--(d1)--(c)--(a1)--(e)--(c1); \draw (e)--(f1)--(b); \draw (d)--(f1);		
  \node at (1.5,-.8) {$G \x K_2$};
\end{scope}
\end{tikzpicture}
\caption{The bipartite swapping trick in the proof of Lemma~\ref{lem:indep-bip}: swapping the circled pairs of vertices (denoted $A$ in the proof) fixes the bad edges (bolded), transforming an independent set of $2G$ into an independent set of $G \x K_2$.}
\label{fig:swap}
\end{figure}

In \cite{Zhao11}, the above method was used to extend Theorem~\ref{thm:GT} to Theorem~\ref{thm:threshold}, and  more generally, a wider family of target graphs defined below.

\begin{definition}
  A loop-graph $H$ is a \emph{bipartite swapping target} if $H^\bst$ is bipartite, where $H^\bst$ is the auxiliary graph defined by taking $V(H^\bst) = V(H) \x V(H)$ and an edge between $(u,v)$ and $(u',v')$ if and only if
\[
  uu',vv' \in E(H) \quad \text{and} \quad \{uv' \notin E(H) \text{ or } u'v \notin E(H)\}.
\]
\end{definition}

\begin{theorem}[\cite{Zhao11}] \label{thm:bst} Let $H$ be a bipartite swapping target. Then $\hom(G, H)^2 \le \hom(G \x K_2, H)$ for all graphs $G$. Consequently, for $d$-regular graphs $G$, one has
\[
\hom(G, H)^{1/v(G)} \le \hom(K_{d,d},H)^{1/(2d)}.
\]
\end{theorem}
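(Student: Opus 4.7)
The plan is to follow the strategy of Lemma~\ref{lem:indep-bip} with homomorphisms in place of independent sets. I first prove the unweighted two-copies inequality $\hom(G,H)^2 \le \hom(G \x K_2, H)$ by constructing an injection $\Hom(G,H)^2 \hookrightarrow \Hom(G \x K_2, H)$. Granted this, Theorem~\ref{thm:GT} applied to the bipartite $d$-regular graph $G \x K_2$ (which has $2v(G)$ vertices) gives $\hom(G,H)^2 \le \hom(K_{d,d}, H)^{v(G)/d}$, and taking $2v(G)$-th roots yields the claimed regular inequality.

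To build the injection, I view a pair $(\phi,\psi) \in \Hom(G,H)^2$ as a map $\Phi \colon V(G) \x \{0,1\} \to V(H)$ with $\Phi(v_0) = \phi(v)$ and $\Phi(v_1) = \psi(v)$. Such a $\Phi$ is automatically a homomorphism out of the disjoint union $2G$, but to be a homomorphism out of $G \x K_2$ it must additionally satisfy $\phi(u)\psi(v), \psi(u)\phi(v) \in E(H)$ for every $uv \in E(G)$. Call $uv$ \emph{bad} for $(\phi,\psi)$ if this condition fails. Since $\phi$ and $\psi$ are themselves homomorphisms, the automatic conditions $\phi(u)\phi(v), \psi(u)\psi(v) \in E(H)$ combine with the definition of $H^{\bst}$ to show that $uv$ is bad exactly when $(\phi(u),\psi(u))$ is joined to $(\phi(v),\psi(v))$ in $H^{\bst}$. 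Thus the map $v \mapsto (\phi(v),\psi(v))$ sends the bad-edge subgraph of $G$ homomorphically into $H^{\bst}$; since $H^{\bst}$ is bipartite by hypothesis, the bad-edge subgraph is bipartite as well.

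Now fix once and for all an ordering of the subsets of $V(G)$, and let $A = A(\phi,\psi)$ be the first subset in this order that forms one side of a bipartition of the bad edges. Define $\Phi'$ by swapping the roles of $\phi$ and $\psi$ on $A$: for $v \in A$ set $\Phi'(v_i) = \Phi(v_{1-i})$, and for $v \notin A$ set $\Phi'(v_i) = \Phi(v_i)$. A routine case analysis on $uv \in E(G)$, split according to which endpoints lie in $A$, shows $\Phi' \in \Hom(G \x K_2, H)$: edges with exactly one endpoint in $A$ are handled because $\phi$ and $\psi$ are each homomorphisms of $G$, while edges with both endpoints on the same side of the cut are forced to be non-bad and therefore already satisfy the required adjacency conditions. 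For injectivity, I observe that the set $\{uv \in E(G) : \Phi'(u_0)\Phi'(v_0) \notin E(H) \text{ or } \Phi'(u_1)\Phi'(v_1) \notin E(H)\}$ can be read off $\Phi'$ alone and, by the same case analysis, equals the original set of bad edges of $(\phi,\psi)$; this lets me recover $A$ and then undo the swap to reconstruct $(\phi,\psi)$.

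The main obstacle — really the heart of the argument — is the bipartiteness of the bad-edge subgraph, which is exactly what the bipartite-swapping-target hypothesis on $H$ was engineered to guarantee. Once that step is in place, the swapping construction and the injectivity verification are straightforward combinatorial bookkeeping that closely parallels the proof of Lemma~\ref{lem:indep-bip}.
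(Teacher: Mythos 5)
Your proof is correct and follows exactly the route the paper indicates for Theorem~\ref{thm:bst}: extend the swapping argument of Lemma~\ref{lem:indep-bip} by encoding a pair of homomorphisms as a map on $V(G)\times\{0,1\}$, noting that the bad edges carry a homomorphism into $H^{\bst}$ (so bipartiteness of $H^{\bst}$ makes the bad-edge subgraph bipartite), swapping on a canonically chosen side to get an injection into $\Hom(G\times K_2,H)$, and then applying Theorem~\ref{thm:GT} to the $d$-regular bipartite graph $G\times K_2$. No gaps; this matches the intended proof from \cite{Zhao11}.
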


Sernau~\cite{Ser} extended the class of $H$ for which Theorem~\ref{thm:bst} holds by observing that this class is closed under taking tensor products. See Section~\ref{sec:closure-tensor}.

\begin{example} Let $H$ be $P_k$ (the path with $k$ vertices) with a single loop added at the $i$-th vertex. Then $H$ is a bipartite swapping target if $i \in \{1,2,k-1,k-2\}$. The bipartition of $H^\bst$ is indicated below by vertex colors.
\begin{center}

\begin{tikzpicture}[scale=.6]

\begin{scope}
	\begin{scope}
		\foreach \i in {0,...,7}{
			\node[P] (\i) at (0,\i) {};
		}
		\draw (7)--(6)--(5)--(4)--(3)--(2)--(1)--(0) edge[-,in = 135, out = 225, loop, distance=1cm] (0);
		\node at (0,-1) {$H$};
	\end{scope}
	
	\begin{scope}[xshift=1.5cm]
		\node at (3.5,-1) {$H^{\bst}$};
		\node[B] (7 3) at (7,3) {};
		\node[W] (4 7) at (4,7) {};
		\node[W] (1 3) at (1,3) {};
		\node[W] (6 4) at (6,4) {};
		\node[B] (3 0) at (3,0) {};
		\node[B] (5 4) at (5,4) {};
		\node[W] (0 7) at (0,7) {};
		\node[B] (5 6) at (5,6) {};
		\node[B] (2 6) at (2,6) {};
		\node[B] (1 6) at (1,6) {};
		\node[B] (5 1) at (5,1) {};
		\node[W] (3 7) at (3,7) {};
		\node[W] (2 5) at (2,5) {};
		\node[W] (0 3) at (0,3) {};
		\node[B] (7 2) at (7,2) {};
		\node[W] (4 0) at (4,0) {};
		\node[B] (1 2) at (1,2) {};
		\node[W] (6 7) at (6,7) {};
		\node[B] (3 3) at (3,3) {};
		\node[W] (2 0) at (2,0) {};
		\node[B] (7 6) at (7,6) {};
		\node[B] (4 4) at (4,4) {};
		\node[W] (6 3) at (6,3) {};
		\node[W] (1 5) at (1,5) {};
		\node[B] (3 6) at (3,6) {};
		\node[B] (2 2) at (2,2) {};
		\node[B] (7 7) at (7,7) {};
		\node[W] (5 7) at (5,7) {};
		\node[B] (5 3) at (5,3) {};
		\node[W] (4 1) at (4,1) {};
		\node[B] (1 1) at (1,1) {};
		\node[W] (2 7) at (2,7) {};
		\node[B] (3 2) at (3,2) {};
		\node[B] (0 0) at (0,0) {};
		\node[B] (6 6) at (6,6) {};
		\node[B] (5 0) at (5,0) {};
		\node[B] (7 1) at (7,1) {};
		\node[W] (4 5) at (4,5) {};
		\node[B] (0 4) at (0,4) {};
		\node[B] (5 5) at (5,5) {};
		\node[B] (1 4) at (1,4) {};
		\node[W] (6 0) at (6,0) {};
		\node[B] (7 5) at (7,5) {};
		\node[W] (2 3) at (2,3) {};
		\node[W] (2 1) at (2,1) {};
		\node[W] (4 2) at (4,2) {};
		\node[B] (1 0) at (1,0) {};
		\node[W] (6 5) at (6,5) {};
		\node[W] (3 5) at (3,5) {};
		\node[W] (0 1) at (0,1) {};
		\node[B] (7 0) at (7,0) {};
		\node[B] (4 6) at (4,6) {};
		\node[B] (5 2) at (5,2) {};
		\node[W] (6 1) at (6,1) {};
		\node[B] (3 1) at (3,1) {};
		\node[B] (0 2) at (0,2) {};
		\node[B] (7 4) at (7,4) {};
		\node[B] (0 6) at (0,6) {};
		\node[W] (6 2) at (6,2) {};
		\node[W] (4 3) at (4,3) {};
		\node[W] (1 7) at (1,7) {};
		\node[W] (0 5) at (0,5) {};
		\node[B] (3 4) at (3,4) {};
		\node[B] (2 4) at (2,4) {};
		\draw (7 3) -- (6 4);
		\draw (7 3) -- (6 2);
		\draw (4 7) -- (5 6);
		\draw (4 7) -- (3 6);
		\draw (1 3) -- (2 4);
		\draw (1 3) -- (0 2);
		\draw (1 3) -- (0 4);
		\draw (6 4) -- (7 5);
		\draw (6 4) -- (5 3);
		\draw (3 0) -- (2 0);
		\draw (3 0) -- (4 1);
		\draw (3 0) -- (2 1);
		\draw (3 0) -- (4 0);
		\draw (5 4) -- (4 5);
		\draw (5 4) -- (6 3);
		\draw (5 4) -- (6 5);
		\draw (5 4) -- (4 3);
		\draw (0 7) -- (0 6);
		\draw (0 7) -- (1 6);
		\draw (5 6) -- (4 5);
		\draw (5 6) -- (6 7);
		\draw (5 6) -- (6 5);
		\draw (2 6) -- (1 5);
		\draw (2 6) -- (3 5);
		\draw (2 6) -- (3 7);
		\draw (2 6) -- (1 7);
		\draw (1 6) -- (2 7);
		\draw (1 6) -- (2 5);
		\draw (1 6) -- (0 5);
		\draw (5 1) -- (4 2);
		\draw (5 1) -- (6 2);
		\draw (5 1) -- (6 0);
		\draw (5 1) -- (4 0);
		\draw (3 7) -- (4 6);
		\draw (2 5) -- (3 4);
		\draw (2 5) -- (3 6);
		\draw (2 5) -- (1 4);
		\draw (0 3) -- (1 2);
		\draw (0 3) -- (0 2);
		\draw (0 3) -- (1 4);
		\draw (0 3) -- (0 4);
		\draw (7 2) -- (6 3);
		\draw (7 2) -- (6 1);
		\draw (4 0) -- (3 1);
		\draw (4 0) -- (5 0);
		\draw (1 2) -- (0 1);
		\draw (1 2) -- (2 3);
		\draw (1 2) -- (2 1);
		\draw (6 7) -- (7 6);
		\draw (2 0) -- (1 0);
		\draw (2 0) -- (3 1);
		\draw (7 6) -- (6 5);
		\draw (6 3) -- (7 4);
		\draw (6 3) -- (5 2);
		\draw (1 5) -- (0 6);
		\draw (1 5) -- (2 4);
		\draw (1 5) -- (0 4);
		\draw (3 6) -- (2 7);
		\draw (3 6) -- (4 5);
		\draw (5 7) -- (4 6);
		\draw (5 3) -- (4 2);
		\draw (5 3) -- (6 2);
		\draw (4 1) -- (3 2);
		\draw (4 1) -- (5 2);
		\draw (4 1) -- (5 0);
		\draw (3 2) -- (2 3);
		\draw (3 2) -- (4 3);
		\draw (3 2) -- (2 1);
		\draw (5 0) -- (6 1);
		\draw (5 0) -- (6 0);
		\draw (7 1) -- (6 2);
		\draw (7 1) -- (6 0);
		\draw (4 5) -- (3 4);
		\draw (0 4) -- (0 5);
		\draw (1 4) -- (2 3);
		\draw (1 4) -- (0 5);
		\draw (6 0) -- (7 0);
		\draw (2 3) -- (3 4);
		\draw (2 1) -- (1 0);
		\draw (4 2) -- (3 1);
		\draw (1 0) -- (0 1);
		\draw (6 5) -- (7 4);
		\draw (3 5) -- (2 4);
		\draw (3 5) -- (4 6);
		\draw (0 1) -- (0 2);
		\draw (7 0) -- (6 1);
		\draw (5 2) -- (6 1);
		\draw (5 2) -- (4 3);
		\draw (0 6) -- (0 5);
		\draw (0 6) -- (1 7);
		\draw (4 3) -- (3 4);
	\end{scope}
\end{scope}

\begin{scope}[xshift=12cm]
	\begin{scope}
		\foreach \i in {0,...,7}{
			\node[P] (\i) at (0,\i) {};
		}
		\draw (7)--(6)--(5)--(4)--(3)--(2)--(1)  edge[-,in = 135, out = 225, loop, distance=1cm] (1)--(0);
		\node at (0,-1) {$H$};
	\end{scope}
	
	\begin{scope}[xshift=1.5cm]
		\node at (3.5,-1) {$H^{\bst}$};
		\node[B] (7 3) at (7,3) {};
		\node[W] (4 7) at (4,7) {};
		\node[W] (1 3) at (1,3) {};
		\node[W] (6 4) at (6,4) {};
		\node[B] (3 0) at (3,0) {};
		\node[B] (5 4) at (5,4) {};
		\node[W] (0 7) at (0,7) {};
		\node[B] (5 6) at (5,6) {};
		\node[B] (2 6) at (2,6) {};
		\node[B] (1 6) at (1,6) {};
		\node[B] (5 1) at (5,1) {};
		\node[W] (3 7) at (3,7) {};
		\node[W] (2 5) at (2,5) {};
		\node[W] (0 3) at (0,3) {};
		\node[B] (7 2) at (7,2) {};
		\node[W] (4 0) at (4,0) {};
		\node[B] (1 2) at (1,2) {};
		\node[W] (6 7) at (6,7) {};
		\node[B] (3 3) at (3,3) {};
		\node[W] (2 0) at (2,0) {};
		\node[B] (7 6) at (7,6) {};
		\node[B] (4 4) at (4,4) {};
		\node[W] (6 3) at (6,3) {};
		\node[W] (1 5) at (1,5) {};
		\node[B] (3 6) at (3,6) {};
		\node[B] (2 2) at (2,2) {};
		\node[B] (7 7) at (7,7) {};
		\node[W] (5 7) at (5,7) {};
		\node[B] (5 3) at (5,3) {};
		\node[W] (4 1) at (4,1) {};
		\node[B] (1 1) at (1,1) {};
		\node[W] (2 7) at (2,7) {};
		\node[B] (3 2) at (3,2) {};
		\node[B] (0 0) at (0,0) {};
		\node[B] (6 6) at (6,6) {};
		\node[B] (5 0) at (5,0) {};
		\node[B] (7 1) at (7,1) {};
		\node[W] (4 5) at (4,5) {};
		\node[B] (0 4) at (0,4) {};
		\node[B] (5 5) at (5,5) {};
		\node[B] (1 4) at (1,4) {};
		\node[W] (6 0) at (6,0) {};
		\node[B] (7 5) at (7,5) {};
		\node[W] (2 3) at (2,3) {};
		\node[W] (2 1) at (2,1) {};
		\node[W] (4 2) at (4,2) {};
		\node[B] (1 0) at (1,0) {};
		\node[W] (6 5) at (6,5) {};
		\node[W] (3 5) at (3,5) {};
		\node[W] (0 1) at (0,1) {};
		\node[B] (7 0) at (7,0) {};
		\node[B] (4 6) at (4,6) {};
		\node[B] (5 2) at (5,2) {};
		\node[W] (6 1) at (6,1) {};
		\node[B] (3 1) at (3,1) {};
		\node[B] (0 2) at (0,2) {};
		\node[B] (7 4) at (7,4) {};
		\node[B] (0 6) at (0,6) {};
		\node[W] (6 2) at (6,2) {};
		\node[W] (4 3) at (4,3) {};
		\node[W] (1 7) at (1,7) {};
		\node[W] (0 5) at (0,5) {};
		\node[B] (3 4) at (3,4) {};
		\node[B] (2 4) at (2,4) {};
		\draw (7 3) -- (6 4);
		\draw (7 3) -- (6 2);
		\draw (4 7) -- (5 6);
		\draw (4 7) -- (3 6);
		\draw (1 3) -- (1 2);
		\draw (1 3) -- (2 4);
		\draw (1 3) -- (0 2);
		\draw (1 3) -- (1 4);
		\draw (1 3) -- (0 4);
		\draw (6 4) -- (7 5);
		\draw (6 4) -- (5 3);
		\draw (3 0) -- (4 1);
		\draw (3 0) -- (2 1);
		\draw (5 4) -- (4 5);
		\draw (5 4) -- (6 3);
		\draw (5 4) -- (6 5);
		\draw (5 4) -- (4 3);
		\draw (0 7) -- (1 6);
		\draw (5 6) -- (4 5);
		\draw (5 6) -- (6 7);
		\draw (5 6) -- (6 5);
		\draw (2 6) -- (1 5);
		\draw (2 6) -- (3 5);
		\draw (2 6) -- (3 7);
		\draw (2 6) -- (1 7);
		\draw (1 6) -- (2 7);
		\draw (1 6) -- (1 5);
		\draw (1 6) -- (0 5);
		\draw (1 6) -- (1 7);
		\draw (1 6) -- (2 5);
		\draw (5 1) -- (6 1);
		\draw (5 1) -- (6 0);
		\draw (5 1) -- (6 2);
		\draw (5 1) -- (4 2);
		\draw (5 1) -- (4 1);
		\draw (5 1) -- (4 0);
		\draw (3 7) -- (4 6);
		\draw (2 5) -- (3 4);
		\draw (2 5) -- (3 6);
		\draw (2 5) -- (1 4);
		\draw (0 3) -- (1 2);
		\draw (0 3) -- (1 4);
		\draw (7 2) -- (6 3);
		\draw (7 2) -- (6 1);
		\draw (4 0) -- (3 1);
		\draw (1 2) -- (0 1);
		\draw (1 2) -- (2 3);
		\draw (1 2) -- (2 1);
		\draw (6 7) -- (7 6);
		\draw (2 0) -- (3 1);
		\draw (7 6) -- (6 5);
		\draw (6 3) -- (7 4);
		\draw (6 3) -- (5 2);
		\draw (1 5) -- (1 4);
		\draw (1 5) -- (0 6);
		\draw (1 5) -- (0 4);
		\draw (1 5) -- (2 4);
		\draw (3 6) -- (2 7);
		\draw (3 6) -- (4 5);
		\draw (5 7) -- (4 6);
		\draw (5 3) -- (4 2);
		\draw (5 3) -- (6 2);
		\draw (4 1) -- (3 2);
		\draw (4 1) -- (3 1);
		\draw (4 1) -- (5 0);
		\draw (4 1) -- (5 2);
		\draw (3 2) -- (2 3);
		\draw (3 2) -- (4 3);
		\draw (3 2) -- (2 1);
		\draw (5 0) -- (6 1);
		\draw (7 1) -- (6 1);
		\draw (7 1) -- (6 2);
		\draw (7 1) -- (6 0);
		\draw (4 5) -- (3 4);
		\draw (1 4) -- (2 3);
		\draw (1 4) -- (0 5);
		\draw (2 3) -- (3 4);
		\draw (2 1) -- (1 0);
		\draw (2 1) -- (3 1);
		\draw (4 2) -- (3 1);
		\draw (1 0) -- (0 1);
		\draw (6 5) -- (7 4);
		\draw (3 5) -- (2 4);
		\draw (3 5) -- (4 6);
		\draw (7 0) -- (6 1);
		\draw (5 2) -- (6 1);
		\draw (5 2) -- (4 3);
		\draw (0 6) -- (1 7);
		\draw (4 3) -- (3 4);
	\end{scope}
\end{scope}

\end{tikzpicture}

\end{center}
On the other hand, $P_5$ with a loop added to the middle vertex is not a bipartite swapping target, as seen by the odd cycle highlighted below.
\begin{center}
\begin{tikzpicture}[scale=.6]

	\begin{scope}
		\foreach \i in {0,...,4}{
			\node[P] (\i) at (0,\i) {};
		}
		\draw (4)--(3)--(2) edge[-,in = 135, out = 225, loop, distance=1cm] (2)--(1)--(0);
		\node at (0,-1) {$H$};
	\end{scope}
	
	\begin{scope}[xshift=1.5cm]
		\node at (2,-1) {$H^{\bst}$};
		\node[P] (1 3) at (1,3) {};
		\node[P] (3 0) at (3,0) {};
		\node[P] (2 1) at (2,1) {};
		\node[P] (0 3) at (0,3) {};
		\node[P] (4 0) at (4,0) {};
		\node[P] (1 2) at (1,2) {};
		\node[P] (3 3) at (3,3) {};
		\node[P] (4 4) at (4,4) {};
		\node[P] (2 2) at (2,2) {};
		\node[P] (4 1) at (4,1) {};
		\node[P] (1 1) at (1,1) {};
		\node[P] (3 2) at (3,2) {};
		\node[P] (0 0) at (0,0) {};
		\node[P] (0 4) at (0,4) {};
		\node[P] (1 4) at (1,4) {};
		\node[P] (2 3) at (2,3) {};
		\node[P] (4 2) at (4,2) {};
		\node[P] (1 0) at (1,0) {};
		\node[P] (0 1) at (0,1) {};
		\node[P] (3 1) at (3,1) {};
		\node[P] (2 4) at (2,4) {};
		\node[P] (2 0) at (2,0) {};
		\node[P] (4 3) at (4,3) {};
		\node[P] (3 4) at (3,4) {};
		\node[P] (0 2) at (0,2) {};
		\draw (1 3) -- (2 4);
		\draw (1 3) -- (0 2);
		\draw (1 3) -- (0 4);
		\draw (3 0) -- (4 1);
		\draw (3 0) -- (2 1);
		\draw (2 1) -- (1 2);
		\draw (2 1) -- (2 0);
		\draw (2 1) -- (1 0);
		\draw (2 1) -- (3 2);
		\draw (0 3) -- (1 2);
		\draw (0 3) -- (1 4);
		\draw (4 0) -- (3 1);
		\draw (1 2) -- (0 1);
		\draw (1 2) -- (2 3);
		\draw (1 2) -- (0 2);
		\draw (4 1) -- (3 2);
		\draw (3 2) -- (2 3);
		\draw (3 2) -- (4 3);
		\draw (3 2) -- (4 2);
		\draw (1 4) -- (2 3);
		\draw (2 3) -- (3 4);
		\draw (2 3) -- (2 4);
		\draw (4 2) -- (3 1);
		\draw (1 0) -- (0 1);
		\draw (3 1) -- (2 0);
		\draw (4 3) -- (3 4);
		\draw[ultra thick] (0 2) -- (1 2) -- (2 3) -- (2 4) -- (1 3) --  (0 2);
	\end{scope}

\end{tikzpicture}

\end{center}
Any $P_k$ with single loop added at vertex $i \notin \{1,2,k-1,k-2\}$ cannot be a bipartite swapping target as it contains the above $H$ as an induced subgraph.
\end{example}

The above method does not extend to $H= K_q$, corresponding to the number of proper $q$-colorings. Nonetheless the analogous strengthening of Conjecture~\ref{conj:coloring} is conjectured to hold.

\begin{conjecture}[\cite{Zhao11}] \label{conj:color-bip}
For every graph $G$ and every $q \ge 3$,
\[
\hom(G, K_q)^2 \le \hom(G \x K_2, K_q).
\]
\end{conjecture}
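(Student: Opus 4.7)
The plan is to try to extend the bipartite swapping trick of Lemma~\ref{lem:indep-bip} to proper $q$-colorings, aiming to construct an injection from proper $q$-colorings of $2G$ (two disjoint copies of $G$) to proper $q$-colorings of $G \x K_2$. Both graphs share the vertex set $V(G) \times \{0,1\}$; the edges of $2G$ are $u_i v_i$ for $uv \in E(G)$ and $i \in \{0, 1\}$, while the edges of $G \x K_2$ are $u_0 v_1$. Such an injection would give $\hom(G, K_q)^2 = \hom(2G, K_q) \le \hom(G \x K_2, K_q)$ immediately.

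Given a proper $q$-coloring $\phi$ of $2G$, say that an edge $uv \in E(G)$ is \emph{bad} if $\phi(u_0) = \phi(v_1)$ or $\phi(u_1) = \phi(v_0)$, and let $H_\bad(\phi)$ denote the subgraph of $G$ consisting of the bad edges. A direct check shows that for any $A \subseteq V(G)$, the coloring $\psi$ obtained from $\phi$ by swapping $\phi(v_0) \leftrightarrow \phi(v_1)$ at every $v \in A$ is a proper coloring of $G \x K_2$ if and only if every bad edge has exactly one endpoint in $A$, i.e.\ $A$ is a bipartition class of $H_\bad(\phi)$. Thus, provided that $H_\bad(\phi)$ is always bipartite, the proof of Lemma~\ref{lem:indep-bip} adapts essentially verbatim: fix an ordering on the subsets of $V(G)$, let $A$ be the lexicographically first bipartition class of $H_\bad(\phi)$, and recover $\phi$ from $\psi$ by extracting the same bad-edge set from $\psi$ and undoing the swap.

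The hard part is that this bipartiteness can fail when $q \ge 3$. For $q = 2$ it holds automatically, because the non-loop color is forced in a complementary way on the $(v_0, v_1)$ pair at each endpoint of a bad edge, giving precisely the canonical bipartition used in Lemma~\ref{lem:indep-bip}. But already for $q = 3$ there are examples where $H_\bad(\phi)$ contains an odd cycle: take $G = K_3$ on vertices $\{a, b, c\}$ and let $\phi(a_0, b_0, c_0) = (1, 2, 3)$ and $\phi(a_1, b_1, c_1) = (2, 3, 1)$; then $\phi$ is a proper $3$-coloring of $2K_3$, yet a short check shows that every edge of $K_3$ is bad, so $H_\bad(\phi) = K_3$. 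No choice of $A$ repairs the bad edges via transpositions alone, and the naive swap breaks irrecoverably.

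To push the approach through, one would need a richer family of local transformations at each vertex $v$ --- for example an arbitrary permutation $\sigma_v \in S_q$ applied to the pair $(\phi(v_0), \phi(v_1))$ --- together with a canonical rule for choosing $(\sigma_v)_{v \in V(G)}$ that both repairs every bad edge and is invertible from the output. An alternative route is to reformulate the inequality as $\hom(G, H_0) \le \hom(G, H_1)$, where $H_0 = K_q \times K_q$ (the tensor product, which is loopless) and $H_1$ is the loop-graph on $[q]^2$ with edges $(a,b)(c,d)$ iff $a \ne d$ and $b \ne c$ (so $H_1$ also has a loop at every $(a,b)$ with $a \ne b$), and then attempt a direct comparison of the two targets --- via a weighted-homomorphism domination, an entropy or projection inequality in the spirit of Section~\ref{sec:proj}, or the occupancy-fraction differential method of Section~\ref{sec:occup}. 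My expectation is that the central difficulty lies precisely at the step where bipartiteness of $H_\bad$ fails, and that circumventing it will require a genuinely new combinatorial identity relating colorings of $2G$ and of $G \x K_2$ beyond the techniques surveyed here.
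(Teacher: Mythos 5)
You should be aware at the outset that this statement is not a theorem of the paper: it is stated as an open conjecture (Conjecture~\ref{conj:color-bip}, from \cite{Zhao11}), the paper gives no proof, and it explicitly remarks that ``the above method does not extend to $H=K_q$.'' The only things known are that the inequality holds when $q$ is sufficiently large as a function of $G$ \cite{Zhao11}, and that the weaker Conjecture~\ref{conj:coloring} (which this one implies) has been verified for $d=3$ by a completely different method \cite{DJPR3}. So there is no proof in the paper to compare your attempt against, and you were right not to claim one.

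Within those limits, your analysis is accurate and matches the paper's own diagnosis. Your verification that the swap at a set $A$ yields a proper coloring of $G\x K_2$ exactly when every bad edge of $H_\bad(\phi)$ has one endpoint in $A$ is correct, and this is precisely the content of the ``bipartite swapping target'' condition of Section~\ref{sec:swap}: the method of Lemma~\ref{lem:indep-bip} and Theorem~\ref{thm:bst} goes through if and only if the relevant bad-edge graph is always bipartite, which for target $H$ is encoded by bipartiteness of $H^\bst$. Your $K_3$ example with the cyclic shift $(1,2,3)\mapsto(2,3,1)$ is a valid witness that this fails for $K_q$, $q\ge 3$ (it corresponds to an odd cycle in $K_3^{\bst}$), so $K_q$ is not a bipartite swapping target and transposition-only repairs cannot work. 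One small quibble: your justification for the $q=2$ case is phrased in the language of the independent-set setting; the cleaner statement is that for $q=2$ a proper coloring exists only when $G$ is bipartite, and then the bad-edge graph is a union of connected components of $G$, hence bipartite. Your closing suggestions (richer local permutations $\sigma_v\in S_q$ with a canonical invertible choice, or a direct comparison of the two targets on $[q]^2$) are reasonable directions, but as far as the paper is concerned the conjecture remains open for every $q\ge 3$, so the ``gap'' here is simply that no one, including the paper, knows how to close it.
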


Conjecture~\ref{conj:color-bip} implies Conjecture~\ref{conj:coloring}. It is known \cite{Zhao11} that Conjecture~\ref{conj:color-bip} holds when $q$ is sufficiently large as a function of $G$.

\section{Graph products and powers} \label{sec:prod}

We define several operations on (loop-)graphs.
\begin{itemize}
\item \emph{Tensor product} $G \x H$: its vertices are $V(G) \x V(H)$, with $(u,v)$ and $(u',v') \in V(G) \x V(H)$ adjacent in $G \x H$ if $uu' \in E(G)$ and $vv' \in E(H)$. This construction is also known as the \emph{categorical product}.
\item \emph{Exponentiation} $H^G$: its vertices are maps $f \colon V(G) \to V(H)$ (not necessarily homomorphisms), where $f$ and $f'$ are adjacent if $f(u)f'(v) \in E(H)$ whenever $uv \in E(G)$.
\item $G^\circ$: same as $G$ except that every vertex now has a loop.
\item $\ell(H)$: subgraph of $H$ induced by its looped vertices, or equivalently, delete all non-looped vertices from $H$.
\end{itemize}

\begin{example}
	The tensor product $G \x K_2$ (here $K_2 = \tikzKtwo$) is the bipartite double cover used in the previous section (see Figure~\ref{fig:swap}).
\end{example}

\begin{example}
	We have $\tikzHind \x \tikzHind = \tikzHindsquared$, with adjacency matrix 
	$\left(\begin{smallmatrix}
	1 & 1 & 1 & 1 \\
	1 & 0 & 1 & 0 \\
	1 & 1 & 0 & 0 \\
	1 & 0 & 0 & 0		
	\end{smallmatrix}\right)$,                  
	which can be obtained by taking the adjacency matrix      $\left(\begin{smallmatrix}
	1 & 1 \\
	1 & 0
	\end{smallmatrix}\right)$
	of $\tikzHind$ and then replacing each $1$ in the matrix by a copy of 
	$\left(\begin{smallmatrix}
	1 & 1 \\
	1 & 0
	\end{smallmatrix}\right)$
	and replacing each $0$ by a copy of 
	$\left(\begin{smallmatrix}
	0 & 0 \\
	0 & 0
	\end{smallmatrix}\right)$. More generally, the adjacency matrix of a tensor product of (loop-)graphs is the matrix tensor product of the adjacency matrices of the graphs.
\end{example}

\begin{example}\label{ex:wr-ind-power-loop}
For any loop-graph $H$, the graph $H^{K_2}$ has vertex set $V(H) \x V(H)$, with $(u,v)$ and  $(u',v') \in V(H) \x V(H)$ adjacent if and only if $uv', u'v \in E(H)$. In particular, if $H_\ind = \tikzHind$, then $H_\ind^{K_2} = \tikzHwrdown$.
\end{example}

Here are a few easy yet key facts relating the above operations with graph homomorphisms. The proofs are left as exercises for the readers.
\begin{equation}
  \label{eq:H-prod}
  \hom(G, H_1 \x H_2) = \hom(G, H_1) \hom(G, H_2)
\end{equation}
\begin{equation}
  \label{eq:H-power}
  \hom(G \x G', H) = \hom(G, H^{G'})
\end{equation}
\begin{equation}
  \label{eq:G-loop}
  \hom(G^\circ, H) = \hom(G, \ell(H)).
\end{equation}

\subsection{$K_{d+1}$ as maximizer}

Now we prove Theorem~\ref{thm:wr} concerning the Widom--Rowlinson model. Recall it says that for any $d$-regular graph $G$, we have
\[
\hom(G, \tikzHwr)^{1/v(G)} \le \hom(K_{d+1}, \tikzHwr)^{1/(d+1)}.
\]

\begin{proof}[Proof of Theorem~\ref{thm:wr}] 
  \cite{CCPT}
  We have $\tikzHwr = \ell(H_\ind^{K_2})$ (Example~\ref{ex:wr-ind-power-loop}). For any graph $G$,
\begin{equation} \label{eq:wr-ind}
\hom(G,\tikzHwr) = \hom(G,\ell(H_\ind^{K_2})) = \hom(G^\circ, H_\ind^{K_2}) = \hom(G^\circ \x K_2, \tikzHind).
\end{equation}
When $G$ is $d$-regular, $G^\circ \x K_2$ is a $(d+1)$-regular bipartite graph, so Theorem~\ref{thm:GT} (or Theorem~\ref{thm:kahn}) implies that the above quantity is at most $\hom(K_{d+1,d+1},\tikzHind)^{v(G)/(d+1)}$. Since $K_{d+1,d+1} = K_{d+1}^\circ \x K_2$, we have by~\eqref{eq:wr-ind},
\begin{align*}
\hom(G,\tikzHwr)^{1/v(G)} 
&= \hom(G^\circ \x K_2, \tikzHind)^{1/v(G)}
\\
&\le \hom(K_{d+1}^\circ \x K_2,\tikzHind)^{1/(d+1)}
\\
&
= \hom(K_{d+1}, \tikzHwr)^{1/(d+1)}. \qedhere
\end{align*}
\end{proof}

The above proof exploits the connection~\eqref{eq:wr-ind} between the hard-core model (independent sets) and the Widom--Rowlinson model. This relationship had been previously observed in \cite{BHW99}.
More generally, the above proof extends to give the following result.

\begin{theorem}[Sernau \cite{Ser}] \label{thm:Serneau-loop-power}
Let $H = \ell(A^B)$ where $A$ is any loop-graph and $B$ is a bipartite graph. For any $d$-regular graph $G$, 
\[
\hom(G, H)^{1/v(G)} \le \hom(K_{d+1},H)^{1/(d+1)}.
\]
\end{theorem}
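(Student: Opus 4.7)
The plan is to mimic the proof of Theorem~\ref{thm:wr} using a \emph{squaring trick} that handles the fact that $B$ need not be regular. The key observation is that, because $B$ is bipartite, the bipartite double cover $K_2 \x B$ splits into two disjoint copies of $B$, i.e., $K_2 \x B \cong B \sqcup B$.

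First I would establish the identity $\hom(G, H)^2 = \hom(G^\circ \x K_2, A^B)$. By \eqref{eq:H-prod}, $\hom(G,H)^2 = \hom(G, H \x H)$. A direct check on vertices and edges shows that $\ell$ commutes with the tensor product of loop-graphs, so $H \x H = \ell(A^B) \x \ell(A^B) = \ell(A^B \x A^B)$. The standard identities $A^{G_1 \sqcup G_2} = A^{G_1} \x A^{G_2}$ and the currying identity $A^{C \x B} = (A^B)^C$, together with $K_2 \x B \cong B \sqcup B$, yield $A^B \x A^B = A^{K_2 \x B} = (A^B)^{K_2}$. Combining with \eqref{eq:G-loop} and \eqref{eq:H-power} gives
\[
\hom(G, H \x H) = \hom(G, \ell((A^B)^{K_2})) = \hom(G^\circ, (A^B)^{K_2}) = \hom(G^\circ \x K_2, A^B).
\]

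With this identity in hand, since $G$ is $d$-regular the graph $G^\circ \x K_2$ is $(d+1)$-regular bipartite, so Theorem~\ref{thm:GT} applied with target $A^B$ gives
\[
\hom(G^\circ \x K_2, A^B)^{1/(2v(G))} \le \hom(K_{d+1,d+1}, A^B)^{1/(2(d+1))}.
\]
Running the same chain of identities with $K_{d+1}$ in place of $G$, and using $K_{d+1,d+1} = K_{d+1}^\circ \x K_2$, yields $\hom(K_{d+1,d+1}, A^B) = \hom(K_{d+1}, H)^2$. Substituting and taking square roots produces the desired inequality.

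The main obstacle the squaring trick circumvents is the following. The naive approach---applying Theorem~\ref{thm:GT} directly to the bipartite graph $G^\circ \x B$---fails: for general bipartite $B$, $G^\circ \x B$ need not be regular, and even when $B$ is $r$-regular the extremum produced by Theorem~\ref{thm:GT} is $K_{(d+1)r,(d+1)r}$, which is not $K_{d+1}^\circ \x B$ in general. Forcing a factor of $K_2$ into the picture via squaring repairs both issues at once: $G^\circ \x K_2$ is automatically $(d+1)$-regular bipartite, and its extremum $K_{d+1,d+1}$ happens to equal $K_{d+1}^\circ \x K_2$, so the right-hand side folds back to $\hom(K_{d+1}, H)^2$ via the same chain of identities.
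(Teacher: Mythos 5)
Your proof is correct and takes essentially the same route as the paper: both arguments hinge on $B \x K_2 \cong B \sqcup B$ for bipartite $B$, reduce everything to Theorem~\ref{thm:GT} applied to the $(d+1)$-regular bipartite graph $G^\circ \x K_2$ with target $A^B$, and fold the right-hand side back using $K_{d+1,d+1} = K_{d+1}^\circ \x K_2$. The only difference is bookkeeping: the paper obtains the identity $\hom(G,H)^2 = \hom(G^\circ \x K_2, A^B)$ by moving $B$ across the adjunction \eqref{eq:H-power}, writing $\hom(G^\circ, A^B) = \hom(G^\circ \x B, A) = \hom(G^\circ \x B \x K_2, A)^{1/2}$, whereas you square on the target side via $\ell(A^B) \x \ell(A^B) = \ell((A^B)^{K_2})$ — the same computation read in the opposite direction.
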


\begin{proof}
For every bipartite graph $B$, any product of the form $G \x B$ is bipartite, and furthermore $B \x K_2$ is two disjoint copies of $B$. For $A$ and $B$ as in the hypothesis of the theorem, we have, for any graph $G$,
\begin{align*}
\hom(G, \ell(A^B)) 
&= \hom(G^\circ, A^B)
= \hom(G^\circ \x B, A)
\\
&= \hom(G^\circ \x B \x K_2, A)^{1/2}
= \hom(G^\circ \x K_2, A^B)^{1/2}.
\end{align*}
Since $G$ is $d$-regular, $G^\circ \x K_2$ is a $(d+1)$-regular bipartite graph, so Theorem~\ref{thm:GT} implies that (recall $K_{d+1}^\circ \x K_2 \cong K_{d+1,d+1}$)
\begin{align*}
\hom(G, \ell(A^B))^{1/v(G)} 
&= \hom(G^\circ \x K_2, A^B)^{1/(2v(G))}
\\
&\le \hom(K_{d+1}^\circ \x K_2, A^B)^{1/(2d+2)}
\\
&= \hom(K_{d+1},\ell(A^B))^{1/(d+1)}. \qedhere
\end{align*}
\end{proof}

One can extend Theorem~\ref{thm:Serneau-loop-power} by considering a bigraph version of Theorem~\ref{thm:GT}. A \emph{bigraph} $G$ is a bipartite graph $G$ along with a specified left/right-vertex bipartition $V(G) = V_L(G) \cup V_R(G)$. Given two bigraphs $G$ and $H$, a  homomorphism $\phi$ from $G$ to $H$ is a graph homomorphism of the underlying graphs that respects the vertex bipartition, i.e., $\phi(V_L(G)) \subseteq V_L(H)$ and $\phi(V_R(G)) \subseteq V_R(H)$. Theorem~\ref{thm:GT}, with essentially the same proof, holds for bigraphs $G$ and $H$. The proof of Theorem~\ref{thm:Serneau-loop-power} can then be easily modified to establish the following result.

\begin{theorem} \label{thm:H-bigraph-hom}
	Let $A$ and $B$ be two bigraphs. Let $H$ denote the loop-graph with vertices being bigraph homomorphisms from $B$ to $A$, such that $\phi, \phi' \in V(H)$ are adjacent if and only if $\phi(u)\phi'(v) \in E(A)$ whenever $uv \in E(B)$ (in particular, all vertices of $H$ are automatically looped). Then for any $d$-regular graph $G$, one has
	\[
	\hom(G,H)^{1/v(G)} \le \hom(K_{d+1}, H)^{1/(d+1)}.
	\]
\end{theorem}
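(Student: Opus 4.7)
The plan is to mimic Sernau's proof of Theorem~\ref{thm:Serneau-loop-power} in the bigraph category, invoking the bigraph version of Theorem~\ref{thm:GT} mentioned just before the statement. Throughout, let $\hom_{\mathrm{big}}(X,Y)$ denote the number of bigraph homomorphisms from $X$ to $Y$.

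First, I unpack $\hom(G,H)$: a homomorphism $\psi \colon G \to H$ is the same data as an assignment $v \mapsto \phi_v$ of each $v \in V(G)$ to a bigraph homomorphism $\phi_v \colon B \to A$, subject to $\phi_v(u)\phi_{v'}(w) \in E(A)$ whenever $vv' \in E(G)$ and $uw \in E(B)$. Assembling the $\phi_v$ into a single map $\Phi \colon V(G) \x V(B) \to V(A)$ via $\Phi(v,u) := \phi_v(u)$, and endowing the graph tensor product $G^\circ \x B$ with the bigraph structure inherited from $B$ (with $V_L := V(G) \x V_L(B)$ and $V_R := V(G) \x V_R(B)$), these conditions translate exactly into $\Phi$ being a bigraph homomorphism $G^\circ \x B \to A$. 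Hence $\hom(G,H) = \hom_{\mathrm{big}}(G^\circ \x B, A)$.

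Next, introduce $G' := G^\circ \x K_2$, a $(d+1)$-regular bipartite graph, viewed as a bigraph with $V_L(G') := V(G) \x \{0\}$ and $V_R(G') := V(G) \x \{1\}$. A direct check shows that $G^\circ \x B$ is isomorphic, as a bigraph, to the bigraph tensor product of $G'$ with $B$ (on vertex set $V_L(G') \x V_L(B) \cup V_R(G') \x V_R(B)$ with edges $(v,0,u)(v',1,u')$ iff $vv' \in E(G^\circ)$ and $uu' \in E(B)$), via $(v,i,u) \mapsto (v,u)$. Correspondingly, introduce the bigraph exponential $A^B_{\mathrm{big}}$ with $V_L := \{\text{maps } V_L(B) \to V_L(A)\}$, $V_R := \{\text{maps } V_R(B) \to V_R(A)\}$, and $f_L \sim f_R$ iff $f_L(u) f_R(u') \in E(A)$ for every $uu' \in E(B)$ with $u \in V_L(B)$. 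The tensor-exponential adjunction in the bigraph category then gives $\hom_{\mathrm{big}}(G^\circ \x B, A) = \hom_{\mathrm{big}}(G', A^B_{\mathrm{big}})$.

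Finally, apply the bigraph version of Theorem~\ref{thm:GT} to the $(d+1)$-biregular bigraph $G'$ and the bigraph $A^B_{\mathrm{big}}$: the count $\hom_{\mathrm{big}}(G', A^B_{\mathrm{big}})$ is at most $\hom_{\mathrm{big}}(K_{d+1,d+1}, A^B_{\mathrm{big}})$ raised to the power $v(G')/(2(d+1)) = v(G)/(d+1)$. Repeating the chain of identifications with $G$ replaced by $K_{d+1}$, and using $K_{d+1}^\circ \x K_2 \cong K_{d+1,d+1}$, shows $\hom_{\mathrm{big}}(K_{d+1,d+1}, A^B_{\mathrm{big}}) = \hom(K_{d+1},H)$, yielding the desired $\hom(G,H)^{1/v(G)} \le \hom(K_{d+1},H)^{1/(d+1)}$. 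The main obstacle is purely bookkeeping: defining the bigraph tensor, bigraph exponential, and their adjunction in a way compatible with the bigraph form of Theorem~\ref{thm:GT}; once this framework is laid out, each step is a direct parallel of Sernau's argument in the plain-graph setting.
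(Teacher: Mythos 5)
Your argument is correct and is essentially the modification of Sernau's proof of Theorem~\ref{thm:Serneau-loop-power} that the paper has in mind: identify $\hom(G,H)$ with bigraph homomorphisms out of the $(d+1)$-regular bipartite bigraph $G^\circ \times K_2$ via a bigraph tensor--exponential adjunction, apply the bigraph version of Theorem~\ref{thm:GT}, and conclude using $K_{d+1}^\circ \times K_2 \cong K_{d+1,d+1}$. The only cosmetic difference is that in the bigraph setting the doubling step $\hom(G^\circ \times B \times K_2, A)^{1/2}$ from the original proof becomes unnecessary, exactly as your curried formulation shows.
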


It may not be obvious which $H$ can arise in Theorem~\ref{thm:H-bigraph-hom}. The following special case, proved in \cite{CCPT} (prior to \cite{Ser}), provides some a nice family of examples.

\begin{definition} \label{def:extended-line-graph}
The \emph{extended line graph} $\wt H$ of a graph $H$ has $V(\wt H) = E(H)$ and two edges $e$ and $f$ of $H$ are adjacent in $\wt H$ if
\begin{enumerate}
\item $e = f$, or
\item $e$ and $f$ share a common vertex, or
\item $e$ and $f$ are opposite edges of a $4$-cycle in $G$.
\end{enumerate}
\end{definition}

Note that every vertex of $\wt H$ is automatically looped. If $B$ is a bipartite graph, then the graph $H$ in Theorem~\ref{thm:H-bigraph-hom} that arises from $A = K_2$ and $B$ is precisely $\wt B$.

\begin{corollary}[\cite{CCPT}] \label{cor:H-line-graph}
  Let $\wt H$ be the extended line graph of a bipartite graph $H$. For any $d$-regular graph $G$,
\[
\hom(G,\wt H)^{1/v(G)} \le \hom(K_{d+1}, \wt H)^{n/(d+1)}.
\]
\end{corollary}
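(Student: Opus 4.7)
The plan is to realize $\wt H$ as the loop-graph produced by Theorem~\ref{thm:H-bigraph-hom} for a suitable pair of bigraphs, after which the result follows by a direct invocation of that theorem. Fix a bipartition $V(H) = V_L(H) \cup V_R(H)$ of the bipartite graph $H$ and view $H$ as a bigraph; also view $K_2$ as a bigraph with a single vertex $L$ on the left, a single vertex $R$ on the right, and the edge $LR$. I will apply Theorem~\ref{thm:H-bigraph-hom} with target $A = H$ and source $B = K_2$.

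First, I would identify the vertex set of the produced loop-graph with $V(\wt H) = E(H)$. A bigraph homomorphism $\phi \colon K_2 \to H$ is determined by the pair $(\phi(L), \phi(R)) \in V_L(H) \x V_R(H)$, and the homomorphism condition is precisely $\phi(L)\phi(R) \in E(H)$; thus $\phi \mapsto \phi(L)\phi(R)$ is a bijection between the set of such $\phi$ and $E(H)$.

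Next, I would verify that the adjacency relation from Theorem~\ref{thm:H-bigraph-hom} agrees with that of $\wt H$. For homomorphisms $\phi, \phi'$ corresponding to edges $e = uv$ and $f = u'v'$ (with $u, u' \in V_L(H)$ and $v, v' \in V_R(H)$), the theorem's condition ``$\phi(x)\phi'(y) \in E(H)$ whenever $xy \in E(K_2)$'' reads simply $uv' \in E(H)$ and $u'v \in E(H)$, since the sole edge $LR$ of $K_2$ must be checked in both orientations. A short case analysis then shows that this is equivalent to the three clauses defining adjacency in $\wt H$: if $u = u'$ or $v = v'$ the condition is automatic from $uv, u'v' \in E(H)$ and corresponds to the ``$e = f$'' or ``shared vertex'' branches; if $u, u', v, v'$ are four distinct vertices, the condition $\{uv, u'v', uv', u'v\} \subseteq E(H)$ is precisely that these four vertices span a 4-cycle in $H$ with $e, f$ as opposite edges.

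With this identification in place, the conclusion of Theorem~\ref{thm:H-bigraph-hom} yields exactly
\[
\hom(G, \wt H)^{1/v(G)} \le \hom(K_{d+1}, \wt H)^{1/(d+1)}.
\]
The only substantive step is the adjacency case analysis verifying that the pair of conditions $uv', u'v \in E(H)$ packages together the ``equal,'' ``share a vertex,'' and ``opposite in a 4-cycle'' clauses, and this is routine once the bipartiteness of $H$ is used to deduce that when no coordinates coincide the four vertices $u, u', v, v'$ are automatically distinct.
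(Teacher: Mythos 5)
Your proof is correct and is essentially the paper's own argument: the corollary is an immediate specialization of Theorem~\ref{thm:H-bigraph-hom}, obtained by recognizing $\wt H$ as the loop-graph built there from $K_2$ and the bipartite graph $H$ (and the exponent $n/(d+1)$ in the stated corollary is a typo for $1/(d+1)$, which is what you prove). One remark: your role assignment---source $B = K_2$, target $A = H$, so that the vertices are bigraph homomorphisms from $B$ to $A$, i.e.\ the edges of $H$---is the one consistent with the theorem as stated, whereas the paper's sentence ``arises from $A = K_2$ and $B$'' has the two roles swapped (taken literally it would give a single looped vertex), so your careful identification in fact corrects a small slip in the text.
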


For a simple graph $H$, let $H^\circ$ denote $H$ with a loop added at every vertex. Let $P_k$ denote the path of $k$ vertices, and $C_k$ the cycle with $k$ vertices.

\begin{example}
One has $\wt P_{k+1} = P_{k}^\circ$ for all $k$. Also, $\wt C_k = C_k^\circ$ for all $k \ne 4$.
\end{example}

\begin{corollary}[\cite{CCPT}]
Let $H = C_k^\circ$ with even $k \ge 6$ or $H = P_k^\circ$ for any $k \ge 1$. For any $d$-regular graph $G$,
\[
\hom(G, H)^{1/v(G)} \le \hom(K_{d+1},H)^{1/v(G)}.
\]
\end{corollary}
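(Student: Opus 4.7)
The plan is to derive this corollary directly from Corollary~\ref{cor:H-line-graph}, which handles extended line graphs of bipartite source graphs. The key observation, already flagged in the example preceding the statement, is that both families $P_k^\circ$ and $C_k^\circ$ arise as extended line graphs of bipartite graphs, so the work essentially reduces to unpacking the identifications $\wt P_{k+1} = P_k^\circ$ and $\wt C_k = C_k^\circ$ (for $k \ne 4$).

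First I would verify $\wt P_{k+1} = P_k^\circ$. The path $P_{k+1}$ has $k$ edges, so $\wt P_{k+1}$ has $k$ vertices, each automatically looped by clause (1) of Definition~\ref{def:extended-line-graph}. Since $P_{k+1}$ contains no $4$-cycle, clause (3) contributes nothing, so two edges of $P_{k+1}$ are adjacent in $\wt P_{k+1}$ exactly when they share a vertex, i.e., when they are consecutive along the path; this gives $P_k^\circ$. The analogous check for $\wt C_k = C_k^\circ$ when $k \ne 4$ proceeds identically: $C_k$ has no $4$-cycle provided $k \ne 4$, so only clauses (1) and (2) contribute and consecutive edges around $C_k$ form $C_k^\circ$.

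Having established these identifications, the proof concludes in one line for each family. For $H = P_k^\circ = \wt P_{k+1}$, the bigraph $P_{k+1}$ is bipartite, so Corollary~\ref{cor:H-line-graph} applies. For $H = C_k^\circ = \wt C_k$ with even $k \ge 6$, the constraint $k$ even makes $C_k$ bipartite and the constraint $k \ge 6$ ensures $k \ne 4$ so that the identification $\wt C_k = C_k^\circ$ is valid; again Corollary~\ref{cor:H-line-graph} applies. In both cases we obtain $\hom(G,H)^{1/v(G)} \le \hom(K_{d+1},H)^{1/(d+1)}$ for any $d$-regular graph $G$.

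There is no substantive obstacle here beyond the bookkeeping verification of the two graph identities; the deep content lives in Corollary~\ref{cor:H-line-graph} (and hence in Theorem~\ref{thm:H-bigraph-hom} and Theorem~\ref{thm:GT}), and the only mild subtlety to flag is precisely why $k = 4$ is excluded in the cycle case, namely that $\wt C_4$ acquires extra adjacencies from clause (3) of Definition~\ref{def:extended-line-graph} and is therefore strictly larger than $C_4^\circ$.
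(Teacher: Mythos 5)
Your derivation is correct and is exactly the route the paper intends: the corollary is meant to follow from Corollary~\ref{cor:H-line-graph} together with the identifications $\wt P_{k+1} = P_k^\circ$ and $\wt C_k = C_k^\circ$ ($k \ne 4$) stated in the preceding example, with bipartiteness of $P_{k+1}$ and of $C_k$ for even $k$, and the exclusion of $k=4$ handled just as you note. You also correctly state the conclusion with exponent $1/(d+1)$ on the right-hand side, which fixes an evident typo ($1/v(G)$) in the paper's statement.
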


\subsection{Closure under tensor products} \label{sec:closure-tensor}

Sernau~\cite{Ser} observed that, for any $d$, if $H = H_1$ and $H = H_2$ both have the property that $G = K_{d,d}$ maximizes $\hom(G,H)^{1/v(G)}$ over all $d$-regular graphs $G$, then $H = H_1 \x H_2$ has the same property by \eqref{eq:H-prod}. In other words, the set of $H$ such that $G = K_{d,d}$ is the maximizer in Problem~\ref{prb:hom-max} is closed under tensor products. This observation enlarges the set of such $H$ previously obtained in Theorems~\ref{thm:threshold} and \ref{thm:bst}.

Similarly, the set of $H$ such that $G = K_{d+1}$ maximizes the expression $\hom(G,H)^{1/v(G)}$ over $d$-regular graphs $G$ is also closed under tensor products. However, the loop-graphs $H$ that arise in Theorem~\ref{thm:H-bigraph-hom} are already closed under taking tensor products, so no new cases are obtained via taking the tensor product closure.

\section{Neither $K_{d,d}$ nor $K_{d+1}$} \label{sec:neither}

So far in all cases of Problem~\ref{prb:hom-max} that we have considered, the maximizing $G$ is always either $K_{d,d}$ or $K_{d+1}$. It was conjectured~\cite{Gal13} that one of $K_{d,d}$ and $K_{d+1}$ always maximizes $\hom(G, H)^{1/v(G)}$ for every $H$. However, Sernau~\cite{Ser} showed that this is false (a similar construction was independently found by Pat Devlin).

Let $d \ge 4$ and let $G$ be a $d$-regular graph with $v(G) < 2d$ other than $K_{d+1}$. Brooks' theorem tells us that $G$ is $d$-colorable, so that $\hom(G, K_d) > 0$. It follows that for this $G$, 
\begin{align*}
\hom(G, k K_d)^{1/v(G)} 
&= k^{1/v(G)} \hom(G,K_d)^{1/v(G)} 
\\
&> k^{1/(2d)} \hom(K_{d,d},K_d)^{1/(2d)} =
 \hom(K_{d,d},k K_d)^{1/(2d)}
\end{align*}
for sufficiently large $k$ (as a function of $d$) since $v(G) < 2d$. Also,
\[
\hom(G, k K_d)^{1/v(G)} > 0 = \hom(K_{d+1},k K_d)^{1/(d+1)}.
\]
Therefore neither $G = K_{d,d}$ nor $G = K_{d+1}$ maximize $\hom(G, k K_d)^{1/v(G)}$ over all $d$-regular graphs $G$.\footnote{If we wish the target graph $H$ to be connected, we can slightly modify the construction by connecting the disjoint copies by paths of length two, say.} However, we do not know which $G$ maximizes $\hom(G,k K_d)^{1/v(G)}$. For $d=3$, Csikv\'ari \cite{Csi-p} found a counterexample using a similar construction. 

In general, we do not know which graphs $G$ (other than $K_{d+1}$ and $K_{d,d}$) can arise as maximizers for $\hom(G, H)^{1/v(G)}$ in Problem~\ref{prb:hom-max}. See the end of Section~\ref{sec:hom} for some open questions and conjectures.

\section{Occupancy fraction} \label{sec:occup}

The original proof of Theorem~\ref{thm:kahn} used the entropy method~\cite{Kahn01}. The proof in Section~\ref{sec:proj}, following~\cite{LZ15}, used a variant of the H\"older's inequality, and is related to the original entropy method proof.
Recently, an elegant new proof of the result was found \cite{DJPR1} using a novel method, unrelated to previous proofs. We discuss this new technique in this section. It will be necessary to introduce weighted versions of the problems.

The \emph{independence polynomial} of a graph $G$ is defined by
\[
P_G(\lambda) := \sum_{I \in I(G)} \lambda^{|I|}.
\]
Recall that $I(G)$ is the set of independent sets of $G$. In particular, $P_G(1) = i(G)$. Theorem~\ref{thm:kahn}, which says that $i(G)^{1/v(G)} \le i(K_{d,d})^{1/(2d)}$ for $d$-regular bipartite $G$, extends to this weighted version of the number of independent sets \cite{GT04}\footnote{The case $\lambda \ge 1$ had been established earlier by Kahn~\cite{Kahn02}.}. The bipartite swapping trick in Section~\ref{sec:swap} also extends to the weighted setting~\cite{Zhao10}. 

\begin{theorem}[\cite{GT04} for bipartite $G$; \cite{Zhao10} for general $G$] \label{thm:indep-poly}
If $G$ is a $d$-regular graph and $\lambda \ge 0$, then
\[
P_G(\lambda)^{1/v(G)} \le P_{K_{d,d}}(\lambda)^{1/(2d)}.
\]
\end{theorem}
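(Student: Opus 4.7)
The plan is to follow the two-step strategy of the excerpt verbatim, keeping track of the weight $\lambda^{|I|}$ throughout. Both ingredients---the projection inequality underlying Theorem~\ref{thm:GT}, and the bipartite swapping lemma (Lemma~\ref{lem:indep-bip})---extend without essential change: the former is measure-agnostic, while the injection constructed for the latter preserves the cardinality of the independent set.

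For the bipartite case, I would mimic the proof of Theorem~\ref{thm:GT} with target $H = \tikzHind$, but equip $V(H) = \{0,1\}$ (with $0$ looped and $1$ unlooped) with the measure $w(0) = 1$, $w(1) = \lambda$. For a $d$-regular bipartite $G$ with bipartition $U \cup W$, conditioning on the assignment $\phi \colon U \to \{0,1\}$ gives
\[
P_G(\lambda) = \sum_{\phi \colon U \to \{0,1\}} \prod_{u \in U} w(\phi(u)) \prod_{w \in W} f(\phi|_{N(w)}),
\]
where $f(z_1, \ldots, z_d) = 1 + \lambda$ if all $z_i = 0$ and $f = 1$ otherwise (i.e., whether $w$ may optionally join the independent set). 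Since $G$ is $d$-regular, each $u \in U$ occurs in exactly $d$ of the neighborhoods $N(w)$; distributing $w(\phi(u))^{1/d}$ into each of those $d$ functions puts the sum in the form covered by Theorem~\ref{thm:holder-ext}. A direct calculation then gives
\[
\sum_{(z_1, \ldots, z_d) \in \{0,1\}^d} f(z_1, \ldots, z_d)^d \prod_{i=1}^d w(z_i) = (1+\lambda)^d + \bigl((1+\lambda)^d - 1\bigr) = 2(1+\lambda)^d - 1 = P_{K_{d,d}}(\lambda),
\]
the last equality because the independent sets of $K_{d,d}$ are exactly the subsets of one of its two parts (with the empty set shared). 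This delivers $P_G(\lambda) \le P_{K_{d,d}}(\lambda)^{|W|/d} = P_{K_{d,d}}(\lambda)^{v(G)/(2d)}$.

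For the general case, I would revisit the swapping injection $\psi \colon I(2G) \to I(G \x K_2)$ constructed in the proof of Lemma~\ref{lem:indep-bip}. Because it only permutes membership within pairs $\{v_0, v_1\}$ for $v \in A$, it is size-preserving, so grouping independent sets by cardinality gives the weighted form
\[
P_G(\lambda)^2 = P_{2G}(\lambda) = \sum_{S \in I(2G)} \lambda^{|S|} \le \sum_{T \in I(G \x K_2)} \lambda^{|T|} = P_{G \x K_2}(\lambda).
\]
Since $G \x K_2$ is $d$-regular, bipartite, and has $2v(G)$ vertices, applying the bipartite case yields $P_{G \x K_2}(\lambda) \le P_{K_{d,d}}(\lambda)^{v(G)/d}$, and the theorem follows after taking $(2v(G))$-th roots.

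The only nontrivial bit of arithmetic is identifying the $\ell^d$-norm $2(1+\lambda)^d - 1$ with $P_{K_{d,d}}(\lambda)$, which is immediate. I do not anticipate any deeper obstacle; the whole argument amounts to noting that the two prior tools were already phrased at a level of generality---arbitrary measures on the target in Theorem~\ref{thm:holder-ext}, and a bijective swap within vertex pairs in Lemma~\ref{lem:indep-bip}---that absorbs the vertex weighting essentially for free.
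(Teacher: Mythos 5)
Your argument is correct and is essentially the proof the paper points to for this theorem: the weighted H\"older/projection argument of Section~\ref{sec:proj} with the measure $w(0)=1$, $w(1)=\lambda$ on the target (the weighted form of Theorem~\ref{thm:GT}), combined with the observation that the injection of Lemma~\ref{lem:indep-bip} preserves $|S|$ and hence the weight $\lambda^{|S|}$, which is exactly the route of \cite{GT04} and \cite{Zhao10} cited in the statement. (The survey also sketches an alternative derivation by integrating the occupancy-fraction bound of Theorem~\ref{thm:occupancy}, but your write-up matches the approach the paper attributes the theorem to.)
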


The \emph{hard-core model} with \emph{fugacity} $\lambda$ on $G$ is defined as the probability distribution on independent sets of $G$ where an independent set $I$ is chosen with probability proportional to $\lambda^{|I|}$, i.e., with probability
\[
\Pr_\lambda[I] = \frac{\lambda^{|I|}}{P_G(\lambda)}.
\]
The \emph{occupancy fraction} of $I$ is the fraction of vertices of $G$ occupied by $I$. The expected occupancy fraction of a random independent set from the hard-core model is
\[
\alpha_G(\lambda) := \frac{1}{v(G)} \sum_{I \in I(G)} |I| \cdot \Pr_\lambda[I]
= \frac{\sum_{I \in I(G)} |I| \lambda^{|I|}}{v(G) P_G(\lambda)}
= \frac{\lambda P'_G(\lambda)}{v(G) P_G(\lambda)}.
\]
The occupancy fraction is an ``observable''---a quantity associated with each instance produced by the model.

It turns out that $K_{d,d}$  maximizes the occupancy fraction among all $d$-regular graphs.

\begin{theorem}[Davies, Jenssen, Perkins, and Roberts~\cite{DJPR1}] \label{thm:occupancy}  
  For all $d$-regular graphs $G$ and all $\lambda\ge 0$, we have
  \begin{equation}\label{eq:thm-occupancy}
  \alpha_G(\lambda) \le \alpha_{K_{d,d}}(\lambda) = \frac{\lambda(1+\lambda)^{d-1}}{2(1+\lambda)^d - 1}.
\end{equation}
\end{theorem}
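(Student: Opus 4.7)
The plan is to carry out the Davies--Jenssen--Perkins--Roberts occupancy method. Writing
\[
\alpha_G(\lambda) = \frac{1}{v(G)}\sum_{v \in V(G)} \Pr_\lambda[v \in I],
\]
the task reduces to bounding the typical vertex probability. For each $v$, I will condition on the external configuration $J := I \cap (V(G)\setminus N[v])$. The spatial Markov property of the hard-core model then implies that, given $J$, the restriction $I\cap N[v]$ is distributed as the hard-core measure at fugacity $\lambda$ on the induced subgraph $G[\{v\} \cup F_v(J)]$, where
\[
F_v(J) := \{u \in N(v) : N(u) \cap J = \emptyset\}
\]
is the set of neighbors of $v$ left unblocked by $J$. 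Since $v$ is adjacent to every vertex of $F_v(J)$, the conditional partition function equals $\lambda + P_{G[F_v(J)]}(\lambda)$, and so
\[
\Pr_\lambda[v \in I \mid J] = \frac{\lambda}{\lambda + P_{G[F_v(J)]}(\lambda)}.
\]

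Next, I will set up a linear program over the distribution of local snapshots. A \emph{profile} $\tau$ records the isomorphism type of $G[\{v\}\cup F_v(J)]$, and $p_\tau$ denotes the probability that a uniformly random vertex $v$ has profile $\tau$ under the hard-core model on $G$. The objective
\[
\alpha_G(\lambda) = \sum_\tau p_\tau \cdot \frac{\lambda}{\lambda + P_\tau(\lambda)}
\]
is linear in $\{p_\tau\}$, and these probabilities satisfy several linear constraints: normalization $\sum_\tau p_\tau = 1$; the Markov ratio identity $\Pr_\lambda[v\in I,\, N(v)\cap I = \emptyset] = \lambda\cdot\Pr_\lambda[v\notin I,\, N(v)\cap I = \emptyset]$; marginal consistency (the probability that a vertex is in $I$ computed from its own profile must agree with the corresponding value computed from any of its neighbors' profiles); and $d$-regularity (the double-count identity $\sum_v \Pr_\lambda[v\in I] = d^{-1}\sum_v \sum_{u \in N(v)} \Pr_\lambda[u\in I]$). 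Maximizing the linear objective over the polytope cut out by these constraints gives a universal upper bound on $\alpha_G(\lambda)$. A direct calculation in $K_{d,d}$ shows that only two profiles occur there: $F_v(J) = N(v)$ (edgeless of size $d$) with probability $((1+\lambda)^d + \lambda)/(2(1+\lambda)^d - 1)$ and $F_v(J) = \emptyset$ with the complementary probability, yielding the value $\lambda(1+\lambda)^{d-1}/(2(1+\lambda)^d - 1)$ as claimed.

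The main obstacle will be showing that the LP relaxation is tight, so that $K_{d,d}$ attains the supremum. I expect to first exploit the strict convexity of $x \mapsto \lambda/(\lambda + x)$ on $x \ge 0$, combined with the pointwise bound $P_{G[F]}(\lambda) \le (1+\lambda)^{|F|}$ (with equality iff $F$ is independent in $G$), to reduce to the case of edgeless profiles parameterized by $k := |F_v(J)| \in \{0,1,\dots,d\}$. This leaves the finite-dimensional problem of maximizing $\sum_k p_k \cdot \lambda/(\lambda + (1+\lambda)^k)$ subject to $\sum_k p_k = 1$ and a single moment constraint coming from $d$-regularity (obtained by double-counting occupied edges incident to $v$), which can be tackled explicitly: the optimum should be the two-point distribution on $\{0, d\}$ matching $K_{d,d}$, verified by exhibiting an explicit nonnegative dual certificate.
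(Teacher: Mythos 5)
Your setup is the genuine Davies--Jenssen--Perkins--Roberts route, and the pieces you state explicitly are correct: the spatial Markov property gives $\Pr_\lambda[v\in I\mid J]=\lambda/\bigl(\lambda+P_{G[F_v(J)]}(\lambda)\bigr)$, $d$-regularity gives the double-counting identity $\alpha_G(\lambda)=\frac1d\,\EE[\,|I\cap N(v)|\,]$, and your two-profile computation for $K_{d,d}$ (edgeless profile of size $d$ with probability $((1+\lambda)^d+\lambda)/(2(1+\lambda)^d-1)$, empty profile otherwise) is right. Note that the paper itself only reproduces this local-view argument in the triangle-free case, plus a different proof of Perkins for general $G$.

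The genuine gap is the step you rely on to "reduce to edgeless profiles." Since $x\mapsto\lambda/(\lambda+x)$ is \emph{decreasing}, the pointwise bound $P_{G[F]}(\lambda)\le(1+\lambda)^{|F|}$ gives $\frac{\lambda}{\lambda+P_{G[F]}(\lambda)}\ge\frac{\lambda}{\lambda+(1+\lambda)^{|F|}}$: a profile containing edges has a \emph{larger} objective coefficient than the edgeless profile on the same number of vertices, so replacing it by the edgeless one lowers the LP value and does not yield an upper bound on $\alpha_G(\lambda)$. Convexity of $\lambda/(\lambda+x)$ does not repair this on its own, because any such replacement must be carried out jointly with the regularity constraint, whose coefficient $\lambda P'_\tau(\lambda)/(\lambda+P_\tau(\lambda))$ (the conditional expected number of occupied neighbors) also changes when edges are added, and not in a way controlled by $|F_v(J)|$ alone. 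This is exactly where the general (triangle-containing) case becomes substantially harder: in \cite{DJPR1} it is handled by running the linear program over \emph{all} graphs on at most $d$ vertices that can arise as the induced graph on unblocked neighbors and verifying a dual certificate for every such graph, not by pointwise domination. As written, your plan proves the theorem only for triangle-free $G$, where $F_v(J)$ is automatically independent (this is the paper's first proof). If you want the full statement without that machinery, follow the paper's second proof: record only $Y=|I\cap N(v)|$, with $p_k=\PP(Y=k)$, use $\alpha_G(\lambda)=\frac{\lambda}{1+\lambda}p_0=\frac1d\sum_k k\,p_k$, and add the exchange inequality $(d-k+1)\lambda p_{k-1}\ge k\,p_k$ for $2\le k\le d$; this small LP already forces $p_0\le(1+\lambda)^d/\bigl(2(1+\lambda)^d-1\bigr)$, which is the claimed bound. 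The final dual-certificate step for your $k$-parameterized LP, which you defer, is the comparatively easy part.
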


Since the expected occupancy fraction is proportional to the logarithmic derivative of $P_G(\lambda)^{1/v(G)}$, the inequality for the expected occupancy fraction implies the corresponding inequality for the independence polynomial. Indeed, Theorem~\ref{thm:occupancy} implies Theorem~\ref{thm:indep-poly} (and hence Theorems~\ref{thm:kahn} and \ref{thm:zhao}) since 
\[
\frac{1}{v(G)}\int_0^\lambda \frac{\ol\alpha_G(t)}{t} \, dt
= \frac{1}{v(G)}\int_0^\lambda \frac{P'_G(t)}{P_G(t)} \, dt
= \frac{\log P_G(\lambda)}{v(G)}.
\]

We reproduce here two proofs of Theorem~\ref{thm:occupancy}. They are both based on the following idea, introduced in \cite{DJPR1} for this problem. We draw a random independent set $I$ from the hard-core model and look at the neighborhood of a uniform random vertex $v \in V(G)$. The expected occupancy fraction is then the probability that $v \in I$. (It is helpful here that the occupancy fraction is an observable quantity.) We then analyze how the neighborhood of $v$ should look in relation to $I$. Since the graph is regular, a uniform random neighbor of $v$ is uniformly distributed in $V(G)$. By finding an appropriate set of constraints on the probabilities of seeing various neighborhood configurations of $v$, we can bound the probability that $v \in I$.

The first proof is given below under the additional simplifying assumption that $G$ is triangle-free (which includes all bipartite graphs and much more). See~\cite{DJPR1} for how to extend this proof to all regular graphs.

\begin{proof}[Proof of Theorem~\ref{thm:occupancy} for triangle-free $G$]
  Let $I$ be an independent set of $G$ drawn according to the hard-core model with fugacity $\lambda$. For each $v \in V(G)$, let $p_v$ denote the probability that $v \in I$. We say $v \in V(G)$ is \emph{uncovered} if none of the neighbors of $v$ are in $I$, i.e., $N(v) \cap I = \emptyset$. If $v \in I$ then $v$ is necessarily uncovered. Conversely, conditioned on $v$ being uncovered, one has $v \in I$ with probability $\lambda/(1+\lambda)$. So the probability that $v$ is uncovered is $p_v(1+\lambda) / \lambda$.

Let $U_v$ denote the set of uncovered neighbors of $v$. Since $G$ is triangle-free, $U_v$ is an independent set. Conditioned on $U_v$ being the uncovered neighbors of $v$, the probability that $v$ is uncovered, which is equivalent to $U_v \cap I = \emptyset$, is exactly $(1+\lambda)^{-|U_v|}$. Hence
\begin{equation} \label{eq:p_v-ineq}
\frac{1+\lambda}{\lambda} p_v = \EE[(1 + \lambda)^{-|U_v|}]
\le 1 - \frac{\EE[|U_v|]}{d} \left( 1 - (1+\lambda)^{-d}\right),
\end{equation}
where the inequality follows from $0 \le |U_v| \le d$ and the convexity of the function $x \mapsto (1+\lambda)^{-x}$, so that $(1+\lambda)^{-x} \le 1 - \frac{x}{d}(1-(1+\lambda)^{-d})$ for all $0 \le x \le d$ by linear interpolation.

If $v$ is chosen from $V(G)$ uniformly at random, then $\EE[p_v] = \alpha_G(\lambda)$ is the expected occupancy fraction. Similarly, $\EE[|U_v|]/d$ is the probability that a random vertex is uncovered (here we use again that $G$ is $d$-regular), which equals $\EE[p_v] \frac{1+ \lambda}{\lambda} = \alpha_G(\lambda) \frac{1+\lambda}{\lambda}$. Setting into \eqref{eq:p_v-ineq}, we obtain
\[
\frac{1+\lambda}{\lambda} \alpha_G(\lambda) \le 1 - \alpha_G(\lambda) \frac{1+\lambda}{\lambda} \left( 1 - (1+\lambda)^{-d}\right).
\]
Rearranging gives us \eqref{eq:thm-occupancy}.
\end{proof}

In \cite{DJPR1}, Theorem~\ref{thm:occupancy} was proved for all $d$-regular graphs $G$ by considering all graphs on $d$ vertices that could be induced by the neighborhood of a vertex in $G$ and using a linear program to constrain the probability distribution of the neighborhood profile of a random vertex. When $G$ is triangle-free, the neighborhood of a vertex is always an independent set, which significantly simplifies the situation. The following conjecture extends Theorem~\ref{thm:GT} to triangle-free graphs.

\begin{conjecture}[\cite{CCPT}]
  Let $G$ be a triangle-free $d$-regular graph and $H$ a loop-graph. Then
  \[
  \hom(G,H)^{1/v(G)} \le \hom(K_{d,d},H)^{1/(2d)}.
  \]
\end{conjecture}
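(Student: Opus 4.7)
The most promising approach is to adapt the occupancy fraction method of Davies, Jenssen, Perkins, and Roberts (Section~\ref{sec:occup}) from the independence polynomial to arbitrary loop-graphs $H$. The triangle-free hypothesis on $G$ enters in precisely the way it does for the hard-core model: the neighborhood $N(v)$ of every vertex $v$ is an independent set of size $d$ in $G$, mirroring the structure of one side of $K_{d,d}$.

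First I would reduce to a weighted statement. Introduce weights $\lambda = (\lambda_z)_{z \in V(H)}$ and define the multivariate partition function
\[
Z_G(H,\lambda) := \sum_{\phi \in \Hom(G,H)} \prod_{v \in V(G)} \lambda_{\phi(v)},
\]
so that $\hom(G,H) = Z_G(H, \mathbf{1})$. The conclusion would follow (by setting $\lambda \equiv 1$) from the stronger bound $Z_G(H,\lambda)^{1/v(G)} \le Z_{K_{d,d}}(H,\lambda)^{1/(2d)}$. Integrating logarithmic derivatives in $\lambda$ as in Section~\ref{sec:occup} reduces this to proving, for every $\lambda$, an inequality on the vector of occupancy fractions
\[
\alpha_z(G,\lambda) := \frac{1}{v(G)}\, \EE_{\phi \sim \mathrm{Gibbs}(\lambda)}|\phi^{-1}(z)|, \qquad z \in V(H),
\]
asserting that $(\alpha_z(G,\lambda))_z$ lies in a convex region controlled by the $K_{d,d}$ values.

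Next I would run the DJPR-style neighborhood analysis. Sample $\phi$ from the Gibbs distribution and a uniform vertex $v$. The conditional distribution of $\phi(v)$ given $\psi = \phi|_{N(v)}$ is
\[
\Pr[\phi(v) = z \mid \psi] = \frac{\lambda_z \cdot \one\{z\psi(u) \in E(H) \text{ for all } u \in N(v)\}}{\sum_{z'} \lambda_{z'} \cdot \one\{z'\psi(u) \in E(H) \text{ for all } u \in N(v)\}}.
\]
Because $N(v)$ carries no internal edges in $G$ (this is where triangle-freeness is essential), the averaged distribution of $\psi$ is some probability measure $\mu$ on $V(H)^d$ whose only hard linear constraint is that each of its one-coordinate marginals coincides, by $d$-regularity, with the overall marginal distribution $(\alpha_z)_z$ of $\phi$ on a uniform vertex. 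Combining this marginal consistency with the conditional-probability formulas yields a linear program whose feasible region encodes all attainable $(\alpha_z)_z$ for triangle-free $d$-regular $G$; one then wants to show the extremal $\mu$ is the product measure realized by $G = K_{d,d}$.

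\textbf{Main obstacle.} For $H = \tikzHind$, DJPR collapse this LP using a single scalar observable (the event that $v$ is uncovered) together with convexity of $x \mapsto (1+\lambda)^{-x}$ on $[0,d]$. For general $H$ the LP lives in a space of dimension $|V(H)|^d$ and the correct extremizer is no longer transparent: one must show that among all distributions $\mu$ on $V(H)^d$ with prescribed one-coordinate marginals, the product measure maximizes the quantity bounding the $\alpha_z$. One route is an exchange argument reducing $\mu$ to a product measure one coordinate at a time; another is to couple the LP with the Hölder-type inequality of Theorem~\ref{thm:holder-ext}, effectively embedding the $K_{d,d}$ profile inside the neighborhood of $v$. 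A secondary subtlety is that triangle-free is strictly weaker than bipartite, so the argument must not tacitly invoke a global bipartition, and any use of $d$-regularity must be compatible with the presence of odd cycles of length $\ge 5$. Success even for $H = K_q$ would already recover the triangle-free case of Conjecture~\ref{conj:coloring}, so any partial progress on the LP is likely to be of independent interest.
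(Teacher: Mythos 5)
You have not given a proof here, and indeed the paper does not contain one either: this statement is stated as a \emph{conjecture} (attributed to \cite{CCPT}) and is open, so the honest assessment is that your proposal is a research plan whose decisive step is exactly the open problem. The framework you set up (Gibbs measure with vertex weights $\lambda_z$, a uniform random vertex $v$, conditioning on $\phi|_{N(v)}$, using triangle-freeness so that $N(v)$ spans no edges, and the marginal-consistency constraint coming from $d$-regularity) is the correct and known adaptation of the Davies--Jenssen--Perkins--Roberts method, but everything substantive is deferred to the step you yourself flag as the ``main obstacle'': showing that the resulting optimization over distributions $\mu$ on $V(H)^d$ is extremized by the $K_{d,d}$ profile. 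That is not a technical loose end; it is the content of the conjecture. Worse, as stated your relaxation may simply be too weak: if the only constraint retained is that the one-coordinate marginals of $\mu$ agree with the occupancy vector $(\alpha_z)_z$, the optimum of that LP can strictly exceed the $K_{d,d}$ value, and even in the hard-core case DJPR do not optimize over such a bare feasible region --- they exploit an additional identity tying the probability that $v$ is uncovered to the expected number of uncovered neighbours, plus a convexity bound, i.e.\ extra valid constraints whose analogues for general $H$ are precisely what is missing.

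Two further points in the reduction are glossed over. First, the passage from the weighted inequality to ``an inequality on the vector of occupancy fractions'' is not canonical for multi-state $H$: for the hard-core model one integrates the scalar identity $\lambda P_G'(\lambda)/(v(G)P_G(\lambda))$ along the single parameter $\lambda$ from a trivial endpoint, whereas for general $H$ you must choose a path in the weight space $(\lambda_z)_{z\in V(H)}$ starting at a point where the inequality is trivial and prove pointwise domination of a specific directional derivative along it; ``$(\alpha_z)_z$ lies in a convex region controlled by the $K_{d,d}$ values'' does not integrate to the partition-function inequality without such a choice. Second, the paper's own discussion calibrates how hard this is: for $H=K_q$ the analogous derivative statement is Conjecture~\ref{conj:potts}, proved only for $d=3$ and only with a substantially more involved analysis of two-step neighbourhoods (Section~\ref{sec:occup}), and Conjecture~\ref{conj:coloring} remains open for all $d\ge 4$. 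So your proposal correctly identifies the natural attack and where it breaks, but it establishes nothing beyond the known framework; the statement remains unproved.
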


Next we give an alternative proof of Theorem~\ref{thm:occupancy} due to Perkins~\cite{Perkins-pc}, based on a similar idea. In the following proof, we do not need to assume that $G$ is triangle-free. In the proof, we introduce an additional constraint, which allows us to obtain the result more quickly. This simplification seems to be somewhat specific to independent sets.

\begin{proof}[Second proof of Theorem~\ref{thm:occupancy}]
Let $I$ be an independent set of $G$ drawn according to the hard-core model with fugacity $\lambda$, and let $v$ be a uniform random vertex in $G$. Let $Y = \abs{I \cap N(v)}$ denote the number of neighbors of $v$ in $I$ (not including $v$ itself). Let $p_k = \PP(Y = k)$. Since $Y \in \{0, 1, \dots, d\}$,
\begin{equation}\label{eq:occ-prob-sum}
p_0 + p_1 + \cdots + p_d = 1.
\end{equation}
However, not all vectors of probabilities $(p_0, \dots, p_d)$ are feasible. The art of the method is in finding additional constraints on the probability distribution.

As in the previous proof, since $v$ is uncovered if and only if $Y = 0$, we have
\[
\alpha_G(\lambda) = \PP(v \in I) = \frac{\lambda}{1+\lambda} \PP(Y = 0) = \frac{\lambda}{1+\lambda} p_0.
\]
On the other hand, since $G$ is $d$-regular, a uniform random neighbor of $v$ is also uniformly distributed in $V(G)$, so we have
\[
\alpha_G(\lambda) = \frac{1}{d} \EE[Y] = \frac{1}{d}(p_1 + 2p_2 + \cdots + dp_d).
\]
Comparing the previous two relations, we obtain
\begin{equation}
	\label{eq:occ-neighbor-relation}
	\frac{\lambda}{1+\lambda} p_0 = \frac{1}{d}(p_1 + 2p_2 + \cdots + dp_d).
\end{equation}
Now, let us compare the probability that $v$ has $k$ versus $k-1$ neighbors. In an event where exactly $k$ neighbors of $v$ are occupied, we can remove any of the occupied neighbors from $I$, and obtain another independent set where $v$ has exactly $k-1$ neighbors. There are $k$ ways to remove an element, but we over-count by a factor of at most $d-k+1$. Also factoring in the weight multiplier, we obtain the inequality
\begin{equation} \label{eq:occ-descend}
(d-k+1) \lambda p_{k-1} \ge k p_k, \qquad \text{for } 2 \le k \le d.
\end{equation}
The constraints \eqref{eq:occ-prob-sum}, \eqref{eq:occ-neighbor-relation}, and \eqref{eq:occ-descend} together form a linear program with variables $p_0, \dots, p_d$. 
Next we show that these linear constraints together imply $p_0 \le \frac{(1+\lambda)^d}{2(1+\lambda)^d - 1}$, which gives the desired bound on $\alpha_G(\lambda) = \frac{\lambda}{1+\lambda}p_0$. Equality is attained for the probability distribution $(p_0, \dots, p_d)$ arising from $G = K_{d,d}$.

To prove this claim, first we show that if $(p_0, \dots, p_d)$ achieves the maximum of value of $p_0$ while satisfying the constraints \eqref{eq:occ-prob-sum}, \eqref{eq:occ-neighbor-relation}, and \eqref{eq:occ-descend}, then every inequality in \eqref{eq:occ-descend} must be an equality. Indeed, if we have $(d-k+1) \lambda p_{k-1} > k p_k$ for some $k$, then by increasing $p_0$ by $\epsilon$, decreasing $p_{k-1}$ by $(\frac{d\lambda}{1+\lambda} + k)\epsilon$, increasing $p_k$ by $(\frac{d\lambda}{1+\lambda} + k-1)\epsilon$, and leaving all other $p_i$'s fixed, we can maintain all constraints and increase $p_0$, provided $\epsilon > 0$ is sufficiently small. Thus, in the maximizing solution, equality occurs in \eqref{eq:occ-descend} for all $2 \le k \le d$. It can be checked that the vector $(p_0, \dots, p_d)$ arising from $G = K_{d,d}$ satisfies all the equality constraints, and it is the unique solution since we have a linear system of equations with full rank.
\end{proof}

Conjecture~\ref{conj:coloring} about the number of colorings was recently proved~\cite{DJPR3} for 3-regular graphs using an extension of the above method. Instead of the independence polynomial and the hard-core model, one considers a continuous relaxation of proper colorings by using the Potts model. We sample a $q$-coloring of $G$, not necessarily proper, so that the coloring $\sigma$ is chosen with probability proportional to $e^{-\beta m(\sigma)}$, where $m(\sigma)$ is the number of monochromatic edges, and $\beta \in \RR$ is called the \emph{inverse temperature}. A proper coloring corresponds to $\beta \to +\infty$. The \emph{partition function} (analogous to the independence polynomial) for this Potts model is
\[
Z_G^q (\beta) = \sum_{\sigma \colon V(G) \to [q]} e^{-\beta m(\sigma)}
\]
where $\sigma$ ranges over all $q$-colorings of $G$ (not necessarily proper). In the Potts model, the coloring $\sigma$ appears with probability $e^{-\beta m(\sigma)}/Z_G^q(\beta)$. The expected number of monochromatic edges of $\sigma$ is
\begin{align*}
U_G^q(\beta) 
:= \frac{1}{v(G)} \EE_\sigma[m(\sigma)] 
&= \frac{1}{v(G) Z_G^q(\beta)} \sum_{\sigma\colon V(G) \to [q]} m(\sigma) e^{-\beta m(\sigma)}
\\
&= \frac{-1}{v(G)} \frac{d}{d\beta} (\log Z_G^q(\beta)).
\end{align*}
The above quantity is analogous to the occupancy fraction for the hard-core model (think $\lambda = e^{-\beta}$). Conjecture~\eqref{conj:coloring} would follow from the next conjecture. (Note that the inequality sign is reversed since $U_G^q(\beta)$ is proportional to the negative logarithmic derivative of $Z_G(\beta)^{1/v(G)}$)

\begin{conjecture} \label{conj:potts}
	For every $d$-regular graph $G$ and integer $q \ge 3$, and any $\beta > 0$,
	\[
	U_G^q(\beta) \ge U_{K_{d,d}}^q(\beta).
	\]
	Consequently (by integrating $\beta$ and noting $Z_G^q(0)^{1/v(G)} = q$),
	\[
	Z_G^q(\beta)^{1/v(G)} \le Z_{K_{d,d}}^q(\beta)^{1/(2d)}.
	\]
\end{conjecture}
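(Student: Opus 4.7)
The plan is to adapt the occupancy-fraction method of Davies, Jenssen, Perkins, and Roberts, used in the second proof of Theorem~\ref{thm:occupancy}, to the Potts model. Sample a uniformly random vertex $v \in V(G)$ and a coloring $\sigma$ from the Potts distribution on $G$ at inverse temperature $\beta$, and set $Y_c = |\{u \in N(v) : \sigma(u) = c\}|$ for each color $c \in [q]$, so that $Y = (Y_1, \dots, Y_q)$ lies in the simplex $\Delta_d^q := \{k \in \ZZ_{\ge 0}^q : \sum_c k_c = d\}$. The Gibbs property of the Potts measure at $v$ gives $\Pr[\sigma(v) = c \mid Y] = e^{-\beta Y_c}/\sum_{c'} e^{-\beta Y_{c'}}$, which leads (by the tower property and $m(\sigma) = \frac12\sum_v Y_{\sigma(v)}$) to the key identity
\[
2\, U_G^q(\beta) = \EE[Y_{\sigma(v)}] = \sum_{k \in \Delta_d^q} p(k)\, \phi(k), \qquad \phi(k) := \frac{\sum_c k_c e^{-\beta k_c}}{\sum_{c'} e^{-\beta k_{c'}}},
\]
where $p(k) := \Pr[Y = k]$.

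Next I would extract linear constraints on $p(\cdot)$: the normalization $\sum_k p(k) = 1$, and, by $d$-regularity combined with color symmetry of the Potts measure, the marginal equalities $\sum_k k_c\, p(k) = d/q$ for each $c \in [q]$ (a uniformly random neighbor of $v$ is a uniformly random vertex, and its color marginal is $1/q$ by $S_q$-symmetry). These alone do not suffice to force $K_{d,d}$ as the minimizer: the distribution placing mass $1/q$ on each of the $q$ ``monochromatic'' profiles $d \cdot e_c$ is feasible and yields $\sum_k p(k)\phi(k) = d\, e^{-\beta d}/(e^{-\beta d} + q - 1)$, which one can check is strictly smaller than $2\, U_{K_{d,d}}^q(\beta)$ already at $d = 2$, $q = 3$, $\beta = \log 2$. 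Following the strategy of \cite{DJPR1, DJPR3}, I would therefore refine the state space to additionally record the isomorphism type of the induced subgraph $G[N(v)]$ and the restriction of $\sigma$ to $N(v)$, and impose a Gibbs-type equality at each neighbor $w \in N(v)$, expressing the conditional probability of $\sigma(w) = c$ in terms of the local coloring around $w$. This produces a finite linear program whose variables are indexed by (neighborhood-type, coloring) pairs.

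The final step is to solve this LP and match its optimum with the Potts marginals of $K_{d,d}$, in which $N(v)$ is an independent set of size $d$ whose joint coloring, conditional on the coloring $\sigma_L$ of the opposite bipartition class, is a product measure with single-site weights proportional to $e^{-\beta n_c^L}$, where $n_c^L = |\sigma_L^{-1}(c)|$. I would symmetrize $p(\cdot)$ under $S_q$ and then argue, in the spirit of Perkins's proof of Theorem~\ref{thm:occupancy}, that the combined system of Gibbs-type equalities and marginal constraints pins down the minimizer uniquely, which can then be verified to agree with the $K_{d,d}$ marginal. Once $U_G^q(\beta) \ge U_{K_{d,d}}^q(\beta)$ is established pointwise in $\beta > 0$, the partition-function inequality follows by integrating from $0$ to $\beta$ and using $Z_G^q(0)^{1/v(G)} = q = Z_{K_{d,d}}^q(0)^{1/(2d)}$, as noted in the statement.

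The main obstacle will be precisely this LP-closure step, and it is the reason the unweighted Conjecture~\ref{conj:coloring} remains open for $d \ge 4$. For $d = 3$ the LP of \cite{DJPR3} is tractable by case analysis over the handful of possible induced neighborhoods in a cubic graph, combined with a convex-combination argument in the dual; I expect that analysis to extend to $\beta > 0$ since the hard constraints of proper coloring relax continuously into Boltzmann weights that remain linear in $p(\cdot)$ after the Gibbs substitution. For $d \ge 4$, however, the number of neighborhood types grows rapidly and the LP optimum may no longer lie at $K_{d,d}$ without additional non-local constraints---a weighted analogue of the bipartite-swapping inequality of Section~\ref{sec:swap} would be a natural target, though its naive form is known to fail for $H = K_q$---so a genuinely new ingredient, such as a cluster-expansion input or a second-neighborhood convexity estimate, is likely required.
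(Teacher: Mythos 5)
There is no complete argument here to check: the statement you are addressing is stated in the paper as an open conjecture (Conjecture~\ref{conj:potts}), known only for $d=3$ via the Potts-model extension of the occupancy method in \cite{DJPR3}, and your proposal does not close that gap. What you do carry out is sound but is only the easy part of the known strategy: the identity $2\,U_G^q(\beta)=\EE[Y_{\sigma(v)}]=\sum_k p(k)\phi(k)$ follows correctly from the Gibbs property at $v$, the constraint $\sum_k k_c\,p(k)=d/q$ follows correctly from $d$-regularity plus color symmetry, and your observation that these constraints alone admit feasible points beating the $K_{d,d}$ value is the right reason to enlarge the local information. But the decisive step --- formulating the refined linear program over local configurations and proving that its optimum is attained by the $K_{d,d}$ marginals for all $d$ and all $\beta>0$ --- is precisely what you defer (``solve this LP and match its optimum''), and you yourself concede it is the open obstacle for $d\ge 4$. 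A plan whose crucial step is acknowledged to be unresolved is not a proof, and indeed cannot be one at present, since the conjecture (which implies Conjecture~\ref{conj:coloring}) is open for all $d\ge 4$.

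Two smaller points. First, your description of the $d=3$ case slightly understates what \cite{DJPR3} actually needs: as the paper notes, their linear program is built from all configurations of the \emph{2-step} neighborhood of a random vertex, not merely the induced coloring of $N(v)$ together with single-site Gibbs equalities at the neighbors, and the resulting analysis is substantially more involved than either proof of Theorem~\ref{thm:occupancy}; so ``I expect that analysis to extend to $\beta>0$'' reverses the logic --- the $\beta>0$ (finite-temperature) statement \emph{is} what \cite{DJPR3} prove, and the proper-coloring case is obtained from it in the limit. Second, in your description of the $K_{d,d}$ marginal the indices are off: if $v$ lies in one part, $N(v)$ is the entire opposite part, and the conditional law of $\sigma$ on $N(v)$ is a product measure with weights $e^{-\beta n_c}$ where $n_c$ counts the color-$c$ vertices in the part containing $v$ (minus $v$ itself when conditioning appropriately), not in $N(v)$'s own part. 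Neither of these is the core problem; the core problem is that the LP-closure step is missing, which is exactly why the statement remains a conjecture.
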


Conjecture~\ref{conj:potts} was proved for 3-regular graphs in \cite{DJPR3} using a variant of the method discussed in this section, by considering all configurations of the 2-step neighborhood of a uniform random vertex. The analysis is substantially more involved than the proofs we saw for independent sets.

\section{On the minimum number of independent sets and homomorphisms} \label{sec:min}

\subsection{Independent sets}
Having explored the maximum number of independent sets in a regular graph, let us turn to the natural opposite question. Which $d$-regular graph has the minimum number of independent sets? It turns out that the answer is a disjoint union of cliques.

\begin{theorem}[Cutler and Radcliffe~\cite{CR14}] \label{thm:ind-min}
  For a $d$-regular graph $G$,
\[
i(G)^{1/v(G)} \ge i(K_{d+1})^{1/(d+1)} = (d+2)^{1/(d+1)}.
\]
\end{theorem}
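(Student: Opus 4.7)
The plan is to prove the stronger weighted statement $P_G(\lambda)^{1/v(G)} \geq P_{K_{d+1}}(\lambda)^{1/(d+1)} = (1+(d+1)\lambda)^{1/(d+1)}$ for every $\lambda \geq 0$, with Theorem~\ref{thm:ind-min} falling out at $\lambda = 1$. The tool of choice is the occupancy fraction method of Section~\ref{sec:occup}, but run in reverse so as to produce a \emph{lower} bound on $\alpha_G(\lambda)$ matching the value $\alpha_{K_{d+1}}(\lambda) = \lambda/(1+(d+1)\lambda)$.

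Concretely, I would draw $I$ from the hard-core model at fugacity $\lambda$ and let $v$ be a uniformly random vertex; set $U_v := \abs{I \cap N(v)}$ and $p_v := \Pr[v \in I]$. The same conditioning appearing in the first proof of Theorem~\ref{thm:occupancy} gives $p_v = \tfrac{\lambda}{1+\lambda}\Pr[U_v = 0]$ (the event $U_v = 0$ is exactly what was called ``$v$ is uncovered'' there). Now, instead of the convexity bound on $(1+\lambda)^{-|U_v|}$ used in Section~\ref{sec:occup}, apply the even simpler one-line Markov inequality to the non-negative integer $U_v$:
\[
\Pr[U_v = 0] \;=\; 1 - \Pr[U_v \geq 1] \;\geq\; 1 - \EE[U_v].
\]
Averaging over $v$ and using $d$-regularity (so $\EE_v \EE[U_v] = d\,\alpha_G(\lambda)$) gives
\[
\frac{1+\lambda}{\lambda}\,\alpha_G(\lambda) \;\geq\; 1 - d\,\alpha_G(\lambda),
\]
which rearranges to $\alpha_G(\lambda) \geq \lambda/(1+(d+1)\lambda)$. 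To finish, integrate $\frac{d}{d\lambda}\log P_G(\lambda) = v(G)\,\alpha_G(\lambda)/\lambda$ from $0$ to $\lambda$ (using $P_G(0) = 1$) to obtain $P_G(\lambda) \geq (1+(d+1)\lambda)^{v(G)/(d+1)}$, and set $\lambda = 1$.

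There is no serious technical obstacle; the ``hard part'' is really just recognizing that the max-direction machinery of Section~\ref{sec:occup} adapts verbatim to the min direction. The max proof hinges on a tight linear upper bound for $(1+\lambda)^{-x}$ on $[0,d]$; here the min direction hinges on the even more elementary tight linear upper bound $\mathbf{1}[U_v \geq 1] \leq U_v$. Both surrogates are sharp precisely on the extremizer's neighborhood profile---$U_v \in \{0,1\}$ for $G = K_{d+1}$, since every independent set of $K_{d+1}$ has size at most one---which is what forces the resulting constant to be the right one and makes the bound tight at $G = K_{d+1}$.
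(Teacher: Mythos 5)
Your argument is correct, but it is genuinely different from the proof in the paper. The paper follows Cutler--Radcliffe and proves the stronger fixed-size statement of Theorem~\ref{thm:ind-min-by-size} ($i_t(G) \ge i_t(aK_{d+1})$ for every $t$) by a direct counting argument: when building an independent set one vertex at a time, the $j$-th vertex excludes at most $(j-1)(d+1)$ vertices, so $G$ never has fewer choices than $aK_{d+1}$; Theorem~\ref{thm:ind-min} then follows by summing over $t$ and handling divisibility with disjoint copies. You instead run the occupancy method of Section~\ref{sec:occup} in the minimization direction, and your steps all check out: the identity $p_v = \frac{\lambda}{1+\lambda}\Pr[\,|I\cap N(v)|=0\,]$ needs no triangle-free hypothesis (you only use the conditional probability of occupation given that the neighborhood is empty, not the structure of the uncovered neighborhood), Markov's inequality gives $\Pr[\,|I\cap N(v)|=0\,]\ge 1-\EE|I\cap N(v)|$, averaging over $v$ with $d$-regularity yields $\alpha_G(\lambda)\ge \lambda/(1+(d+1)\lambda)=\alpha_{K_{d+1}}(\lambda)$, and it is tight for $K_{d+1}$ since there $|I\cap N(v)|\in\{0,1\}$; integrating the logarithmic derivative gives $P_G(\lambda)\ge (1+(d+1)\lambda)^{v(G)/(d+1)}$ and $\lambda=1$ finishes. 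The trade-off is worth noting: the paper's combinatorial route yields coefficientwise domination ($i_t(G)\ge i_t(aK_{d+1})$ for each $t$), which in particular implies your weighted inequality for all $\lambda\ge 0$, so it is formally stronger; your route is shorter once the hard-core machinery is in place, produces the weighted bound directly in one stroke, and nicely illustrates that the occupancy-fraction technique of Section~\ref{sec:occup} also gives sharp lower bounds, with the elementary surrogate $\mathbf{1}[x\ge 1]\le x$ playing the role of the convexity interpolation used in Theorem~\ref{thm:occupancy}.
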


In fact, a stronger result holds: a disjoint union of $K_{d+1}$'s minimizes the number of independent sets of every fixed size. We write $a G$ for a disjoint union $a$ copies of $G$. Let $i_t(G)$ denote the number of independent sets of $G$ of size $t$.

\begin{theorem}[\cite{CR14}] \label{thm:ind-min-by-size}
  Let $a$ and $d$ be positive integers. Let $G$ be a $d$-regular graph with $a(d+1)$ vertices. Then $i_t(G) \ge i_t(a K_{d+1})$ for every $0 \le t \le a(d+1)$.
\end{theorem}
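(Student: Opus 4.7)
The plan is to induct on $a$. The base case $a = 1$ is immediate, since $G = K_{d+1}$ is forced and both sides agree. For the inductive step with $a \ge 2$, the main tool is the double-counting identity obtained by summing over pairs $(v, I)$ where $v$ is a vertex and $I$ is a $t$-element independent set containing $v$:
\[
t \cdot i_t(G) = \sum_{v \in V(G)} i_{t-1}(G - N[v]).
\]
Each subgraph $G - N[v]$ has exactly $(a-1)(d+1)$ vertices and maximum degree at most $d$, though it need not be $d$-regular. So the problem reduces to proving an auxiliary lemma: \emph{for any graph $H$ on $b(d+1)$ vertices with $\Delta(H) \le d$, one has $i_t(H) \ge \binom{b}{t}(d+1)^t$.} Granted this, applying it with $H = G - N[v]$ (so $b = a-1$) and summing yields
\[
t \cdot i_t(G) \ge a(d+1)\binom{a-1}{t-1}(d+1)^{t-1} = t\binom{a}{t}(d+1)^t,
\]
which proves the theorem.

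The auxiliary lemma is itself proved by induction on the number of vertices. The clean case is when $H$ has a vertex $v$ with $\deg_H(v) = d$ and $N[v]$ inducing a copy of $K_{d+1}$: then each vertex in $N[v]$ has all its $d$ edges inside $N[v]$, so $N[v]$ is a connected component and $H = K_{d+1} \sqcup H'$. Consequently
\[
i_t(H) = i_t(H') + (d+1)\,i_{t-1}(H') \ge \binom{b-1}{t}(d+1)^t + (d+1)\binom{b-1}{t-1}(d+1)^{t-1} = \binom{b}{t}(d+1)^t
\]
by the inductive hypothesis and Pascal's identity. The same splitting also handles the main theorem directly whenever $G$ itself contains an isolated $K_{d+1}$.

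The main obstacle is the remaining case, where $H$ has no isolated $K_{d+1}$ component. My approach is to add edges to $H$ (respecting the constraint $\Delta(H) \le d$), each edge addition only decreasing $i_t$ for every $t$, until $H$ is maximal. In such a maximal graph, the set of vertices of degree strictly less than $d$ must induce a clique of size at most $d$, forcing a highly restricted structure. From there I would invoke a Kelmans-style compression, swapping neighborhoods between two vertices so that one neighborhood contains the other; such compressions are known to decrease the independence polynomial coefficient-wise, hence decrease every $i_t$ simultaneously. Iterating these moves should drive $H$ toward $bK_{d+1}$, at which stage an isolated $K_{d+1}$ appears and the induction on $b$ closes. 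The delicate step — and what I expect to be the hardest part — is verifying that the combined effect of edge additions and Kelmans compressions actually terminates at $bK_{d+1}$ rather than at some other fixed point of the compression operation, and that no intermediate configuration violates the degree bound needed to run the argument.
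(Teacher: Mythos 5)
Your reduction is fine as far as it goes: the double-counting identity $t\,i_t(G)=\sum_{v}i_{t-1}(G-N[v])$ is correct, the arithmetic $a\binom{a-1}{t-1}=t\binom{a}{t}$ checks out, and the ``clean case'' of your auxiliary lemma (splitting off an isolated $K_{d+1}$ and using Pascal's identity) is sound. But the entire weight of the proof rests on the auxiliary lemma for graphs with $\Delta(H)\le d$ and no isolated $K_{d+1}$, and that case is not proved. The edge-addition step is legitimate (adding edges only decreases each $i_t$, and in a maximal graph the low-degree vertices form a clique of size at most $d$), but the Kelmans-compression step fails as described. First, the direction is wrong: moves that nest one neighborhood inside another make the graph more star-like and tend to \emph{increase} the independent-set counts, not decrease them --- e.g.\ the Kelmans move taking $P_4$ to $K_{1,3}$ raises $i_3$ from $0$ to $1$ and the total count from $8$ to $9$. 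Second, such a move increases the degree of the receiving vertex and so generically violates the constraint $\Delta\le d$ that your induction needs. Third, even granting a corrected monotone operation, you give no argument that the process terminates at $bK_{d+1}$ rather than at some other fixed point; you flag this yourself, and it is exactly the missing content. Note also that, given the auxiliary lemma, your induction on $a$ and the double-counting identity are superfluous: applying the lemma directly to $H=G$ with $b=a$ already gives $i_t(G)\ge\binom{a}{t}(d+1)^t=i_t(aK_{d+1})$.

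The gap closes with a much shorter argument, which is in fact the paper's proof (and it only uses $\Delta\le d$, so it proves your auxiliary lemma verbatim): count ordered sequences of $t$ vertices forming an independent set. After $j-1$ vertices have been chosen, their closed neighborhoods cover at most $(j-1)(d+1)$ vertices, so at least $(b-j+1)(d+1)$ choices remain for the $j$-th vertex; hence the number of such sequences is at least $\prod_{j=1}^{t}(b-j+1)(d+1)=t!\binom{b}{t}(d+1)^t$, while in $bK_{d+1}$ these counts are exact. Dividing by $t!$ gives $i_t(H)\ge\binom{b}{t}(d+1)^t=i_t(bK_{d+1})$, with no induction, no edge additions, and no compression needed.
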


\begin{proof}
  Let us compare the number of sequences of $t$ vertices that form an independent set in $G$ and $a K_{d+1}$. In $a K_{d+1}$, we have $a(d+1)$ choices for the first vertex. Once the first vertex has been chosen, there are exactly $(a-1)(d+1)$ choices for the second vertex. More generally, for $1 \le j \le a$, once the first $j-1$ vertices have been chosen, there are exactly $(a+1-j)(d+1)$ choices for the $j$-th vertex.

On the other hand, in $G$, after the first $j-1$ vertices have been chosen, the union of these $j-1$ vertices along with their neighborhoods has cardinality at most $(j-1)(d+1)$, so there are at least $(a + 1 - j)(d+1)$ choices for the $j$-th vertex, at least as many compared to $a K_{d+1}$.
\end{proof}

\begin{proof}[Proof of Theorem~\ref{thm:ind-min}]
  Theorem~\ref{thm:ind-min-by-size} implies that $i(G)^{1/v(G)} \ge i(K_{d+1})^{1/(d+1)}$ whenever $v(G)$ is divisible by $d+1$. When $v(G)$ is not divisible by $d+1$, we can apply the same inequality to a disjoint union of $(d+1)$ copies of $G$ to obtain $i(G)^{1/v(G)} = i((d+1)G)^{1/((d+1)v(G))} \ge i(K_{d+1})^{1/(d+1)}$.
\end{proof}

The situation changes significantly if we require $G$ to be bipartite. In this case, the problem was solved very recently by Csikv\'ari~\cite{Csi16ar}, who showed that the infimum of $i(G)^{1/v(G)}$ over $d$-regular bipartite graphs $G$ is obtained by taking a sequence of $G$ with increasing girth, i.e., $G$ is locally tree-like. The limit of $i(G)^{1/v(G)}$ for a sequence of bipartite $d$-regular graphs $G$ of increasing girth was determined by Sly and Sun \cite{SS14} using sophisticated (rigorous) methods from statistical physics.

\subsection{Colorings}

Here is an infimum of $\hom(G,K_q)^{1/v(G)}$ over $d$-regular graphs $G$ due to Csikv\'ari~\cite{Csi-p}.

\begin{theorem} \label{thm:min-color}
For a $d$-regular graph $G$	and any $q \ge 2$, 
\[
\hom(G, K_q)^{1/v(G)} \ge \hom(K_{d+1}, K_q)^{1/(d+1)}.
\]
\end{theorem}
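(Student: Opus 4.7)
The plan is to combine the standard greedy lower bound for chromatic polynomials with an averaging argument over a uniformly random linear ordering of the vertices. First I would dispose of the trivial case $q \le d$, in which $\hom(K_{d+1},K_q) = 0$ and the inequality is vacuous; so assume $q \ge d+1$. For any linear ordering $\pi = (v_1, \ldots, v_n)$ of $V(G)$, write $d_i(\pi)$ for the back-degree of $v_i$, i.e.\ the number of neighbors of $v_i$ among $v_1, \ldots, v_{i-1}$. Building a proper $q$-coloring one vertex at a time, at step $i$ there are at least $q - d_i(\pi)$ valid choices (colors not used by any already-colored neighbor of $v_i$), so summing over choices yields the greedy lower bound
\[
\hom(G, K_q) \ge \prod_{i=1}^n \bigl(q - d_i(\pi)\bigr).
\]

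The key step is to choose $\pi$ cleverly, and I would do so probabilistically. Let $\pi$ be uniformly random among all $n!$ linear orderings of $V(G)$. For any fixed vertex $v$, the relative order of the $d+1$ vertices in $\{v\} \cup N(v)$ under $\pi$ is itself uniformly random, so the rank of $v$ within this subset is uniform on $\{1, \ldots, d+1\}$. Hence $d_v(\pi)$ is uniformly distributed on $\{0, 1, \ldots, d\}$, giving
\[
\EE_\pi \bigl[\log(q - d_v(\pi))\bigr] = \frac{1}{d+1} \sum_{j=0}^d \log(q-j) = \frac{1}{d+1} \log \hom(K_{d+1}, K_q).
\]
Summing over $v \in V(G)$ and using linearity of expectation,
\[
\EE_\pi \left[\sum_{i=1}^n \log\bigl(q - d_i(\pi)\bigr)\right] = \frac{n}{d+1} \log \hom(K_{d+1}, K_q),
\]
so some specific ordering $\pi^*$ attains at least this value. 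Plugging into the greedy bound and taking $n$-th roots yields the desired inequality $\hom(G, K_q)^{1/n} \ge \hom(K_{d+1}, K_q)^{1/(d+1)}$.

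I do not foresee a significant obstacle in executing this plan: everything reduces to the clean observation that, although the joint distribution of back-degrees $(d_1(\pi), \ldots, d_n(\pi))$ is complicated and correlated, each individual marginal is uniform on $\{0, 1, \ldots, d\}$---precisely the multiset of back-degrees realized by $K_{d+1}$ under any ordering. This transparently identifies disjoint unions of $K_{d+1}$ as the extremal minimizers, and mirrors in spirit the sequential argument of Cutler and Radcliffe used for Theorem~\ref{thm:ind-min-by-size}, except that for colorings a deterministic ordering is replaced by a random one.
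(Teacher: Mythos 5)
Your proposal is correct and is essentially identical to the paper's proof (due to Csikv\'ari): both use the greedy coloring bound over a uniformly random vertex ordering, observe that each vertex's back-degree is uniform on $\{0,1,\dots,d\}$ because the relative order of $\{v\}\cup N(v)$ is uniform, and conclude by linearity of expectation (the paper compares expectations directly, while you additionally fix an ordering attaining the mean, which is an immaterial difference). No gaps.
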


\begin{proof}
Assume $q \ge d+1$, since otherwise the right-hand side is zero. Let $\sigma$ be a random permutation of $V(G)$. For each $u \in V(G)$, let $d_u^\sigma$ denote the number of neighbors of $u$ that appears before $u$ in the permutation $\sigma$.  By coloring the vertices in the order of $\sigma$, there are at least $q - d_u^\sigma$ choices for the color of vertex $u$, so
\[
\hom(G, K_q) \ge \prod_{u \in V(G)} (q - d_u^\sigma).
\]
Taking the logarithm of both sides, we find that
\begin{equation} \label{eq:color-sigma-ineq}
\frac{1}{v(G)} \log \hom(G, K_q) \ge \frac{1}{v(G)} \sum_{u \in V(G)} \log(q - d_u^\sigma).
\end{equation}
For each $u \in V(G)$, the random variable $d_u^\sigma$ is uniformly distributed among $\{0, 1, \dots, d\}$ since the ordering of $u \cup N(u)$ under $\sigma$ is uniform. Therefore, the expected value of the right-hand side of \eqref{eq:color-sigma-ineq} is
\[
\frac{1}{d+1}(\log q + \log(q-1) + \cdots + \log(q-d)) = \frac{1}{d+1}\log \hom(K_{d+1},K_q),
\]
which proves the theorem.
\end{proof}

What is infimum of $\hom(G, K_q)^{1/v(G)}$ over \emph{bipartite} $d$-regular graphs $G$? The following inequality was proved by Csikv\'ari and Lin~\cite{CL}. For $q \ge d+1$, the constant in the inequality is best possible as it is the limit for any sequence of $d$-regular graphs with increasing girth \cite{BG08}.

\begin{theorem}[\cite{CL}] \label{thm:min-color-bip}
For any $d$-regular bipartite graph $G$ and any $q \ge 2$,
\[
\hom(G,K_q)^{1/v(G)} \ge q (1-1/q)^{d/2}.
\]	
\end{theorem}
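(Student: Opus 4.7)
The plan is to prove the equivalent inequality
\[
\hom(G, K_q) \ge q^{v(G)}(1-1/q)^{e(G)},
\]
(using $e(G) = dv(G)/2$) by a probabilistic correlation argument exploiting the bipartite structure. Interpreting both sides: if $X\colon V(G) \to [q]$ is a uniform random coloring, the inequality says $\Pr[X\text{ proper}] \ge \prod_{uv \in E(G)} \Pr[X(u) \neq X(v)]$, i.e.\ the bichromatic-edge events are positively correlated. The bipartite hypothesis is essential --- $G = K_3$, $q = 3$ gives $\hom(K_3, K_3) = 6 < 8 = q^{v(G)}(1-1/q)^{e(G)}$.

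To set up the correlation, I would write $V(G) = A \sqcup B$ and condition on $\sigma := X|_A$. Since $B$ carries no internal edges, the $X(b)$ are conditionally independent given $\sigma$, yielding
\[
\hom(G, K_q) = q^{|A|}\,\EE_{\sigma \in [q]^A}\prod_{b \in B} c_b(\sigma),\qquad c_b(\sigma) := q - |\sigma(N(b))|,
\]
with $\sigma$ uniform on $[q]^A$. A one-line inclusion--exclusion gives $\EE\, c_b(\sigma) = q(1-1/q)^d$; combined with $|B|d = e(G)$ and $|A| = v(G)/2$, the target reduces to the correlation bound
\[
\EE_\sigma \prod_{b \in B} c_b(\sigma) \;\ge\; \prod_{b \in B} \EE\, c_b(\sigma).
\]

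For the correlation inequality I would use the self-dual expansion $c_b(\sigma) = \sum_{k \in [q]} \prod_{a \in N(b)}\mathbf{1}[\sigma(a) \neq k]$; swapping the sum with the product over $b$ and integrating over $\sigma$ coordinatewise yields
\[
\EE_\sigma \prod_{b \in B} c_b(\sigma) = q^{|B|-|A|}\,\EE_{\vec k \in [q]^B} \prod_{a \in A} D_a(\vec k),\qquad D_a(\vec k) := q - |\{k_b : b \in N(a)\}|,
\]
making the symmetry between the two sides manifest. From here I would try to establish positive correlation via an FKG-type argument on a lattice of colorings obtained by first drawing a uniform random linear order on $[q]$ (to orient each $c_b$ as monotone in a common direction) and then comparing colorings via that induced rank-based partial order.

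The main obstacle is that the $c_b$ are not coordinatewise monotone in $\sigma$ --- swapping $\sigma(a)$ between two colors can raise or lower $c_b$ depending on whether the new color is already present on $N(b) \setminus \{a\}$ --- so neither Harris' inequality nor the lattice FKG applies out of the box. Identifying the correct lattice structure (or, alternatively, a Potts-model interpolation argument showing that for bipartite $G$ the per-edge expected monochromatic density under the Gibbs measure at every $\beta > 0$ is bounded above by its value on a single isolated edge, and integrating from $\beta = 0$ to $\beta = \infty$) is the nontrivial step where the bipartite assumption must enter essentially, separating the bipartite setting from counterexamples like $K_3$.
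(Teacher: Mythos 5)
The paper does not actually prove Theorem~\ref{thm:min-color-bip}; it only cites Csikv\'ari and Lin \cite{CL}, so there is no in-paper argument to compare against, and your proposal must be judged on its own. As it stands it is not a proof. Everything up to your displayed correlation inequality is routine bookkeeping: conditioning on the colors of one side, the identity $\hom(G,K_q)=q^{|A|}\,\EE_\sigma\prod_{b\in B}c_b(\sigma)$, the computation $\EE\,c_b(\sigma)=q(1-1/q)^d$, and the arithmetic $|A|=|B|=v(G)/2$, $d|B|=e(G)$. After this reduction, the inequality $\EE_\sigma\prod_b c_b\ge\prod_b\EE\,c_b$ is exactly equivalent to the theorem (in fact your derivation never uses regularity until the final arithmetic, so it is a strengthening), which means the entire content of the result is concentrated in the one step you leave open. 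You correctly diagnose that the $c_b$ are not coordinatewise monotone, so Harris/FKG does not apply directly; but the proposed remedy --- a rank-based partial order induced by a random linear order on the colors --- is only a hope: no lattice is exhibited, no log-supermodularity or monotonicity is verified, and it is not clear such a structure exists.

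Your fallback via Potts interpolation is a valid reduction scheme: if for every $\beta>0$ the expected monochromatic-edge density of the $q$-state Potts model on a bipartite $d$-regular $G$ is at most its single-edge value $e^{-\beta}/(e^{-\beta}+q-1)$, then integrating $-\tfrac{d}{d\beta}\log Z_G^q(\beta)$ from $\beta=0$ to $\infty$ (using that $Z_G^q(\infty)=\hom(G,K_q)>0$ for bipartite $G$, $q\ge 2$) indeed yields $\hom(G,K_q)\ge q^{v(G)}(1-1/q)^{e(G)}$. But that pointwise-in-$\beta$ bound is itself a strengthening of the theorem, an internal-energy extremality statement of exactly the occupancy-fraction type discussed in Section~\ref{sec:occup} (compare Conjecture~\ref{conj:potts}, whose $K_{d,d}$ analogue is only known for $d=3$), and you offer no proof of it. So the proposal gives a clean equivalent reformulation and a correct accounting of where bipartiteness must enter (your $K_3$ example is a good sanity check), but the key inequality is missing: there is a genuine gap, and neither suggested route closes it as written.
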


\subsection{Widom--Rowlinson model} In the previous two cases, for independent sets and colorings, the minimizing $G$ is $K_{d+1}$, and if we restrict to bipartite $G$, the ``minimizing'' $G$ is locally tree-like. For the Widom--Rowlinson model, we saw in Theorem~\ref{thm:wr} that the quantity $\hom(G, \tikzHwr)^{1/v(G)}$  is maximized, over $d$-regular graphs $G$, by $G = K_{d+1}$. Csikv\'ari~\cite{Csi16ar} recently showed that $\hom(G, \tikzHwr)^{1/v(G)}$ is minimized, over $d$-regular graphs $G$, by a sequence of graphs $G$ with increasing girth, even without the bipartite assumption on $G$.

\section{Related results and further questions} \label{sec:related}

In this final section we mention some related results and problems. Also see the survey~\cite{Cut12} for a related discussion.

\subsection{Independent sets of fixed size}

We saw in Theorems~\ref{thm:kahn} and \ref{thm:zhao} that in the family of $d$-regular graphs on $n$ vertices, a disjoint union of $K_{d,d}$'s maximizes the number of independent sets. It is conjectured that latter maximizes the number of independents of every fixed size. Let $i_t(G)$ denote the number of independent sets of size $t$ in $G$. Recall that $k G$ denotes a disjoint union of $k$ copies of $G$.

\begin{conjecture}[\cite{Kahn01}] \label{conj:ind-fixed-size}
  If $G$ is a $d$-regular graph with $2ad$ vertices, then $i_t(G) \le i_t(a K_{d,d})$ for every $t$.
\end{conjecture}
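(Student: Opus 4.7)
My plan is to mimic the two-step strategy that worked for the unweighted version in Theorems \ref{thm:kahn} and \ref{thm:zhao}: first prove a bipartite version, then reduce the general case to the bipartite case via a size-preserving bipartite swapping trick. The crucial observation enabling the reduction is that the injection $\phi \colon I(2G) \to I(G \x K_2)$ constructed in the proof of Lemma~\ref{lem:indep-bip} preserves the cardinality of the independent set: the swap only relabels the second coordinate of vertices in $A$, so $|\phi(S)| = |S|$. Hence for every $t \ge 0$,
\[
i_t(G)^2 = i_t(2G) \le i_t(G \x K_2),
\]
which reduces Conjecture~\ref{conj:ind-fixed-size} to a bipartite statement about $G \x K_2$ (a bipartite $d$-regular graph on $4ad$ vertices).

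For the bipartite case, I would attempt a refinement of Kahn's entropy method. Let $G'$ be bipartite $d$-regular with bipartition $U \cup W$, let $I$ be a uniformly random independent set of size $t$, and condition on the split $s := |I \cap U|$, so that we analyze $i_{s,t-s}(G')$, the number of independent sets with $s$ vertices in $U$ and $t-s$ in $W$. Shearer's inequality applied to the cover of $U$ by $\{N(w)\}_{w \in W}$ should bound $H(X_U)$ in terms of the local entropies $H(X_{N(w)})$, while the chain rule controls $H(X_W \mid X_U)$. The goal is to show that each local entropy, subject to the size constraint, is maximized when the induced picture looks like $K_{d,d}$; summing and exponentiating should yield the desired coefficient-wise bound. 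An alternative here is to adapt the occupancy-fraction framework of \cite{DJPR1} to a fixed-density ensemble -- one conditions the hard-core model on a prescribed $|I|=t$ and bounds the local neighborhood profile of a uniform random vertex by a linear program, where the vertex count constraint $\EE|I| = t$ enters as an additional linear equality among the $p_k$'s.

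The main obstacle -- and the reason this conjecture is still open -- is that the bipartite reduction above is not tight at fixed size. Applied with the bipartite conjecture on $G \x K_2$, it would yield $i_t(G)^2 \le i_t(2aK_{d,d})$, whereas Conjecture~\ref{conj:ind-fixed-size} requires $i_t(G)^2 \le i_t(aK_{d,d})^2$. Since
\[
i_t(2aK_{d,d}) = \sum_{s=0}^{t} i_s(aK_{d,d})\, i_{t-s}(aK_{d,d})
\]
strictly exceeds $i_t(aK_{d,d})^2$ in general (e.g., for small $t$), the double cover loses information about how an independent set of a given size distributes across the two bipartition classes. To close this gap I would refine both steps simultaneously by working with the two-variable generating polynomial
\[
P_{G'}(\lambda,\mu) = \sum_{I \in I(G')} \lambda^{|I \cap U|}\mu^{|I \cap W|}
\]
for bipartite $G'$ and proving a coefficient-wise domination $[\lambda^s \mu^u] P_{G'} \le [\lambda^s\mu^u]P_{K_{d,d}}^{\otimes b}$ (for $G'$ on $2bd$ vertices). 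The swapping step would then need to be upgraded to track bipartition types: rather than merely matching sizes, the injection should match the bipartite-type $(|I \cap U|, |I \cap W|)$ after swapping, so that the square on the left-hand side decomposes by Cauchy--Schwarz along the bipartite convolution in exactly the shape needed to cancel the convolution on the right.

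The hardest step will almost certainly be the bipartite fixed-bitype bound. Entropy arguments under hard cardinality constraints are notoriously delicate because the indicator variables are no longer approximately independent even conditionally, and the linear-programming form of the occupancy-fraction method must be enlarged to track two marginals at once; it is conceivable that for some $G'$ the local LP admits extremal distributions other than those coming from $K_{d,d}$, in which case the method would need additional global constraints (e.g., a consistency condition relating the occupancy fractions of the two sides of the bipartition) to rule them out.
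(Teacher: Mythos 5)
This statement is an open conjecture of Kahn: the paper does not prove it (it only points to \cite{DJPR1} for the best known partial bounds), and your submission is likewise not a proof but a research plan, as you yourself acknowledge when you say the bipartite fixed-size step and the tightening of the reduction remain to be done. So there is a genuine gap by construction; the entropy and occupancy-fraction refinements you sketch (Shearer under a hard cardinality constraint, a fixed-density linear program, a two-variable bitype polynomial) are plausible directions but none of them is carried out, and the paper offers no argument you could be matching.

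Beyond the acknowledged incompleteness, there is a concrete error at the start of your reduction. The size-preserving property of the injection in Lemma~\ref{lem:indep-bip} is real, but it gives $i_t(2G) \le i_t(G \x K_2)$, and $i_t(2G)$ is \emph{not} $i_t(G)^2$: independent sets of the disjoint union convolve sizes, so $i_t(2G) = \sum_{s=0}^{t} i_s(G)\, i_{t-s}(G)$. The quantity $i_t(G)^2$ counts pairs of size-$t$ sets, i.e.\ size-$2t$ independent sets of $2G$ with a prescribed split, so the most the injection yields in that direction is $i_t(G)^2 \le i_{2t}(G \x K_2)$, which forgets the split entirely. Consequently, even granting a full bipartite fixed-size theorem for $G \x K_2$, you would obtain only a convolution-wise comparison $\sum_s i_s(G) i_{t-s}(G) \le \sum_s i_s(aK_{d,d}) i_{t-s}(aK_{d,d})$ (or the weaker $i_t(G)^2 \le i_{2t}(2aK_{d,d})$), and neither implies the coefficient-wise bound $i_t(G) \le i_t(aK_{d,d})$ demanded by Conjecture~\ref{conj:ind-fixed-size}. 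So the loss you attribute to the double cover is even more severe than your write-up suggests, and the proposed Cauchy--Schwarz/bitype repair would have to overcome it before the (also unproven) bipartite ingredient even becomes relevant.
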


See \cite[Section 8]{DJPR1} for the current best bounds on this problem.

\subsection{Homomorphism with weights}

Theorem~\ref{thm:GT} holds more generally for weighted graph homomorphisms, allowing $H$ to have weights on its vertices and edges. The proof in Section~\ref{sec:prod} also extends to the weighted setting after small modifications. We refer to \cite{GT04} and \cite{LZ15} for details.

\subsection{Biregular graphs}

An $(a,b)$-biregular graph is a bipartite graph such that all vertices on one side of the bipartition have degree $a$, and all vertices on the other side have degree $b$. Theorems~\ref{thm:kahn} and \ref{thm:GT} extend to biregular graphs, stating that for any $(a,b)$-biregular graph $G$ and loop-graph $H$,
\[
\hom(G,H)^{1/v(G)} \le \hom(K_{b,a},H)^{1/(a+b)}.
\]
Both the entropy proof \cite{Kahn01,GT04} and the H\"older's inequality proof \cite{LZ15} (Section~\ref{sec:prod}) extend to biregular graphs. The occupancy method proof \cite{DJPR1} for independent sets (Section~\ref{sec:occup}) also extends to the biregular setting, though one should use two different fugacity parameters for the two vertex parts.

\subsection{Graphs with given degree profile}

Kahn~\cite{Kahn01} made the following conjecture extending Theorem~\ref{thm:kahn} to irregular graphs. We write $d_u$ for the degree of vertex $u \in V(G)$.

\begin{conjecture}[\cite{Kahn01}] \label{conj:kahn-irreg}
  For any graph $G$,
  \[
  i(G) \le \prod_{uv \in E(G)} i(K_{d_u,d_v})^{1/d_ud_v} = \prod_{uv \in E(G)} (2^{d_u} + d^{d_v} - 1)^{1/(d_ud_v)}.
  \]
\end{conjecture}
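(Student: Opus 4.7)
The plan is to first reduce to the bipartite case via the bipartite double cover from Section~\ref{sec:swap}, and then tackle the bipartite case by an entropy or H\"older--Finner argument generalizing Kahn's proof of Theorem~\ref{thm:kahn}.

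For the reduction, note that the bipartite double cover $G \x K_2$ has $V(G \x K_2) = V(G) \x \{0,1\}$ with $v_i$ having the same degree as $v$ in $G$, and each edge $uv \in E(G)$ yields two edges $u_0v_1, u_1v_0 \in E(G \x K_2)$ with the same endpoint degrees. Combining Lemma~\ref{lem:indep-bip} (which holds for arbitrary $G$) with the assumed bipartite case of the conjecture applied to $G \x K_2$,
\[
i(G)^2 \le i(G \x K_2) \le \prod_{uv \in E(G \x K_2)} i(K_{d_u,d_v})^{1/(d_ud_v)} = \prod_{uv \in E(G)} i(K_{d_u,d_v})^{2/(d_ud_v)}.
\]
Taking square roots gives the desired bound for $G$, so the task reduces to proving the bipartite case.

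For bipartite $G$ with bipartition $U \cup W$, I would follow the strategy of Section~\ref{sec:proj}, starting from the identity
\[
i(G) = \sum_{\sigma \colon U \to \{0,1\}} \prod_{w \in W} \paren{1 + \one\sqb{\sigma|_{N(w)} \equiv \mathbf{0}}}
\]
and trying to apply a weighted version of Theorem~\ref{thm:holder-ext} with exponents depending on the degrees. In the $d$-regular case, Theorem~\ref{thm:holder-ext} separates the factors over $W$ with common exponent $d$, producing $\prod_w \snorm{f_w}_d = i(K_{d,d})^{|W|/d}$. The hope is that allowing $w$-dependent exponents $p_w$ and appealing to the generalized Finner inequality (valid when $\sum_{w \in N(u)} 1/p_w \le 1$ for each $u \in U$) will localize each $w$'s contribution to a product over edges incident to $w$, giving a bound of the form $\prod_{w} \prod_{u \in N(w)} i(K_{d_u,d_w})^{1/(d_ud_w)}$; running the same argument symmetrically with $U$ and $W$ swapped and combining via AM--GM would then close the bipartite case.

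The main obstacle will be precisely this exponent matching. The natural edge weight $1/(d_ud_v)$ fails the Shearer-type covering condition, since $\sum_{u \in N(v)} 1/(d_ud_v) = (1/d_v) \sum_{u \in N(v)} 1/d_u$ need not reach $1$, and more generally the bookkeeping of ``each edge contributing $1/(d_ud_v)$'' does not match the exponent structure produced by any single application of Finner's inequality. A new ingredient therefore seems necessary: perhaps a two-round conditioning scheme in which one first reveals $X_v$ at high-degree vertices before applying the Finner/entropy step, or a convexity-based ``degree-rebalancing'' that absorbs the discrepancy between $\sum_{u \in N(v)} 1/d_u$ and $d_v$ in a favorable direction by exploiting concavity of $x \mapsto \log(2^x+\cdots)$. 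I expect this is the crux, and it is what has kept the conjecture open despite the regular bipartite case being understood for nearly two decades.
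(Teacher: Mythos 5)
You have not proved this statement, and neither does the paper: Conjecture~\ref{conj:kahn-irreg} is stated in the survey as an open conjecture of Kahn. The only things the paper records about it are precisely your first step --- that by the bipartite swapping trick of Section~\ref{sec:swap} (Lemma~\ref{lem:indep-bip} applied to $G \x K_2$, whose edges $u_0v_1$ inherit the endpoint degrees $d_u, d_v$) it suffices to prove the conjecture for bipartite $G$ --- and the partial result of \cite{GZ11} that the conjecture holds when the maximum degree is at most $5$. So your reduction is correct and coincides with the paper's remark, but it only relocates the difficulty.

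The genuine gap is the bipartite case, and your sketch does not close it. A single application of Theorem~\ref{thm:holder-ext}, or of Finner's unequal-exponent generalization, attaches one exponent $p_w$ to each $w \in W$ subject to the covering condition $\sum_{w \in N(u)} 1/p_w \le 1$ for every $u \in U$; the resulting bound is then a product of norms $\snorm{f_w}_{p_w}$, each of which is a quantity of the form $i(K_{p_w,d_w})$-type depending on a \emph{single} exponent per $w$. No choice of the $p_w$ makes this factor as $\prod_{u \in N(w)} i(K_{d_u,d_w})^{1/(d_u d_w)}$, i.e.\ with $u$-dependent exponents on each edge, and as you note the natural edge weights $1/(d_u d_v)$ do not even satisfy the covering condition. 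The remedies you name (two-round conditioning, convexity-based degree rebalancing, symmetrizing over the two sides and averaging) are not carried out, and you yourself identify this exponent-matching step as the unresolved crux. As written, the proposal is a valid reduction to the bipartite case plus a research plan for that case, not a proof of the conjecture.
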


By the bipartite reduction in Section~\ref{sec:swap}, it suffices to prove the conjecture for bipartite graphs $G$. Galvin and I \cite{GZ11} proved Conjecture~\ref{conj:kahn-irreg} for all $G$ with maximum degree at most $5$.

The following conjecture, due to Galvin~\cite{Gal06}\footnote{A bipartite assumption on $G$ is missing in \cite[Conjecture 1.5]{Gal06}.}, extends Theorem~\ref{thm:GT} and the bipartite case of Conjecture~\ref{conj:kahn-irreg}.

\begin{conjecture}
  For any bipartite graph $G$ and loop-graph $H$,
  \[
  \hom(G,H) \le \prod_{uv \in E(G)} \hom(K_{d_u,d_v},H)^{1/(d_u d_v)}.
  \]
\end{conjecture}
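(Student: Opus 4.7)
My plan is to extend the generalized H\"older approach of Section~\ref{sec:proj} (specifically the proof of Theorem~\ref{thm:GT}) to irregular degree sequences, using a weighted variant of Finner's inequality (Theorem~\ref{thm:holder-ext}). Let $V(G) = U \cup W$ be the bipartition. Exactly as in that proof, write
\[
\hom(G,H) = \sum_{\phi \colon U \to V(H)} \prod_{w \in W} f_w(\phi(u) : u \in N(w)),
\]
where $f_w(y_1,\dots,y_{d_w}) = |\{z \in V(H) : y_iz \in E(H) \text{ for all } i\}|$ is the common neighborhood size in $H$. A direct computation gives $\|f_w\|_p^p = \hom(K_{d_w,p},H)$ for every $p \ge 1$.

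The next step would be a weighted form of Theorem~\ref{thm:holder-ext}: for any nonnegative weights $(\alpha_w)_{w \in W}$ satisfying the fractional covering condition $\sum_{w \in N(u)} \alpha_w \ge 1$ for every $u \in U$, one has
\[
\hom(G,H) \le \prod_{w \in W} \|f_w\|_{1/\alpha_w} = \prod_{w \in W} \hom(K_{d_w, 1/\alpha_w}, H)^{\alpha_w}.
\]
By symmetry (swapping the roles of $U$ and $W$), one also gets $\hom(G,H) \le \prod_{u \in U} \hom(K_{d_u, 1/\beta_u}, H)^{\beta_u}$ for weights $(\beta_u)$ with $\sum_{u \in N(w)} \beta_u \ge 1$. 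I would then take the geometric mean of such bounds, optimized over choices of $(\alpha_w)$ and $(\beta_u)$, and try to match the conjectured right-hand side $\prod_{uv} \hom(K_{d_u,d_v},H)^{1/(d_ud_v)}$. The natural candidate, $\alpha_w = \tfrac{1}{d_w}\sum_{u \in N(w)} \tfrac{1}{d_u}$, is suggested by the decomposition $\sum_{uv} \tfrac{1}{d_ud_v} = \sum_w \alpha_w$ and is tight when $G$ is regular or biregular.

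The main obstacle, and the reason this conjecture is open, lies in two places. First, the candidate weights above typically fail the covering condition $\sum_{w \in N(u)} \alpha_w \ge 1$ for irregular $G$, so one cannot directly apply Finner. Second, even granting appropriate weights, comparing the H\"older bound $\hom(K_{d_w,1/\alpha_w},H)^{\alpha_w}$ at each $w$ with the edge-product $\prod_{u \in N(w)} \hom(K_{d_u,d_w},H)^{1/(d_ud_w)}$ requires a nontrivial log-submodularity (or generalized log-concavity) property of the function $(a,b) \mapsto \log \hom(K_{a,b},H)$, which I would need to extract from the structure of $H$. I would attempt this last step first for $H$ in the bipartite-swapping class of Section~\ref{sec:swap} and use the tensor-product closure of Section~\ref{sec:closure-tensor} to propagate the inequality, while for $H = K_q$ one could try to import the occupancy-fraction / Potts-model differential technique of Section~\ref{sec:occup}. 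My expectation is that a fully general proof will require a genuinely new combinatorial input beyond Finner's inequality, analogous to how Radhakrishnan's entropy proof of Bregman's permanent inequality requires a random-order conditioning argument rather than a single application of Shearer.
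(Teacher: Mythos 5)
There is nothing in the paper to check your argument against: this statement appears there as an open conjecture (attributed to Galvin), strengthening Theorem~\ref{thm:GT} and containing the bipartite case of Conjecture~\ref{conj:kahn-irreg}, for which the paper reports only partial progress (maximum degree at most $5$) by other methods. Your proposal, as you yourself say, is a plan rather than a proof, so the relevant question is whether the gaps you flag are the real ones. Your setup is correct, and in fact your \emph{second} worry is not an obstacle at all. Since $f_w$ is a function of $d_w$ variables with $\|f_w\|_{p}^{p}=\hom(K_{d_w,p},H)$ for integer $p$, the $w$-factor of the conjectured right-hand side is $\prod_{u\in N(w)}\hom(K_{d_u,d_w},H)^{1/(d_ud_w)}=\prod_{u\in N(w)}\|f_w\|_{d_u}^{1/d_w}$, and your candidate weight $\alpha_w=\frac1{d_w}\sum_{u\in N(w)}\frac1{d_u}$ writes $\alpha_w$ as a convex combination of the $1/d_u$ with weights $1/d_w$; Littlewood's interpolation inequality (log-convexity of norms) then gives $\|f_w\|_{1/\alpha_w}\le\prod_{u\in N(w)}\|f_w\|_{d_u}^{1/d_w}$ directly. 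No log-submodularity of $(a,b)\mapsto\log\hom(K_{a,b},H)$ is needed, and the non-integer-exponent issue disappears because one never needs to reinterpret $\|f_w\|_{1/\alpha_w}$ as a homomorphism count.

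This means that if your weighted Finner step were applicable with the natural weights, the whole conjecture would follow in a few lines --- which is exactly why it cannot be, and the genuine gap is the first obstacle you name: the fractional covering condition. Concretely, let $u$ be a pendant vertex attached to $w$, where $w$ has one other neighbor $u'$ of degree $D$ (say $u'$ is also joined to $D-1$ leaves on the $W$ side, keeping $G$ bipartite). Then $\alpha_w=\frac12+\frac1{2D}<1$ while $N(u)=\{w\}$, so $\sum_{w'\in N(u)}\alpha_{w'}<1$ and Theorem~\ref{thm:holder-ext} in its weighted form simply does not apply. Inflating the weights to restore feasibility only increases each factor $\|f_w\|_{1/\alpha_w}$ (for counting measure the norm grows as the index shrinks), and there is no evident way to get back down to the conjectured edge product; averaging the $U$-side and $W$-side bounds does not help either, since each side needs its own covering condition. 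So the entire difficulty is concentrated in replacing the projection inequality by something adapted to non-covering weights, and your closing assessment --- that a genuinely new input is required beyond a routine weighting of Theorem~\ref{thm:holder-ext}, much as Br\`egman's inequality needs more than a single application of Shearer --- is the right one; the side routes you mention (bipartite swapping targets, tensor closure, occupancy for $K_q$) could at best treat special targets $H$, not the general statement.
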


\subsection{Graphs with additional local constraints}

We saw in Theorem~\ref{thm:zhao} and Theorem~\ref{thm:ind-min} that the maximum and minimum of $i(G)^{1/v(G)}$ among $d$-regular graphs $G$ are attained by $K_{d,d}$ and $K_{d+1}$ respectively. What if we impose additional ``local'' constraints to disallow $K_{d,d}$ and $K_{d+1}$? For example, consider the following.
\begin{itemize}
\item What is the infimum of $i(G)^{1/v(G)}$ among $d$-regular triangle-free graphs $G$?
\item What is the supremum of $i(G)^{1/v(G)}$ among $d$-regular graphs $G$ that do not contain any cycles of length 4?
\end{itemize}
These two questions were recently answered by Perarnau and Perkins~\cite{PP}.

\begin{theorem} \label{thm:ind-girth}
(a) Among 3-regular triangle-free graphs $G$, the quantity $i(G)^{1/v(G)}$ is minimized when $G$ is the Petersen graph.

(b) Among 3-regular graphs $G$ without cycles of length 4, the quantity $i(G)^{1/v(G)}$ is maximized when $G$ is the Heawood graph.
\end{theorem}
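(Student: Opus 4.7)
The plan is to apply the occupancy fraction method of Davies, Jenssen, Perkins, and Roberts (as in Section~\ref{sec:occup}), but with the local neighborhood enlarged from $N(v)$ to the ball of radius $2$ around $v$, where the structural hypothesis (no triangle in~(a), no $4$-cycle in~(b)) can be fully detected. Recall that $\log P_G(\lambda)/v(G) = \int_0^\lambda \alpha_G(t)\, t^{-1}\, dt$, so a pointwise inequality on $\alpha_G(\lambda)$ gives the corresponding inequality on $i(G)^{1/v(G)} = P_G(1)^{1/v(G)}$. For part~(a) I would aim to show $\alpha_G(\lambda)\ge \alpha_{\mathrm{Petersen}}(\lambda)$ for every $3$-regular triangle-free $G$ and every $\lambda>0$, and dually $\alpha_G(\lambda)\le\alpha_{\mathrm{Heawood}}(\lambda)$ in part~(b); the final result then follows by integrating from $0$ to $1$.

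To set up the linear program, draw $I$ from the hard-core model at fugacity $\lambda$, pick a uniformly random vertex $v$, and record the isomorphism type of the $2$-ball $B_2(v)$ together with the configuration $C$ of $I\cap B_2(v)$. Let $p_C$ denote the probability of each such labeled configuration, and let $\mathcal{C}_v$ denote the set of configurations in which $v$ is occupied, so that $\alpha_G(\lambda)=\sum_{C\in\mathcal{C}_v} p_C$. Triangle-freeness forces $N(v)$ to be an independent set and restricts which ball types can arise; $C_4$-freeness further forbids ball types in which two vertices of $N(v)$ share a common neighbor other than $v$. On the variables $p_C$ one imposes, in analogy with the proof of Theorem~\ref{thm:occupancy}, normalization $\sum_C p_C=1$, swap-type inequalities comparing configurations that differ by adding or removing a single occupied vertex (variants of~\eqref{eq:occ-descend}), and a regularity/consistency constraint saying that the distribution of the configuration viewed from a uniformly random neighbor of $v$ coincides with the distribution viewed from $v$ itself.

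For part~(a), the Petersen graph has girth $5$, so each $2$-ball is a tree on $10$ vertices of a prescribed shape; this is the ``most spread-out'' legal local profile, and intuitively the one where the three neighbors of~$v$ are collectively least likely to be uncovered, which should minimize $\alpha_G(\lambda)$. One then solves the LP over allowed $2$-ball types to obtain $\alpha_G(\lambda)\ge \alpha_{\mathrm{Petersen}}(\lambda)$. Part~(b) is analogous with inequalities reversed: among $C_4$-free $3$-regular graphs, the Heawood graph has $2$-balls that are trees on $14$ vertices (bipartite, girth $6$), and this maximally tree-like local structure permits $v$ to be occupied as often as possible.

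The main obstacle is showing that the LP optima equal $\alpha_{\mathrm{Petersen}}(\lambda)$ and $\alpha_{\mathrm{Heawood}}(\lambda)$, respectively, at every $\lambda>0$. The number of $2$-ball configurations is large, and one must produce an explicit dual certificate, i.e., nonnegative multipliers on the constraints whose induced bound is attained at the extremal configuration. The natural strategy is to guess the tight constraints from the symmetry of the extremal graph (every $2$-ball of Petersen, and of Heawood, is isomorphic), then verify dual feasibility for all $\lambda>0$. A secondary difficulty is uniqueness: to conclude that \emph{only} Petersen (respectively, Heawood) is extremal, one needs the pointwise bound to be strict at every $\lambda$ whenever any vertex of $G$ fails to have the extremal local profile, which requires tracking the equality case through the LP.
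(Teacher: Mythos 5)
You have correctly identified the method behind this result: the survey gives no proof of Theorem~\ref{thm:ind-girth} (it is quoted from Perarnau and Perkins \cite{PP}, with the remark that it was proved by the occupancy method of Section~\ref{sec:occup}), and your plan---bound the occupancy fraction $\alpha_G(\lambda)$ by a linear program over local configurations and integrate---is indeed the shape of the cited argument. But as written this is a plan, not a proof: the entire mathematical content is the step you defer, namely formulating a correct and \emph{sufficient} family of constraints on the $2$-ball configuration probabilities and exhibiting dual certificates showing that the LP optimum equals $\alpha_{\mathrm{Petersen}}(\lambda)$ (resp.\ $\alpha_{\mathrm{Heawood}}(\lambda)$). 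Nothing in the proposal shows that normalization, swap inequalities, and the random-neighbor consistency condition suffice; for the plain one-neighborhood LP of Theorem~\ref{thm:occupancy} the optimum is $\alpha_{K_{3,3}}(\lambda)$, and the hard part of \cite{PP} is exactly finding the extra girth-driven constraints (and the $\lambda$-dependent dual analysis) that move the optimum to the Petersen/Heawood values. Also note that in part (b) only $C_4$ is excluded, so your LP must admit $2$-ball types containing triangles; your sketch implicitly assumes tree-like balls there.

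There is also a concrete error in your stated goal for part (a): the pointwise inequality $\alpha_G(\lambda)\ge\alpha_{\mathrm{Petersen}}(\lambda)$ for \emph{every} $\lambda>0$ is false, so no LP argument can deliver it. As $\lambda\to\infty$ the occupancy fraction tends to the independence ratio; the Petersen graph has ratio $2/5$, while there exist triangle-free cubic graphs with ratio $5/14<2/5$ (the extremal examples for Staton's $5/14$ bound), and for such graphs $\alpha_G(\lambda)<\alpha_{\mathrm{Petersen}}(\lambda)$ once $\lambda$ is large. Accordingly, the occupancy-fraction lower bound is proved in \cite{PP} only for a bounded range of fugacities (in particular $\lambda\le 1$), which is all that is needed, since $\log i(G)/v(G)=\int_0^1 \alpha_G(t)\,t^{-1}\,dt$. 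You should restrict the part (a) claim to $\lambda\in(0,1]$ and expect the LP/dual analysis to make essential use of that restriction; for part (b) an all-$\lambda$ bound is at least consistent (the Heawood graph attains the maximal possible ratio $1/2$), but it still has to be proved, not asserted from the ``most tree-like local structure'' heuristic.
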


\begin{center}
\begin{tabular}{ccc}
\begin{tikzpicture}[scale=.6,P/.style={draw, circle, black, fill, inner sep = 0pt, minimum width = 5pt}]
\foreach \x in {0,1,2,3,4}{
	\node[P] (a\x) at ($(\x * 360 / 5 + 90:1)$) {};
	\node[P] (b\x) at ($(\x * 360 / 5 + 90:2)$) {};
	\draw (a\x)--(b\x);
}
\draw (b0)--(b1)--(b2)--(b3)--(b4)--(b0);
\draw (a0)--(a2)--(a4)--(a1)--(a3)--(a0);
\end{tikzpicture} 
& \hspace{6em} &
\begin{tikzpicture}[scale=.6,P/.style={draw, circle, black, fill, inner sep = 0pt, minimum width = 5pt}]
\foreach \x in {0,...,13}{
    \pgfmathsetlengthmacro{\theta}{(\x-.5) * 360 / 14 + 90}
	\node[P] (\x) at ({\theta}:2) {};
}
\draw (0)--(1)--(2)--(3)--(4)--(5)--(6)--(7)--(8)--(9)--(10)--(11)--(12)--(13)--(0);
\draw (1)--(6) (2)--(11) (3)--(8) (4)--(13) (5)--(10) (7)--(12) (9)--(0);
\end{tikzpicture}
\\
Petersen graph & & Heawood graph
\end{tabular}
\end{center}

Theorem~\ref{thm:ind-girth} was proved using the occupancy method discussed in Section~\ref{sec:occup}. The following general problem is very much open.

\begin{problem} \label{prb:local-constraints}
  Let $d \ge 3$ be an integer and $\cF$ be a finite list of graphs. Determine the infimum and supremum of $i(G)^{1/v(G)}$ among $d$-regular graphs $G$ that do not contain any element of $\cF$ as an induced subgraph.
\end{problem}

We pose the following (fairly bold) conjecture that the extrema are always attained by finite graphs. 

\begin{conjecture}[Local constraints imply bounded extrema]
  Let $d \ge 3$ be an integer and $\cF$ be a finite list of graphs. Let $\cG_d(\cF)$ denote the set of finite $d$-regular graphs that do not contain any element of $\cF$ as an induced subgraph. Then there exist $G_{\min}, G_{\max} \in \cG_d(\cF)$ such that for all $G \in \cG_d(\cF)$,
  \[
  i(G_{\min})^{1/v(G_{\min})} \le i(G)^{1/v(G)} \le i(G_{\max})^{1/v(G_{\max})}.
  \]
\end{conjecture}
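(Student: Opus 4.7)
My plan is to lift the problem into the Benjamini--Schramm local weak limit framework, where compactness becomes available. Let $\mathcal{U}_d$ denote the space of unimodular random rooted $d$-regular graphs under local weak convergence, and let $\mathcal{U}_d(\cF) \subseteq \mathcal{U}_d$ be the closed subspace of measures concentrated on rooted graphs in which no $F \in \cF$ appears as an induced subgraph. Closedness follows because, for each finite $F$, being an induced subgraph through the root is a condition on a ball of radius at most $|V(F)|$, which is continuous under local weak convergence; the subspace is then compact as a closed subset of the compact $\mathcal{U}_d$. By results of Csikv\'ari and Frenkel on continuity of free energies along Benjamini--Schramm convergent sequences, the functional $\psi(G) := \tfrac{1}{v(G)} \log i(G)$ extends continuously from $\cG_d(\cF)$ to a map $\psi \colon \mathcal{U}_d(\cF) \to \RR$. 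Compactness plus continuity then yield extremal measures $\mu_{\max}, \mu_{\min} \in \mathcal{U}_d(\cF)$ on which $\psi$ attains its supremum and infimum.

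The principal task is to exhibit finite graphs $G_{\max}, G_{\min} \in \cG_d(\cF)$ that realize $\psi(\mu_{\max})$ and $\psi(\mu_{\min})$ exactly. I would combine a local surgery with a linear programming relaxation in the spirit of the occupancy method of Section~\ref{sec:occup}. For each radius $r$, encode $\mu \in \mathcal{U}_d(\cF)$ by its distribution of rooted $r$-neighborhoods, and express $\psi(\mu)$ as a limit in $r$ of functionals of these local profiles. The profiles realizable by some $\mu \in \mathcal{U}_d(\cF)$ satisfy linear equality constraints (from unimodularity and $d$-regularity) together with inequality constraints (from induced-subgraph avoidance and from monotonicity relations analogous to \eqref{eq:occ-descend} in the second proof of Theorem~\ref{thm:occupancy}). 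One then hopes that the extremum of $\psi$ over these profile vectors is attained at an extreme point of the resulting polytope, and that each such extreme point corresponds to a periodic neighborhood profile realizable by a single finite $d$-regular graph in $\cG_d(\cF)$ --- which would serve as $G_{\max}$ or $G_{\min}$. The bipartite double cover and tensor product tricks of Sections~\ref{sec:swap} and~\ref{sec:prod} provide candidate surgery moves to verify feasibility of perturbations while staying in $\cG_d(\cF)$.

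The hard part, which I expect to be decisive, is that the ``extremum attained by a finite graph'' conclusion genuinely fails when $\cF$ is infinite: Csikv\'ari's theorem shows that among bipartite $d$-regular graphs (formally, $\cF$ equal to the infinite set of all odd cycles), the infimum of $\psi$ is attained only as a locally tree-like limit, never by any finite graph. Any proof of the conjecture must therefore use finiteness of $\cF$ in an essential way. The natural heuristic is that whenever $\cF$ is finite, the $d$-regular tree-like unimodular measure lies in $\mathcal{U}_d(\cF)$, so if a tree-like measure were extremal one ought to be able to improve $\psi$ strictly by locally introducing short cycles compatible with every constraint in $\cF$ --- producing a nearby finite graph with strictly better $\psi$, contradicting extremality. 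Making this strict improvement quantitative, and uniform across the compact space $\mathcal{U}_d(\cF)$, appears to need a new combinatorial or analytic ingredient beyond the entropy, H\"older, and occupancy methods surveyed here, since those techniques all certify inequalities but do not certify attainment at a specific finite graph.
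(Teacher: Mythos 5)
The statement you are trying to prove is posed in the paper as an open conjecture (the paper itself calls it ``fairly bold'') and no proof is given there, so there is nothing to compare your argument against; the only question is whether your proposal constitutes a proof, and it does not --- you concede as much in your final paragraph, where the decisive step (``a new combinatorial or analytic ingredient'') is left as a hope rather than an argument. Beyond that admitted gap, your very first step is already flawed: the functional $\psi(G) = \tfrac{1}{v(G)}\log i(G)$ does \emph{not} extend continuously to the space of Benjamini--Schramm limits of $d$-regular graphs. Two sequences with the same local weak limit can have different limiting values of $\psi$: for $d \ge 6$ (where $\lambda = 1$ lies above the uniqueness threshold of the hard-core model on the $d$-regular tree), random $d$-regular bipartite graphs and random $d$-regular graphs both converge locally to the $d$-regular tree, yet their normalized independent-set counts converge to different constants --- this is precisely the phenomenon behind the Sly--Sun and Csikv\'ari results cited in Section~\ref{sec:min} of the paper. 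The Csikv\'ari--Frenkel continuity results you invoke apply to the matching polynomial and to models whose relevant parameter stays in a zero-free or uniqueness regime, not to $i(G)$ at $\lambda = 1$ in general. So ``compactness plus continuity yield extremal measures'' fails at the outset, and the intended variational framework is not even well defined as stated.

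Even if one repaired this by working with a suitable upper/lower envelope of $\psi$ over graphs with a given local statistic, the remaining steps are programmatic rather than proved: the occupancy-style linear constraints on neighborhood profiles are necessary but far from sufficient, so an optimum of the LP relaxation need not be realizable by any unimodular measure, let alone by a finite graph in $\cG_d(\cF)$; the claim that extreme points of the profile polytope correspond to ``periodic'' profiles realizable by a single finite graph is exactly the kind of attainment statement the conjecture asks for and is not established; and the ``strict improvement by introducing short cycles'' heuristic is not quantified, nor is it clear it preserves membership in $\cG_d(\cF)$ when $\cF$ contains small dense graphs. In short, your proposal is a reasonable research plan that correctly identifies where finiteness of $\cF$ must enter, but it is not a proof, and its foundational continuity claim is false as stated.
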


It would be interesting to know which graphs can arise as extremal graphs in this manner. On the other hand, imposing bipartiteness induces a very different behavior (Section~\ref{sec:min}). See \cite{CR16ar,DJPR2,PP} for discussions of related results and conjectures.

\subsection{Graphs with a given number of vertices and edges}

Let $V(G) =\{1, 2, \dots, n\}$. Let $L_{n,m}$ denote the graph on $n$ vertices obtained by including the first $m$ edges in lexicographic order, i.e., $12, 13, \dots, 1n, 23,24,\dots$. Recall that $i_t(G)$ is the number of independent sets of size $t$ in $G$. The following result is a consequence of the Kruskal--Katona theorem \cite{Kru63,Kat68}.

\begin{theorem}
  For any graph $G$ with $n$ vertices and $m$ edges, and positive integer $t$, one has $i_t(G) \le i_t(L_{n,m})$.
\end{theorem}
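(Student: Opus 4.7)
The plan is to recast the problem as an iterated upper-shadow inequality and deduce it from the Kruskal--Katona theorem. Identify $V(G) = [n]$, so $E(G) \subseteq \binom{[n]}{2}$. A $t$-subset of $[n]$ is an independent set of $G$ exactly when it contains no edge, so defining $\mathcal{N}_t(G) := \{T \in \binom{[n]}{t} : T \supseteq e \text{ for some } e \in E(G)\}$, we have $i_t(G) = \binom{n}{t} - |\mathcal{N}_t(G)|$, and the statement becomes $|\mathcal{N}_t(G)| \ge |\mathcal{N}_t(L_{n,m})|$. Writing $\nabla$ for the one-step upper-shadow operator $\nabla \mathcal{A} := \{S \in \binom{[n]}{k+1} : S \supseteq A \text{ for some } A \in \mathcal{A}\}$ on $\mathcal{A} \subseteq \binom{[n]}{k}$, a short induction on $t$ identifies $\mathcal{N}_t(G) = \nabla^{t-2} E(G)$.

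The next step is to invoke the upper-shadow form of Kruskal--Katona: among $k$-uniform families in $[n]$ of a given cardinality, $|\nabla \mathcal{A}|$ is minimized by the lex-initial segment. This is a routine corollary of the usual (colex, lower-shadow) version via complementation in $[n]$, which swaps the two shadow operations and takes lex-initial segments to colex-initial ones. The structural point that makes the iteration work is that the upper shadow of a lex-initial segment in $\binom{[n]}{k}$ is itself a lex-initial segment in $\binom{[n]}{k+1}$, a direct unwinding of the lex definition.

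To conclude, let $f_k(s) := \min\{|\nabla \mathcal{A}| : \mathcal{A} \subseteq \binom{[n]}{k}, |\mathcal{A}| = s\}$, so Kruskal--Katona says $f_k(s)$ is attained by the lex-initial segment of size $s$, and monotonicity of $\nabla$ under inclusion makes $f_k$ non-decreasing in $s$. An induction on $j$ then gives $|\nabla^j E(G)| \ge |\nabla^j E(L_{n,m})|$: the step from $j$ to $j+1$ reads $|\nabla^{j+1} E(G)| \ge f_{j+2}(|\nabla^j E(G)|) \ge f_{j+2}(|\nabla^j E(L_{n,m})|) = |\nabla^{j+1} E(L_{n,m})|$, where the final equality uses lex-preservation of $\nabla^j E(L_{n,m})$. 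Taking $j = t-2$ yields the theorem. The only load-bearing ingredients needing care are the lex-preservation property of $\nabla$ and the upper-shadow reformulation of Kruskal--Katona; once these are in place the argument is a mechanical iteration.
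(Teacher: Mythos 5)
Your proposal is correct and follows exactly the route the paper intends: the paper states the result as a direct consequence of the Kruskal--Katona theorem without supplying details, and your argument (rewriting the non-independent $t$-sets as the iterated upper shadow $\nabla^{t-2}E(G)$, invoking the lex-initial/upper-shadow form of Kruskal--Katona, and using that $\nabla$ preserves lex-initial segments to iterate) is the standard way to fill them in. The only cosmetic points: complementation alone carries colex-initial segments to initial segments of lex with respect to the \emph{reversed} ground-set order, so one should also relabel $i \mapsto n+1-i$ (harmless, since all counts are invariant under relabeling), and the cases $t=1,2$ should be noted as trivial since $\nabla^{t-2}$ only makes sense for $t \ge 2$.
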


Reiher's clique density theorem~\cite{Rei16} solves the corresponding minimization problem, which is significantly more difficult. See \cite{LPS10,MN15} and their references for results and conjectures on the analogous problem of maximizing the number of proper $q$-colorings in a graph with a given number of vertices and edges, and \cite{CK,CR11,CR14jgt} for graph homomorphisms.

\subsection{Minimum degree condition}
What if we relax the $d$-regular condition in Theorem~\ref{thm:zhao} to minimum degree $d$? The following result was conjectured by Galvin~\cite{Gal11} and proved by Cutler and Radcliffe~\cite{CR14}.

\begin{theorem}[\cite{CR14}]
  Let $\delta \le n/2$. Let $G$ be an $n$-vertex graph with minimum degree at least $\delta$. Then $i(G) \le i(K_{\delta,n-\delta})$.
\end{theorem}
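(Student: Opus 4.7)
The target count is $i(K_{\delta, n-\delta}) = 2^\delta + 2^{n-\delta} - 1$: every independent set in a complete bipartite graph is contained in one of the two parts, with the empty set counted twice. My plan is to induct on $n$, with $\delta \le n/2$ allowed to vary. The main tool is the vertex-deletion identity $i(G) = i(G - v) + i(G - N[v])$ (partitioning by whether $I$ contains $v$), together with the trivial bound $i(G - N[v]) \le 2^{n - \deg(v) - 1}$. Since the target $2^\delta + 2^{n - \delta} - 1$ is a decreasing function of $\delta$ on $[0, n/2]$, I may assume the minimum degree of $G$ is exactly $\delta$.

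Two clean cases close the induction in a single step. \emph{Case A:} some vertex $v$ with $\deg(v) = \delta$ has no neighbor of degree $\delta$. Then $G - v$ has minimum degree at least $\delta$ on $n - 1$ vertices, so by induction $i(G - v) \le 2^\delta + 2^{n - 1 - \delta} - 1$, and adding $i(G - N[v]) \le 2^{n - \delta - 1}$ yields exactly $2^\delta + 2^{n - \delta} - 1$. \emph{Case B:} some vertex $v$ has $\deg(v) \ge n - \delta$. Then the sizes swap: $i(G - N[v]) \le 2^{\delta - 1}$, while $G - v$ has minimum degree at least $\delta - 1$ on $n - 1$ vertices, so induction (applicable since $\delta - 1 \le (n-1)/2$) gives $i(G - v) \le 2^{\delta - 1} + 2^{n - \delta} - 1$, and the total is again $2^\delta + 2^{n - \delta} - 1$. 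Case B also handles the boundary $n = 2\delta$, since there $n - \delta = \delta$ and every vertex automatically has degree $\ge n - \delta$.

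The main obstacle is the residual case $n > 2\delta$ in which (i) the maximum degree is strictly less than $n - \delta$ and (ii) every degree-$\delta$ vertex has a degree-$\delta$ neighbor. Here a single-vertex recursion is not tight: the natural two-vertex recursion on an adjacent pair $u \sim v$ of degree-$\delta$ vertices,
\[
i(G) = i(G - \{u, v\}) + i(G - N[u]) + i(G - N[v]),
\]
combined with induction applied to $G - \{u, v\}$ at minimum degree $\ge \delta - 2$, yields only $2^{\delta - 2} + 2^{n - \delta + 1} - 1$, which exceeds the target by roughly $2^{n - \delta} - 3 \cdot 2^{\delta - 2}$ when $n > 2\delta$. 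I expect to close this gap via a local compression step: a modification of $G$ (for instance, pivoting an edge at a degree-$\delta$ vertex) that preserves the minimum-degree condition $\ge \delta$, does not decrease $i(G)$, and strictly reduces the number of adjacent degree-$\delta$ pairs, eventually driving the argument back into Case A. Producing such a move and verifying its effect on both the degree sequence and the count $i(G)$ in every hard-case configuration is the main technical obstacle I anticipate.
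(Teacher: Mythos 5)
Your Cases A and B are correct: the identity $i(G)=i(G-v)+i(G-N[v])$, the bound $i(G-N[v])\le 2^{n-\deg(v)-1}$, and the two computations do give exactly $2^{\delta}+2^{n-\delta}-1$, and Case B does cover the boundary $n=2\delta$ that Case A's induction cannot reach. (A small wrinkle: ``assume the minimum degree is exactly $\delta$'' is not legitimate when the minimum degree exceeds $n/2$, but that situation is again absorbed by Case B after replacing $\delta$ by $\lfloor n/2\rfloor$, so it is repairable.) The problem is that what you call the residual case is not a leftover --- it is the theorem. Every $\delta$-regular graph with $n>2\delta$ has maximum degree $<n-\delta$ and every degree-$\delta$ vertex adjacent to another one, so your argument as written proves nothing for, say, any connected regular graph. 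The proposed fix, a local pivot that preserves minimum degree $\ge\delta$, never decreases $i(G)$, and strictly reduces the number of adjacent degree-$\delta$ pairs, is only a hope: no such move is exhibited, and its key property (monotonicity of $i$ under a local edge switch) is precisely the hard part, since edge pivots can easily destroy independent sets. Moreover there is no slack for such a scheme: $C_5$ (so $n=5$, $\delta=2$) lies squarely in your residual case and attains equality, $i(C_5)=11=i(K_{2,3})$, so any compression sequence started there would have to preserve $i(G)$ \emph{exactly} at every step while rearranging degrees; verifying that for all configurations is essentially as hard as the theorem itself.

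For what it is worth, this survey states the result without proof, citing Cutler and Radcliffe [CR14], so there is no in-paper argument to compare against line by line; their proof is not a one-vertex-at-a-time induction of the kind you set up. As it stands, your write-up establishes the bound only for graphs that contain either a vertex of degree at least $n-\delta$ or a degree-$\delta$ vertex with no degree-$\delta$ neighbor; to salvage the approach you would need either an explicit, verified compression move with the stated monotonicity, or a different handling of (near-)regular graphs in the residual case.
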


More generally, for any $\delta < n$, write $n = a(n-\delta)+b$ with $a$ and $b$ nonnegative integers and $b < n-\delta$,  one has $i(G) \le i(\ol{a K_{n-\delta} \cup K_b}) = a(2^{n-\delta} - 1) + 2^b$ for any graph $G$ on $n$ vertices with minimum degree at least $\delta$. Here $\ol{G}$ denotes the edge-complement of $G$.

The following strengthening was conjectured by Engbers and Galvin~\cite{EG14}. It was proved by Alexander, Cutler, and Mink~\cite{ACM12} for bipartite graphs, and proved by Gan, Loh, and Sudakov~\cite{GLS15} in general. Recall that $i_t(G)$ is the number of independent sets of size $t$ in $G$.

\begin{theorem}[\cite{GLS15}]
  Let $\delta \le n/2$ and $t \ge 3$. Let $G$ be an $n$-vertex graph with minimum degree at least $\delta$. Then $i_t(G) \le i_t(K_{\delta,n-\delta})$.
\end{theorem}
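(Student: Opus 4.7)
My plan is to analyze the structure of an extremal graph and show it must be $K_{\delta, n-\delta}$. Take $G$ on $n$ vertices with minimum degree at least $\delta$ maximizing $i_t(G)$, and among such maximizers take $G$ with the fewest edges (possible since deleting any edge can only weakly increase $i_t$). Every edge $uv \in E(G)$ must then satisfy $\min\{d(u), d(v)\} = \delta$, for otherwise we could delete $uv$ and keep minimum degree at least $\delta$. Consequently the set $B := \{v \in V(G) : d(v) > \delta\}$ is independent in $G$, and every vertex of $A := V(G) \setminus B$ has degree exactly $\delta$.

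The next step is to show that $A$ is also independent, so that $G$ is bipartite with parts $A$ and $B$. I would try a local $2$-switch: if there is an edge $xy$ inside $A$, exchange a pair of edges to move $xy$ out of $A$ and into the $A$--$B$ cut, producing $G'$ with $i_t(G') \ge i_t(G)$, strictly fewer $A$-internal edges, and minimum degree still at least $\delta$. Iterating leaves $A$ independent. In the resulting bipartite graph, each $a \in A$ has all $\delta$ of its neighbors in $B$, which forces $|B| \ge \delta$.

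The final step is to upgrade the $\delta$-regular-on-$A$ bipartite structure to $G = K_{\delta, n-\delta}$ itself. For such bipartite $G$ with parts $A, B$, I would decompose
\[
i_t(G) = \sum_{j=0}^{t} \sum_{\substack{S_A \subseteq A \\ |S_A| = t-j}} \binom{|B \setminus N(S_A)|}{j},
\]
using that $B$ is independent, so any $j$-subset of $B \setminus N(S_A)$ together with $S_A$ is an independent set. A Kruskal--Katona-style shadow bound on $|N(S_A)|$ would then show that this sum is maximized precisely when $|B| = \delta$ and every vertex of $A$ is adjacent to every vertex of $B$, pinning down $G = K_{\delta, n-\delta}$.

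The hardest step, I expect, is the $2$-switch removing edges inside $A$. The minimum-degree constraint is global, so the switch must be chosen carefully to preserve it while provably not decreasing $i_t$, and there is no a priori reason the same switch works for every $t$. The hypothesis $t \ge 3$ must enter essentially here, because the theorem genuinely fails at $t = 2$: one has $i_2(G) = \binom{n}{2} - |E(G)|$, and since the minimum-degree condition only forces $|E(G)| \ge \delta n/2$, a $\delta$-regular graph beats $K_{\delta, n-\delta}$ whenever $\delta < n/2$. Any successful argument must therefore leverage an asymmetry visible only to independent sets of size at least three, for instance the fact that an $A$-internal edge $xy$ forbids many triples $\{x, y, v\}$ in a way that has no analog for pairs, so that the gain from extra $A$--$B$ edges added by the switch outweighs the loss from forbidding some triples.
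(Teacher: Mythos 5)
The survey does not prove this theorem; it only quotes it from Gan, Loh, and Sudakov \cite{GLS15} (with the bipartite case due to Alexander, Cutler, and Mink \cite{ACM12}), so your proposal must stand on its own, and as written it has a genuine gap at its center. The ``$2$-switch'' that is supposed to make $A$ independent is never specified, and no candidate is analyzed. Note that deleting an $A$-internal edge $xy$ drops both $x$ and $y$ (which have degree exactly $\delta$) below the degree floor, so any repair must add at least two new cut edges; each added edge destroys independent $t$-sets, and showing that the gain from deleting $xy$ outweighs these losses is essentially the content of the theorem, not a routine local exchange. Your own remark about $t=2$ makes this precise: nothing in the described switch distinguishes $t\ge 3$ from $t=2$, yet a switch that never decreases $i_t$ would equally prove the false $t=2$ statement, so the mechanism by which $t\ge 3$ enters is exactly what is missing. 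There is also a bookkeeping problem: the switch adds edges, so edge-minimality (and with it the conclusions of your first step, that $B$ is independent and $A$ is $\delta$-regular) need not survive the iteration, and the claimed ``$\delta$-regular-on-$A$ bipartite'' structure is not actually maintained.

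Even granting the bipartite reduction, the endgame as sketched does not close. Using only $|N(S_A)|\ge\delta$ for nonempty $S_A$ in your decomposition gives, writing $|B|=\delta+s$ and $|A|=n-\delta-s$, the bound $i_t(G)\le\binom{|A|}{t}+\sum_{j=1}^{t-1}\binom{|A|}{t-j}\binom{s}{j}+\binom{\delta+s}{t}=\binom{n-\delta}{t}-\binom{s}{t}+\binom{\delta+s}{t}$ by Vandermonde, and this is at least $\binom{n-\delta}{t}+\binom{\delta}{t}$ for every $s\ge0$ because $\binom{\delta+s}{t}\ge\binom{\delta}{t}+\binom{s}{t}$; so the estimate can never rule out $|B|>\delta$ or pin down the complete bipartite graph. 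A Kruskal--Katona-type improvement of the pointwise bound on $|N(S_A)|$ is not available either, since $|N(S_A)|=\delta$ holds for all $S_A$ in the extremal graph $K_{\delta,n-\delta}$ itself; any real argument must trade off ``many $S_A$ with small neighborhood'' against structural rigidity, which is precisely the nontrivial content of the bipartite theorem of \cite{ACM12}. In short, both the switching step and the final bipartite step currently defer the entire difficulty, and the published proof in \cite{GLS15} proceeds by a global argument rather than the local modifications you describe.
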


Note that this claim is false for $t = 2$. 
See \cite{CK,CR11,CR14jgt,Eng15} for discussions on the analogous problem of maximizing the number of homomorphisms into a fixed $H$.

\subsection{Matchings}

Let $m(G)$ denote the number of matchings in a graph $G$, $m_t(G)$ the number of matchings with $t$ edges in $G$, and $pm(G) := m_{v(G)/2}(G)$ the number of perfect matchings in $G$.

The following upper bounds on $m(G)$ and $pm(G)$ have a curious semblance to Theorems~\ref{thm:kahn} and \ref{thm:zhao}. The quantity $m(G)$ for matchings can be viewed as analogous to $i(G)$ for independent sets.

For the number of perfect matchings, the bipartite case was conjectured in 1963 by Minc~\cite{Minc63} and proved by Br\`egman~\cite{Bre73} a decade later. Many different proofs have been given since then. The non-bipartite extension is due to Kahn and Lov\'asz (unpublished). See \cite{Gal}, which includes a statement allowing irregular $G$.

\begin{theorem}\label{thm:pm}
  For any $d$-regular graph $G$,
  \[
  pm(G)^{1/v(G)} \le pm(K_{d,d})^{1/(2d)} = (d!)^{1/(2d)}.
  \]
\end{theorem}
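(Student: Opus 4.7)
The plan is to reduce the general $d$-regular case to the bipartite case via the bipartite double cover $G \x K_2$, in close analogy with the strategy of Section~\ref{sec:swap}. The bipartite case itself is Br\'egman's theorem (resolving Minc's conjecture); I would prove it via Radhakrishnan's entropy argument. Let $G$ be $d$-regular bipartite with parts $A, B$, let $M$ be a uniformly random perfect matching, and let $\pi$ be a uniform random ordering of $A$. Revealing $M(a)$ for $a \in A$ in the order of $\pi$, the chain rule of entropy yields
\[
\log pm(G) = H(M) \le \sum_{a \in A} \EE_\pi \log N(a,\pi,M),
\]
where $N(a,\pi,M)$ is the number of neighbors of $a$ still unmatched when $a$ is processed. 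For fixed $M$ and $a$, the $d$ vertices $\{M^{-1}(b) : b \in N(a)\} \subseteq A$ (one of which is $a$ itself) have uniform relative order under random $\pi$, so if $a$ occupies relative position $j$ then $N(a,\pi,M) = d - j + 1$; averaging gives $\EE_\pi \log N(a,\pi,M) = \frac{1}{d}\log d!$. Summing over the $v(G)/2$ vertices of $A$ gives $\log pm(G) \le \frac{v(G)}{2d}\log d!$, the bipartite case.

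Next I would establish the matching analog of Lemma~\ref{lem:indep-bip}: $pm(G)^2 \le pm(G \x K_2)$ for every graph $G$. Since $pm(G)^2 = pm(2G)$, where $2G$ denotes the disjoint union of two copies of $G$, it suffices to inject perfect matchings of $2G$ into perfect matchings of $G \x K_2$. Both graphs share vertex set $V(G) \x \{0,1\}$: the edges of $2G$ are the $u_i v_i$, while the edges of $G \x K_2$ are the $u_iv_{1-i}$ (for $uv \in E(G)$, $i \in \{0,1\}$). Given a perfect matching $M$ of $2G$, write $M = M_0 \cup M_1$ where $M_i$ is the perfect matching of $G$ lying in the $i$th copy, and let $A \subseteq V(G)$ be the lexicographically smallest subset containing exactly one endpoint of every edge in the multigraph $M_0 \cup M_1$. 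Such $A$ exists because $M_0 \cup M_1$ is $2$-regular, and hence decomposes into even cycles (treating doubled edges as $2$-cycles), each of which is $2$-colorable. Define $\phi(M)$ by swapping $0 \leftrightarrow 1$ on vertices in $A$; by construction each edge $u_iv_i$ of $M$ has exactly one of $u, v$ in $A$ and therefore becomes $u_{1-i}v_i$ or $u_iv_{1-i}$, an edge of $G \x K_2$. Injectivity follows because the projected edge multiset $M_0 \cup M_1 \subseteq E(G)$ can be read off from $\phi(M)$ alone (it is the multiset of underlying edges $uv$ such that $\phi(M)$ contains an edge on $\{u,v\}\x\{0,1\}$), so the canonical $A$ can be recomputed from $\phi(M)$ and the swap reversed to recover $M$.

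Combining, since $G \x K_2$ is $d$-regular bipartite with $2v(G)$ vertices, the bipartite case yields $pm(G \x K_2) \le (d!)^{v(G)/d}$, giving
\[
pm(G)^{1/v(G)} \le pm(G \x K_2)^{1/(2v(G))} \le (d!)^{1/(2d)} = pm(K_{d,d})^{1/(2d)}.
\]
The main obstacle is the inequality $pm(G)^2 \le pm(G \x K_2)$. Unlike Zhao's setting, where the ``bad'' edges of $S \in I(2G)$ automatically form a bipartite subgraph of $G$, here \emph{every} edge of the perfect matching is bad, so the swapping set $A$ must simultaneously serve as a transversal of both $M_0$ and $M_1$; and the canonical choice of $A$ must depend only on the projected edge set $M_0 \cup M_1$ (not on the $0/1$ labeling) so that the injection $\phi$ can be inverted from its image.
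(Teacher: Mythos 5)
Your argument is correct, but it is worth noting that the paper itself does not prove Theorem~\ref{thm:pm} at all: being a survey, it cites Br\`egman~\cite{Bre73} for the bipartite case, attributes the general case to Kahn and Lov\'asz, and points to the occupancy-method proof in \cite{DJPR1}. What you give is a complete, self-contained route that is genuinely different from the occupancy-fraction proof featured in the paper, and it is essentially the Alon--Friedland reduction: your key lemma $pm(G)^2 \le pm(G \times K_2)$ is the perfect-matching analogue of Lemma~\ref{lem:indep-bip}, and your injection is sound --- the multigraph $M_0 \cup M_1$ is $2$-regular, so a transversal $A$ meeting each (multi-)edge in exactly one endpoint exists; swapping the labels on $A$ is a vertex bijection, so it sends the perfect matching $M$ of $2G$ to a perfect matching whose edges all cross (since each matching edge has exactly one endpoint in $A$), hence a perfect matching of $G \times K_2$; and since the swap preserves underlying edges, the projected multiset, and hence the canonical $A$, is recoverable from the image, giving injectivity. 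Your identification of the real difference from Section~\ref{sec:swap} --- that here every matching edge is ``bad,'' so $A$ must be a transversal of $M_0 \cup M_1$ chosen canonically from the projection alone --- is exactly the right point. The entropy step is the standard Radhakrishnan proof of Br\`egman's theorem (simplified by regularity); the only cosmetic issue is that the displayed bound should be read as $H(M) \le \EE_\pi \sum_{a} \EE_M[\log N(a,\pi,M)]$, fixing $\pi$ first, bounding each conditional entropy by the logarithm of the conditional support size (which is a function of the revealed values), and only then averaging over $\pi$ and using that for fixed $M$ the rank of $a$ among $\{M^{-1}(b) : b \in N(a)\}$ is uniform. Compared with the paper's cited proofs, your route buys an elementary combinatorial reduction to the bipartite case in the same spirit as the paper's Section~\ref{sec:swap}, whereas the occupancy method of \cite{DJPR1} buys more (a weighted statement, Theorem~\ref{thm:matching-weighted}, and bounds on $m(G)$) at the cost of a longer analysis.
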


The occupancy method was used in \cite{DJPR1} to give an alternative proof of Theorem~\ref{thm:pm}, along with a new upper bound on $m(G)$, as well as a weighted extension analogous to Theorem~\ref{thm:indep-poly}. Define the \emph{matching polynomial}
\[
M_G(\lambda) := \sum_{M \in \mathcal{M}(G)} \lambda^{|M|}
\]
where $\mathcal{M}(G)$ is the set of matchings in $G$, and $|M|$ is the number of edges in the matching $M$. 

\begin{theorem}[\cite{DJPR1}] \label{thm:matching-weighted}
 	For any $d$-regular graph $G$ and $\lambda \ge 0$,
 	\[
 	M_G(\lambda)^{1/v(G)} \le M_{K_{d,d}}(\lambda)^{1/(2d)}.
 	\]
 	In particular, setting $\lambda = 1$ yields
 	\[
 	m(G)^{1/v(G)} \le m(K_{d,d})^{1/(2d)}.
 	\]
\end{theorem}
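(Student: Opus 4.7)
The plan is to adapt the occupancy fraction method of Davies, Jenssen, Perkins, and Roberts \cite{DJPR1}, paralleling the proofs of Theorem~\ref{thm:occupancy} in Section~\ref{sec:occup}. Let $\mu_\lambda$ denote the monomer--dimer distribution on matchings of $G$, where $\mu_\lambda(M) = \lambda^{|M|}/M_G(\lambda)$. Define the matching occupancy fraction
\[
\alpha_G(\lambda) := \frac{2\lambda M_G'(\lambda)}{v(G)\, M_G(\lambda)} = \frac{2}{v(G)} \EE_{\mu_\lambda}[|M|],
\]
which equals the probability that a uniform random vertex $v \in V(G)$ is covered by $M \sim \mu_\lambda$.

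First, I would reduce the theorem to the pointwise bound $\alpha_G(\lambda) \le \alpha_{K_{d,d}}(\lambda)$ for all $\lambda \ge 0$. Since $M_G(0) = 1$ and
\[
\frac{d}{d\lambda} \log M_G(\lambda)^{1/v(G)} = \frac{\alpha_G(\lambda)}{2\lambda},
\]
integrating from $0$ to $\lambda$ upgrades the pointwise bound on $\alpha$ to the desired bound on $M_G(\lambda)^{1/v(G)}$.

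To prove the pointwise inequality, I would pick a uniform random vertex $v \in V(G)$ together with $M \sim \mu_\lambda$ and study the local structure of $M$ at $v$. Let $M^{-v}$ denote $M$ with any edge incident to $v$ removed, and let $Z_v$ be the number of neighbors of $v$ not covered by $M^{-v}$. Conditioning on $M^{-v}$, the vertex $v$ is matched in $M$ with probability exactly $\lambda Z_v/(1+\lambda Z_v)$, so
\[
\alpha_G(\lambda) = \EE\!\left[\frac{\lambda Z_v}{1 + \lambda Z_v}\right].
\]
Setting $p_k := \Pr[Z_v = k]$ for $0 \le k \le d$, I would extract linear constraints on the $p_k$ from $d$-regularity. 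One clean constraint comes from double-counting: a short swap-of-summation argument yields
\[
\EE[Z_v] = d - (d-1)\,\alpha_G(\lambda),
\]
since, for fixed $M$, a neighbor $w$ of a uniform random $v$ is itself a uniform random vertex, and $w$ is covered by $M^{-v}$ iff $w$ is covered by $M$ via some edge other than $vw$. Additional inequalities among the $p_k$, in the spirit of \eqref{eq:occ-descend}, come from local swap moves that add or delete a single edge incident to $v$ from $M$, weighted by $\lambda$, to compare adjacent values of $Z_v$.

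The resulting linear program on $(p_0,\ldots,p_d)$ should be maximized by the distribution realized when $G = K_{d,d}$, whose value is exactly $\alpha_{K_{d,d}}(\lambda)$ and can be computed in closed form from $M_{K_{d,d}}(\lambda) = \sum_{k=0}^d \binom{d}{k}^2 k!\, \lambda^k$. The hard part will be designing swap moves tight at $K_{d,d}$: unlike independent sets, where removing one element of $I$ is a clean local operation, deleting or inserting an edge of $M$ simultaneously affects the matched status of two vertices, so the swap inequalities at $v$ must track both $v$ and its (un)matched neighbors together, and a naive Jensen step on $\lambda Z_v/(1+\lambda Z_v)$ using only $\EE[Z_v]$ can be checked to be strictly looser than $\alpha_{K_{d,d}}(\lambda)$ already at $d=2$.
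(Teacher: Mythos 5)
Your setup is sound and matches the route this survey points to: the paper does not prove Theorem~\ref{thm:matching-weighted} itself but refers to the edge occupancy fraction result of \cite{DJPR1} (the analogue of Theorem~\ref{thm:occupancy} for the monomer--dimer model) together with the same integration step you describe. Your reduction to the pointwise bound $\alpha_G(\lambda)\le\alpha_{K_{d,d}}(\lambda)$ is correct, the conditional probability $\lambda Z_v/(1+\lambda Z_v)$ given $M^{-v}$ is the right spatial Markov computation, and your double-counting identity $\EE[Z_v]=d-(d-1)\alpha_G(\lambda)$ checks out. Your observation that a bare Jensen step on these two facts is already too weak at $d=2$ is also correct (at $\lambda=1$ it gives $2-\sqrt{2}\approx 0.586$ versus $\alpha_{K_{2,2}}(1)=4/7\approx 0.571$).

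The genuine gap is that the proof stops exactly where the theorem begins. Everything you have written down rigorously (the identity $\alpha_G=\EE[\lambda Z_v/(1+\lambda Z_v)]$, the mean constraint on $Z_v$, and $0\le Z_v\le d$) is, by your own admission, insufficient to conclude $\alpha_G(\lambda)\le\alpha_{K_{d,d}}(\lambda)$; the additional constraints ``in the spirit of \eqref{eq:occ-descend}'' coming from local edge insertion/deletion moves are only gestured at, not stated, not proved valid for all $d$-regular $G$, and not shown to make $K_{d,d}$ the optimum of the resulting linear program. For matchings this is precisely the delicate part: as you note, toggling an edge at $v$ changes the status of two vertices at once, so the local view must be refined beyond the single statistic $Z_v$ (this is what \cite{DJPR1} does, and why the survey defers to \cite[Theorem~3]{DJPR1} rather than reproducing the argument). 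Without an explicit, verified family of constraints together with an argument that the LP maximum equals $\alpha_{K_{d,d}}(\lambda)$ (including the tightness/uniqueness check analogous to the end of the second proof of Theorem~\ref{thm:occupancy}), what you have is a correct framing plus a correct soft reduction, but no proof of the inequality that constitutes the theorem.
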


In fact, an edge occupancy fraction result analogous to Theorem~\ref{thm:occupancy} holds. We refer to \cite[Theorem~3]{DJPR1} for the exact statement.
Note that setting $\lambda \to \infty$ in Theorem~\ref{thm:matching-weighted} recovers Theorem~\ref{thm:pm}, since the dominant term in $M_G(\lambda)$ is $pm(G)\lambda^{v(G)/2}$. 

The following matching analog of Conjecture~\ref{conj:ind-fixed-size} remains open. See \cite{CGT09,DJPR1} for discussion.

\begin{conjecture}[\cite{FKM08}] \label{conj:mat-fixed-size}
  If $G$ is an $2ad$-vertex $d$-regular graph and $t \ge 0$, then $m_t(G) \le m_t(a K_{d,d})$.
\end{conjecture}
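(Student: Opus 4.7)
The plan is to strengthen the occupancy-fraction method of Davies, Jenssen, Perkins and Roberts that yields Theorem~\ref{thm:matching-weighted} so that it tracks each coefficient $m_t(G)$ separately, rather than only the generating function. Theorem~\ref{thm:matching-weighted} already furnishes $M_G(\lambda)\le M_{aK_{d,d}}(\lambda)$ pointwise for every $\lambda\ge 0$ when $v(G)=2ad$, but pointwise domination of polynomials with positive coefficients does not generally entail coefficient-wise domination, so a new ingredient is required. The easy extremes are free: $m_0(G)=m_0(aK_{d,d})=1$, $m_1(G)=e(G)=ad^2=m_1(aK_{d,d})$ since $G$ is $d$-regular on $2ad$ vertices, and $m_{ad}(G)\le m_{ad}(aK_{d,d})$ by the Kahn--Lov\'asz theorem (Theorem~\ref{thm:pm}); a first step is to see whether the Heilmann--Lieb real-rootedness of $M_G$, which yields the log-concavity $m_t(G)^2\ge m_{t-1}(G)m_{t+1}(G)$ and more generally Newton's inequalities, combined with the pointwise bound, already pins down the intermediate $t$.

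For the values of $t$ that remain, I would adapt the local linear-programming approach used in the second proof of Theorem~\ref{thm:occupancy} to the fixed-size ensemble. Sample a $t$-matching $M$ of $G$ uniformly at random and an edge $e\in E(G)$ uniformly at random, and let $p_c$ denote the probability that a given local configuration $c$ (from a finite list depending only on $d$) appears around $e$ in $M$. The $p_c$ must satisfy a sum-to-one constraint, the linearity-of-expectation constraint $\Pr(e\in M)=2t/(v(G)d)$, and swap-style relations comparing configurations that differ by inserting or deleting a single matching edge, analogous to the derivation of~\eqref{eq:occ-descend}. The goal is to show that, among LP-feasible configuration distributions, the profile realized by $G=aK_{d,d}$ achieves the extremal value of a suitable observable whose expectation controls $m_t(G)$.

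The main obstacle is that conditioning on $|M|=t$ destroys the product-over-edges structure that makes the $\lambda$-weighted argument so clean: under the weighted measure an extra matching edge appears with an independent multiplicative factor $\lambda$, giving the crisp swap ratios that drive~\eqref{eq:occ-descend}, but under the uniform $t$-matching measure the analogous ratios depend on $t$ and on the global structure of $M$, weakening the swap constraints considerably. A fallback route, where I expect most of the hard work to lie, is to choose $\lambda_t^\ast$ with $\mathbb{E}_{\lambda_t^\ast}[|M|]=t$, use the identity $m_t(G)=\Pr_{\lambda_t^\ast}^{G}(|M|=t)\cdot M_G(\lambda_t^\ast)/(\lambda_t^\ast)^t$ together with $M_G(\lambda_t^\ast)\le M_{aK_{d,d}}(\lambda_t^\ast)$, and compare the two point probabilities via a local central limit theorem for $|M|$ in the monomer--dimer model. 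The hard part is then obtaining a graph-independent two-sided bound of matching order on $\operatorname{Var}_{\lambda_t^\ast}(|M|)$; any such variance bound would close the gap and resolve the conjecture.
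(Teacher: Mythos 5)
First, a point of orientation: the statement you are trying to prove is stated in the paper as an open conjecture (attributed to \cite{FKM08}), and the paper explicitly says it \emph{remains open}, pointing to \cite{CGT09,DJPR1} only for discussion and partial results. So there is no proof in the paper to compare against, and your text, as written, is a research programme rather than a proof: each of its three strands ends at an unresolved step that you yourself flag.

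The gaps are real and not merely presentational. (i) The hope that Heilmann--Lieb real-rootedness and Newton's inequalities, combined with the pointwise bound $M_G(\lambda)\le M_{aK_{d,d}}(\lambda)$ from Theorem~\ref{thm:matching-weighted} and the boundary values $t\in\{0,1,ad\}$, might ``pin down'' the intermediate coefficients is not an argument: pointwise domination of two polynomials with positive, log-concave coefficient sequences does not imply coefficient-wise domination, and nothing in the log-concavity of either sequence transfers information from the extreme coefficients to, say, $t\approx ad/2$, which is exactly where the conjecture lives. (ii) The fixed-size LP is the known obstruction, not a route around it: once you condition on $|M|=t$ the deletion/insertion swap ratios are no longer local (they involve the number of $t$- versus $(t-1)$-matchings globally), so you do not get clean constraints analogous to~\eqref{eq:occ-descend}; this is precisely why \cite{DJPR1} obtain only partial bounds for the fixed-size independent-set analogue (Conjecture~\ref{conj:ind-fixed-size}) rather than the exact statement. (iii) The local-CLT fallback cannot, even in principle, yield the conjecture as stated: the identity $m_t(G)=\Pr_\lambda^G(|M|=t)\,M_G(\lambda)/\lambda^t$ holds for any $\lambda$, but to conclude $m_t(G)\le m_t(aK_{d,d})$ you must compare the two point probabilities at the \emph{same} $\lambda$, and local central limit estimates control these only up to $1+o(1)$ (or explicit but nonzero) error factors depending on variances and mean shifts. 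Since the conjecture is an exact inequality for every finite $a$, $d$, $t$ --- including small cases where no asymptotic regime is available --- an approximate transfer of this kind can at best give $m_t(G)\le (1+\epsilon)\,m_t(aK_{d,d})$, never the clean inequality. Even granting your hoped-for two-sided variance bound, the sign of the discrepancy between $\Pr_\lambda^G(|M|=t)$ and $\Pr_\lambda^{aK_{d,d}}(|M|=t)$ is not controlled, so the final step would still fail. The easy observations you do establish ($m_0$, $m_1$, and $m_{ad}$ via Theorem~\ref{thm:pm}, and in fact $m_2$ is equal for all $d$-regular graphs of the same order) are correct but far from the heart of the problem.
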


The infimum of $pm(G)^{1/v(G)}$ for $d$-regular bipartite graphs $G$ is well understood. The infimum corresponds to random bipartite graphs $G$.

\begin{theorem}[Voorhoeve~\cite{Voo79} for $d=3$ and Schrijver~\cite{Sch98} for all $d$]
  If $G$ is a $d$-regular bipartite graph on $2n$ vertices, then
\[
pm(G)^{1/n} \ge \frac{(d-1)^{d-1}}{d^{d-2}}.
\]
\end{theorem}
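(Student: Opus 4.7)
The plan is to reduce the statement to a lower bound on a matrix permanent and attack it via real stable polynomials. Fix a bipartition $V(G) = U \cup W$ with $|U| = |W| = n$, and let $A = (a_{ij})$ be the $n \times n$ biadjacency matrix, so that $pm(G) = \mathrm{perm}(A)$; note $A$ is a $\{0,1\}$-matrix whose row and column sums all equal $d$. Setting $\theta(d) := (d-1)^{d-1}/d^{d-2}$, the goal becomes $\mathrm{perm}(A) \ge \theta(d)^n$.

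I would work with the \emph{permanent polynomial}
\[
p_A(x_1, \dots, x_n) := \prod_{i=1}^n \sum_{j=1}^n a_{ij}\, x_j,
\]
which is homogeneous of degree $n$ with nonnegative coefficients, has degree at most $d$ in each variable, and whose coefficient of $x_1 \cdots x_n$ equals $\mathrm{perm}(A)$. As a product of linear forms with nonnegative coefficients, $p_A$ is real stable. Define the capacity
\[
\mathrm{cap}(p_A) := \inf_{x_1, \dots, x_n > 0} \frac{p_A(x)}{x_1 \cdots x_n}.
\]
Since $A/d$ is doubly stochastic, weighted AM--GM applied factor by factor yields $p_A(x) \ge d^n x_1 \cdots x_n$, with equality at $x = (1, \dots, 1)$, so $\mathrm{cap}(p_A) = d^n$ exactly.

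The crux is a capacity-to-coefficient inequality in its bounded-degree form (due to Gurvits, sharpening the permanental Van der Waerden bound): for any real stable polynomial $p$ of degree $n$ in $n$ variables with nonnegative coefficients and degree at most $d$ in each variable,
\[
[x_1 \cdots x_n]\, p \; \ge \; \left(\tfrac{d-1}{d}\right)^{(d-1)n}\,\mathrm{cap}(p).
\]
Applied to $p_A$, this yields $\mathrm{perm}(A) \ge ((d-1)/d)^{(d-1)n} \cdot d^n = \theta(d)^n$, as desired.

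The main obstacle is establishing this capacity inequality. The natural strategy is induction on $n$ via the ``differentiate-and-evaluate'' operator $\Pi \colon p \mapsto (\partial_{x_n} p)|_{x_n = 0}$, which preserves the coefficient $[x_1 \cdots x_n]$, nonnegativity of coefficients, the per-variable degree bound, and real stability (by Hurwitz / Borcea--Br\"and\'en). It therefore suffices to prove the single-step loss bound $\mathrm{cap}(\Pi p) \ge ((d-1)/d)^{d-1}\, \mathrm{cap}(p)$, which in turn reduces to a univariate extremal inequality: among real-rooted polynomials $q(t)$ of degree at most $d$ with nonnegative coefficients, one must show $q'(0) \ge ((d-1)/d)^{d-1} \inf_{t>0} q(t)/t$, with equality at $q(t) = (1+t)^d$ (where both sides equal $d$). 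Verifying this sharp univariate bound and then transporting it back to the multivariate setting using real stability is the technical heart of the argument; the constant $(d-1)^{d-1}/d^{d-1}$ appearing per step is precisely what makes the product telescope to $\theta(d)^n$ rather than the weaker $n!/n^n \cdot d^n$ bound one gets from Van der Waerden's inequality alone.
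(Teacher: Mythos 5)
The survey does not actually prove this theorem: it is quoted from Voorhoeve ($d=3$) and Schrijver (general $d$), with a pointer to the Laurent--Schrijver exposition \cite{LS10} of Gurvits's proof. Your proposal is precisely that capacity argument, so you have reconstructed the proof behind the cited exposition rather than Schrijver's original one. The skeleton is right: $pm(G)=\mathrm{perm}(A)=[x_1\cdots x_n]\,p_A$; the weighted AM--GM step giving $\mathrm{cap}(p_A)=d^n$ because $A/d$ is doubly stochastic (each variable indeed has degree exactly $d$ in $p_A$, the column sum); the fact that $\Pi\colon p\mapsto(\partial_{x_n}p)|_{x_n=0}$ preserves real stability, nonnegativity of coefficients, the per-variable degree bound, and the coefficient of $x_1\cdots x_n$; and the bookkeeping $\bigl(\tfrac{d-1}{d}\bigr)^{(d-1)n}d^n=\bigl(\tfrac{(d-1)^{d-1}}{d^{d-2}}\bigr)^n$, so the telescoping does land on the stated constant. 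The one place where your write-up is a blueprint rather than a proof is the single-step loss bound: the univariate inequality $q'(0)\ge\bigl(\tfrac{d-1}{d}\bigr)^{d-1}\inf_{t>0}q(t)/t$ for real-rooted $q$ with nonnegative coefficients and degree at most $d$ is exactly Gurvits's key lemma, and you state it (with the correct constant and the correct extremal example $(1+t)^d$) but do not establish it; writing $q(t)=c\prod_i(t+r_i)$ with $r_i\ge 0$ and optimizing is short but genuinely nontrivial, so either carry it out or cite Gurvits or \cite{LS10} explicitly at that point. For contrast, Schrijver's original proof in \cite{Sch98} is a much more intricate combinatorial induction, and Voorhoeve's $d=3$ argument in \cite{Voo79} is an elementary ad hoc induction; what your (Gurvits's) route buys is a short conceptual proof that simultaneously recovers the Van der Waerden permanent bound, at the cost of importing the machinery of real stable polynomials and capacity.
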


See \cite{LS10} for an exposition. The corresponding minimization problem for $m(G)$, and more generally for $m_t(G)$ and $M_G(\lambda)$, was solved by Gurvits~\cite{Gur} and extended by Csikv\'ari \cite{Csi}.

\section*{Acknowledgments}

I am grateful to Joe Gallian for the REU opportunity in 2009 where I began working on this problem (resulting in \cite{Zhao10}). I thank P\'eter Csikv\'ari, David Galvin, Joonkyung Lee, Will Perkins, and Prasad Tetali for carefully reading a draft of this paper and providing helpful comments. I also thank the anonymous reviewers for suggestions that improved the exposition of the paper.

\bibliographystyle{amsplain_mod2}
\bibliography{ref_ext_reg}

\end{document}